\numberwithin{equation}{section}
\newcommand{\N}{\mathbb{N}}
\newcommand{\Z}{\mathbb{Z}}
\newcommand{\Q}{\mathbb{Q}}
\newcommand{\R}{\mathbb{R}}
\newcommand{\C}{\mathbb{C}}
\newcommand{\excise}[1]{}
\DeclareMathOperator{\coker}{coker}
\DeclareMathOperator{\colspace}{Colspace}
\DeclareMathOperator{\rank}{rank}
\DeclareMathOperator{\rowspace}{Rowspace}
\DeclareMathOperator{\im}{Im}
\DeclareMathOperator{\lift}{lift}
\DeclareMathOperator{\Ten}{Ten}
\DeclareMathOperator{\Cut}{Cut}
\DeclareMathOperator{\Hom}{Hom}
\DeclareMathOperator{\Flow}{Flow}
\DeclareMathOperator{\zero}{zero}
\DeclareMathOperator{\sgn}{sgn}
\DeclareMathOperator{\supp}{supp}
\newcommand{\st}{\colon} % "such that"
\newcommand{\x}{\times}
\newcommand{\isom}{\cong}
\newcommand{\0}{\emptyset}
\newcommand{\sm}{\setminus}
\newcommand{\BB}{\mathcal{B}} % mathcal for hyperplane arrangements
\newcommand{\HH}{\mathcal{H}}
\newcommand{\MM}{\mathcal{M}} % ... and matroids
\renewcommand{\SS}{\mathcal{S}}
\newcommand{\tor}{{\mathbf T}}
\newcommand{\BBB}{B} % use \BBB for the sets of feasible right-hand sides in the modular tensions section. \BB is the Boolean arrangement!
\renewcommand{\H}{\tilde{H}} % homology
\newcommand{\Zz}{\Z}
\newcommand{\Nn}{\N}
\newcommand{\Qq}{\Q}
\newcommand{\Rr}{\R}
\newcommand{\bd}{\partial}
\newcommand{\cbd}{\partial^*}
\newcommand{\defterm}[1]{\emph{#1}}
\newcommand{\ori}{\varepsilon}
\newcommand{\includefigure}[3]{
  \begin{center}
  \resizebox{#1}{#2}{\includegraphics{#3}}
  \end{center}}
\renewcommand\emptyset{\varnothing}
\newcommand\calL{{\mathcal L}}
\newtheorem{theorem}{Theorem}[section]
\newtheorem{proposition}[theorem]{Proposition}
\newtheorem{lemma}[theorem]{Lemma}
\newtheorem{corollary}[theorem]{Corollary}
\theoremstyle{definition}
\newtheorem{definition}[theorem]{Definition}
\newtheorem{example}[theorem]{Example}
\newtheorem{remark}[theorem]{Remark}
\begin{document}

\author{Matthias Beck}
\address{Department of Mathematics, San Francisco State University}
\email{mattbeck@sfsu.edu}

\author{Felix Breuer}
\address{Department of Mathematics, San Francisco State University}
\email{felix@felixbreuer.net}

\author{Logan Godkin} 
\address{Department of Mathematics, University of Kansas}
\email{lgodkin@math.ku.edu}

\author{Jeremy L.\ Martin}
\address{Department of Mathematics, University of Kansas}
\email{jmartin@math.ku.edu}

\thanks{M.\ Beck was partially supported by the NSF through grant DMS-1162638.
F.\ Breuer was supported by the Deutsche Forschungsgemeinschaft (DFG), grant BR 4251/1-1.
J.L.\ Martin was partially supported by a Simons Foundation Collaboration Grant and NSA grant H98230-12-1-0274.}
\title{Enumerating Colorings, Tensions and Flows in Cell Complexes}
\keywords{graph, cell complex, chromatic polynomial, reciprocity, flows, tensions}
\subjclass[2000]{05A15, 05C15, 11P21, 52C35.}
% 05A15 Exact enumeration problems, generating functions
% 05C15 Coloring of graphs and hypergraphs
% 11P21 Lattice points in specified regions
% 52C35 Arrangements of points, flats, hyperplanes
\date{23 October 2013}

\begin{abstract}
We study quasipolynomials enumerating proper colorings, nowhere-zero
tensions, and nowhere-zero flows in an arbitrary CW-complex $X$,
generalizing the chromatic, tension and flow polynomials of a graph.
Our colorings, tensions and flows may be either
\emph{modular} (with values in $\mathbb{Z}/k\mathbb{Z}$ for some $k$)
or \emph{integral} (with values in $\{-k+1,\dots,k-1\}$). 
We obtain deletion-contraction recurrences and closed formulas
for the chromatic, tension and flow quasipolynomials, assuming certain
unimodularity conditions.  We use geometric methods, specifically
Ehrhart theory and inside-out polytopes, to obtain
reciprocity theorems for all of the aforementioned quasipolynomials, giving combinatorial interpretations of their values at negative integers as well as formulas for the numbers of acyclic and totally cyclic
orientations of $X$.
\end{abstract}

\maketitle
%%%%%%%%%%%%%%%%%%%%%%%%%%%%%%%%%%%%%%%%%%%%%%%%%%%%%%%%%%%%%%%%
\section{Introduction}

This article is about generalizing the enumeration of
colorings, flows, cuts and tensions from graphs to cell complexes.
We begin with a review of colorings of graphs.

Let $G=(V,E)$ be a graph and let $k$ be a positive integer.  A
\emph{proper $k$-coloring} of~$G$ is a function $f:V\to K$, where $K$
is a ``palette'' of size~$k$ and $f(v)\neq f(w)$ whenever $vw$ is an
edge of~$G$.  It is well known that the number $\chi_G(k)$ of proper
$k$-colorings is a polynomial in~$k$ (the \emph{chromatic polynomial
  of $G$}).  Remarkably, the numbers $\chi_G(-k)$ have combinatorial
interpretations as well, as discovered by Stanley \cite{Stanley}; the
best known of these is that $|\chi_G(-1)|$ is the number of acyclic
orientations of~$G$.  This phenomenon is an instance of
\emph{combinatorial reciprocity} and is closely related to, e.g.,
Ehrhart's enumeration of lattice points in polytopes \cite{ehrhartpolynomial}
and Zaslavsky's theorems on counting regions of hyperplane
arrangements~\cite{Zaslavsky}.

Fix an orientation on the edges of $G$ and let $\bd$ be the signed
incidence matrix of $G$; that is, the rows and columns of~$\bd$ are
indexed by vertices and edges respectively, and the $(v,e)$ entry is
\[\begin{cases}
1 & \text{ if vertex $v$ is the head of edge $e$},\\
-1 & \text{ if vertex $v$ is the tail of edge $e$},\\
0 & \text{ if $v$ is not an endpoint of~$e$}.
\end{cases}\]
If we regard a coloring $f$ as a row vector $c=[f(v)]_{v\in V}$,
then properness says precisely that $c \, \bd$
is nowhere zero, since for each edge $vw$, the corresponding
entry of $c \, \bd$ is $f(v)-f(w)$.  Here the palette may be regarded either
as the integers $1,\dots,k$ or as the elements of $\Z/k\Z=\Z_k$.
The number of $k$-colorings may then be computed by linear algebra and
inclusion--exclusion, yielding Whitney's formula for the chromatic
polynomial (see, e.g.,~\cite[\S5.3]{West}).

The \emph{cut} and \emph{flow spaces} of~$G$ are respectively the row space and kernel
of its boundary matrix $\bd$, regarded as a map of modules over a ring $A$
(typically either $\Zz$, $\R$, or $\Z_k$).
Cuts and flows arise in algebraic graph theory
and are connected to the critical 
group and the chip-firing game: see, e.g., \cite{BDN} or \cite[Chapter~14]{AlgGraph}.
Over~$\Zz$, we consider in addition the space
of \emph{tensions}, or integer vectors of whom some multiple is
in the cut space.  (Over a field or a finite ring, all tensions are cuts.)
For a generalization of cuts and flows to cell complexes,
see \cite{DKM-CutFlow}.

If we regard the graph $G$ as a one-dimensional simplicial complex,
the matrix $\bd$ is just the boundary map from 1-chains to 0-chains.
Accordingly, we can replace the graph by an arbitrary $d$-dimensional
CW-complex~$X$ and define cellular colorings, flows, tensions and cuts
in terms of its top cellular boundary
map $\bd:C_d(X;\Zz)\to C_{d-1}(X;\Zz)$.

In Section~\ref{modular-section}, we study the functions
\begin{align*}
\chi^*_X(k) &~=~ \text{number of proper $\Zz_k$-colorings of $X$},\\
\tau^*_X(k) &~=~ \text{number of nowhere-zero $\Zz_k$-tensions of $X$},\\
\varphi^*_X(k) &~=~ \text{number of nowhere-zero $\Zz_k$-flows of $X$}.
\end{align*}
When $X$ is a graph, these are all polynomials in $k$ (for the
tension and flow polynomials, see \cite{Tutte}); in fact, they
are specializations of the Tutte polynomial of $X$.  For arbitrary cell
complexes, we show that the modular counting functions are always
quasipolynomials, and find sufficient
conditions on $X$ for them to be genuine polynomials
(building on the work of the first author and Y.~Kemper
\cite{yvonne}).  We describe two
avenues toward such results, using colorings as an example (the other
arguments are similar).  First, if $\bd$ contains a unit entry, then
pivoting the matrix $\bd$ there corresponds to the topological
operations of deleting a facet or deformation--retracting it onto a
neighboring ridge, and gives rise to a deletion--contraction recurrence
for modular colorings.  Provided that this ``shrinking'' process can
be iterated, it follows by induction that $\chi^*_X(k)$ is a
polynomial.  The second approach is a linear-algebraic
inclusion--exclusion argument, which produces a closed quasipolynomial
formula for $\chi^*_X(k)$ (Theorem~\ref{coloring-inc-exc}), which is
easily seen to be polynomial if every column-selected submatrix of
$\bd$ has no nontrivial invariant factors (a weaker condition than
total unimodularity).

In Section~\ref{integral-section}, we study the integral
counting functions
\begin{align*}
\chi_X(k) &~=~ \text{number of proper $K$-colorings of $X$},\\
\tau_X(k) &~=~ \text{number of nowhere-zero $K$-tensions of $X$},\\
\varphi_X(k) &~=~ \text{number of nowhere-zero $K$-flows of $X$},
\end{align*}
where $K$ is the palette $\{-k+1,-k+2,\dots,k-1\}\subseteq\Zz$. When $X$ is a graph, these are all polynomials in $k$ as was shown by Kochol~\cite{Kochol,KocholTension}. 
These can be regarded as Ehrhart functions, enumerating
lattice points in inside-out
polytopes~\cite{inout}.  While we cannot in general find
explicit combinatorial formulas for the integral counting functions,
we can interpret them combinatorially
in terms of acyclic and totally cyclic orientations of~$X$
(generalizing the graph-theoretic definitions), and obtain
reciprocity theorems: for instance,
$|\chi_X(-1)|$ is the number of acyclic orientations of~$X$.
Using these results, we obtain reciprocity theorems for the
modular counting functions in Section~\ref{modular-reciprocity-section},
by interpreting them as Ehrhart quasipolynomials of
certain modular analogues of inside-out polytopes.

The authors thank Hailong Dao, Yvonne Kemper and Jacob White
for numerous helpful suggestions.

%%%%%%%%%%%%%%%%%%%%%%%%%%%%%%%%%%%%%%%%%%%%%%%%%%%%%%%%%%%%%%%%
\section{Background}

The symbol $\N$ will denote the nonnegative integers.  For $n\in\N$, we set $[n]:=\{1,\dots,n\}$,
and for $a \le b\in\Z$, we set $[a,b]:=\{a,a+1,\dots,b-1,b\}$.

We start by defining colorings, tensions and flows in the general
context of CW-complexes.  We assume familiarity with the basic
machinery of CW-complexes, including simplicial and cellular homology.
Unless otherwise stated, we will follow the notation and terminology
of Hatcher~\cite{Hatcher}.

Throughout the paper, $X$ will denote a cell complex of dimension~$d$.
We assume that $X$ is~\defterm{pure}, i.e., every maximal cell has the
same dimension.  (We lose no generality by this assumption; it is
analogous to assuming that graphs are connected.)
The symbols $F=\{f_1,\dots,f_m\}$ and
$R=\{r_1,\dots,r_n\}$ will always denote the sets of $d$-cells and
$(d-1)$-cells of~$X$; these are called \defterm{facets} and
\defterm{ridges}, respectively.  We allow the possibility that $F=\0$
(even though, strictly speaking, the dimension of such a complex is
less than~$d$).
For an Abelian group $A$, the chain groups $C_d(X;A)$ and $C_{d-1}(X;A)$
can be identified with the free modules $A^F$ and $A^R$ respectively.
The boundary map $C_i(X)\to C_{i-1}(X)$ is denoted $\bd_{A,i}$ or $\bd^X_{A,i}$, and 
the coboundary map $C_{i-1}(X)\to C_i(X)$ is $\cbd_{A,i}$. 
We simplify this notation by dropping the superscript when the complex~$X$ is clear from context;
moreover, if $A$ or $i$ (or both) are not specified, the reader should assume by default that $A=\Z$ and $i=d$.
The map $\bd$ is represented by an $n\x m$ matrix with columns indexed by~$F$ and rows by~$R$, and
entries $\bd_{rf}$ given by the degrees of the cellular attaching maps.
The (reduced) homology groups are
\[
  \H_i(X;A)=\ker\bd^X_{A,i}/\im\bd_{A,i+1} \, .  
\]

The \emph{cellular matroid} of $X$ is the matroid $\MM(X)$ represented by the
columns of $\bd_{\Rr,d}$.  (For a general reference on matroids, see, e.g., \cite{Oxley}.)
In the case that $X$ is a graph, the cellular matroid is just
the usual graphic matroid; more generally, a set $J\subset F$ is independent in $\MM(X)$
if $\H_d(X_J;\Zz)=0$ (where $X_J$ is the CW-subcomplex of~$X$ generated by $J\cup R$).
It is often convenient to use matroid language when speaking of
facets or sets of facets of $X$.  For instance,
a \emph{loop} of $X$ is a facet whose boundary is zero, and a \emph{coloop}
is a facet whose boundary is linearly independent of the boundaries of the other
facets.  

\begin{definition} \label{defn-coloring}
An \defterm{$A$-coloring} of $X$ is a vector $c=(c_1,\dots,c_n)\in
A^R$.  The coloring~$c$ is \defterm{proper} if $c \, \bd$ contains no
zero entries.  An \defterm{integral $k$-coloring} of~$X$ is a
$\Z$-coloring~$c$ such that $c_i\in[-k+1,k-1]$ for every~$i$.
\end{definition}

\begin{definition} \label{defn-flow}
The \defterm{$A$-flow space} of $X$ is the $A$-module
  $$\Flow_A(X) := \ker\bd_A \subseteq A^F = \H_d(X;A) \, .$$
An \defterm{integral $k$-flow} is a $\Zz$-flow $(w_1,\dots,w_m)$ such that
$w_i\in[-k+1,k-1]$ for every~$i$. In the graph setting, an enumeration theory of $k$-flows was initiated
by Kochol \cite{Kochol}; see also 
\cite{MR2274083,chenstanley}.
\end{definition}

\begin{definition} \label{defn-orient}
An \defterm{orientation} of $X$ is a sign vector $\ori\in\{
\pm 1\}^F$ that labels each facet of $X$ with a positive or negative
sign.
\end{definition}

\begin{example} \label{running-example}
Let $X$ be the boundary surface of the square pyramid on vertex set $\{1,2,3,4,5\}$, with base $2345$ and side triangles $123,134,145,152$, as shown.
\includefigure{1.5in}{1in}{pyramid} % original is 3" x 2"
Orienting each facet clockwise from the outside, and each edge from smaller to greater endpoint, we obtain the cellular chain complex
\newcommand{\scs}{\scriptstyle}
\[
C_2 \xrightarrow[\bordermatrix{
 &\scs 123 &\scs 134 &\scs 145 &\scs 125 &\scs 2345\cr
\scs12 &\scs 1 &\scs 0 &\scs 0 &\scs 1 &\scs 0\cr
\scs13 &\scs-1 &\scs 1 &\scs 0 &\scs 0 &\scs 0\cr
\scs14 &\scs 0 &\scs-1 &\scs 1 &\scs 0 &\scs 0\cr
\scs15 &\scs 0 &\scs 0 &\scs-1 &\scs-1 &\scs 0\cr
\scs23 &\scs 1 &\scs 0 &\scs 0 &\scs 0 &\scs 1\cr
\scs25 &\scs 0 &\scs 0 &\scs 0 &\scs 1 &\scs-1\cr
\scs34 &\scs 0 &\scs 1 &\scs 0 &\scs 0 &\scs 1\cr
\scs45 &\scs 0 &\scs 0 &\scs 1 &\scs 0 &\scs 1}]{\bd_2}
C_1 \xrightarrow[\bordermatrix{
& \scs12 & \scs13 & \scs14 & \scs15 & \scs23 & \scs25 & \scs24 & \scs35 \cr
\scs1&\scs-1 &\scs-1 &\scs-1 &\scs-1 &\scs 0 &\scs 0 &\scs 0 &\scs 0\cr
\scs2&\scs1 &\scs 0 &\scs 0 &\scs 0 &\scs-1 &\scs-1 &\scs 0 &\scs 0\cr
\scs3&\scs0 &\scs 1 &\scs 0 &\scs 0 &\scs 1 &\scs 0 &\scs-1 &\scs-1\cr
\scs4&\scs0 &\scs 0 &\scs 1 &\scs 0 &\scs 0 &\scs 0 &\scs 1 &\scs 0\cr
\scs5&\scs0 &\scs 0 &\scs 0 &\scs 1 &\scs 0 &\scs 1 &\scs 0 &\scs 1}]{\bd_1}
C_0
\]
Note that $\partial_1$ is the signed incidence matrix of the graph that is the 1-skeleton of $X$. Only the top-dimensional boundary matrix $\partial_2$ is needed to define colorings, tensions, and flows of $X$.  For example, $c=[2,1,-3,-1,1,2,0,1]\in\Zz^8$ is a proper integral 4-coloring of $X$ since $c\partial_2=[2,4,-1,6,-1]$
is nowhere-zero; however, if we consider the entries of $c$ as elements of $\Zz_4$, then $c$ is not a proper $\Zz_4$-coloring since $c\partial_2\equiv[2,0,3,2,3]\mod 4$. The $\Zz$-flow space of $X$ is the kernel of $\bd_2$, the free $\Zz$-module of rank~1 generated by $[1,1,1,-1,-1]^T$,
and passing to $\Zz_4$ we see that $\Flow_{\Zz_4}(X)$ is generated over $\Zz_4$ by $[1,1,1,3,3]$ (in particular, $\Flow_{\Zz_4}(X)\isom\Zz_4$).

The cellular matroid of $X$ is uniform of rank~4 on ground set of size 5.  Linear-algebraically, every set of four columns of $\partial_2$ form a basis for its column space; topologically, no subcomplex generated by fewer than five facets (i.e., no proper subcomplex) has nonzero dimension-2 homology.
\end{example}

A flow $w\in \Flow_\Z(X)$ is \defterm{$\ori$-nonnegative} if
$\ori_i w_i \geq 0$ for every facet $f_i\in X$. The terms
\defterm{$\ori$-positive}, \defterm{$\ori$-negative}, etc., are
defined analogously. We write $\ori w$ to denote the vector obtained
by componentwise multiplication. A cell complex $X$ with given
orientation $\ori$ is \defterm{acyclic} if it has no nontrivial
$\ori$-nonnegative flows.  It is \defterm{totally cyclic} if for every
facet $f_i$, there exists a flow $w$ that is $\ori$-nonnegative and
$\ori_iw_i>0$.  An equivalent condition is that $X$ has a strictly
$\ori$-positive flow (since if $w^{(1)},\dots,w^{(m)}$ are
$\ori$-nonnegative flows with $\ori_i w^{(i)}_i>0$ for each $i$, then
$\sum_i \ori_i w^{(i)}$ is a strictly $\ori$-positive flow).

Let~$A^F$ be equipped with the standard inner product
$(a_f)_{f\in F}\cdot(b_f)_{f\in F}=\sum_{f\in F}a_fb_f$.  
For every $\Zz$-submodule $B\subseteq
A^F$, define $B^\perp=\{a\in A^F\st a\cdot b=0$ for every $b\in B\}$.

\begin{definition}\label{defn-cut-ten}
The \defterm{$A$-cut space} and \defterm{$A$-tension space} are
\begin{align*}
\Cut_A(X) &~=~ \rowspace_A\bd,\\
\Ten_A(X) &~=~ \Flow_A(X)^\perp.
\end{align*}
These are submodules of $A^F$; note that $\Ten_A(X)=(\Cut_A(X)^\perp)^\perp\supseteq\Cut_A(X)$
and that $\Ten_\Zz(X)/\Cut_\Zz(X)$
is isomorphic to the torsion summand of $\H^d(X;\Zz)$.
An \defterm{integral $k$-tension} of~$X$ is a $\Z$-tension~$\psi$ such that $\psi_i\in[-k+1,k-1]$ for every~$i$.
\end{definition}

\begin{example} 
Let $X$ be the square pyramid of Example~\ref{running-example}. The integral cut and tension spaces are isomorphic, since $X$ is homeomorphic to a 2-sphere and in particular has no torsion (co)homology.  Since $\rank_\Zz\Flow_\Zz(X)=1$, we have $\rank_\Zz\Cut_\Zz=5-1=4$, and looking at the generator of $\Flow_\Zz(X)$, we see that $\Cut_\Zz(X)=\{v_1,v_2,v_3,v_4,v_5\in\Zz\st v_1+v_2+v_3=v_4+v_5\}$, and likewise over $\Zz_4$.
The $\Zz_4$-coloring $c$ in Example~\ref{running-example} gives rise to the $\Zz_4$-tension/cut $\psi=c\bd_2=[2,0,3,2,3]$.
\end{example}

When $X$ is a graph, the numbers of nowhere-zero $\Zz_k$-cuts, flows and tensions are polynomial functions of $k$ \cite{Tutte} and the polynomials for cuts and tensions coincide. In the cell-complex setting,
we will see that they are quasipolynomials.
A \emph{quasipolynomial} is a function $q(k)$ of the form
$q(k) = c_n(k) \, k^n + c_{ n-1 }(k) \, k^{ n-1 } + \dots + c_0(k)$,
where $c_0(k), \dots, c_n(k)$ are periodic functions of $k$; the least common period of $c_0(k), \dots, c_n(k)$ is the
\emph{period} of $q(k)$.  (Thus a polynomial is a quasipolynomial
of period~1.)

\begin{proposition} \label{cuts-equal-tensions-mod-n}
Let $A$ be a field or a finite commutative ring.  Then $\Cut_A(X)=\Ten_A(X)$.
\end{proposition}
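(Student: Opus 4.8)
The plan is to isolate the one nontrivial inclusion and then reduce to a purely local computation. First I would record that $\Cut_A(X)\subseteq\Ten_A(X)$ holds over any commutative ring---this is already observed in Definition~\ref{defn-cut-ten}, since $\Cut_A(X)^\perp=\Flow_A(X)$ and hence $\Cut_A(X)\subseteq\Cut_A(X)^{\perp\perp}=\Ten_A(X)$---so the entire content is the reverse inclusion $\Ten_A(X)\subseteq\Cut_A(X)$. When $A$ is a field this is immediate from linear algebra: the standard bilinear form on $A^F$ is nondegenerate, so $\dim_A W+\dim_A W^\perp=|F|$ for every subspace $W$. Taking $W=\Flow_A(X)=\ker\bd$ and invoking rank--nullity gives $\dim_A\Ten_A(X)=|F|-\dim_A\ker\bd=\rank_A\bd=\dim_A\Cut_A(X)$, and a containment of subspaces of equal dimension is an equality.

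For a finite ring $A$ there is no dimension count, and the naive cardinality identity $|W|\,|W^\perp|=|A|^{|F|}$ need not hold for an arbitrary submodule $W$. The device I would use is that $\bd$ is an \emph{integer} matrix (its entries are the integers $\bd_{rf}$): I would put it in Smith normal form $\bd=UDV$ over $\Z$, with $U,V$ unimodular and $D$ the diagonal matrix of invariant factors $s_1,\dots,s_r$, and then reduce modulo the structure map $\Z\to A$. Because $\det U,\det V=\pm1$, the reductions $\bar U,\bar V$ remain invertible over $A$, so $\bd=\bar U\bar D\bar V$ over $A$. I would then propagate the three spaces through this factorization: $\Flow_A(X)=\ker\bd=\bar V^{-1}\ker\bar D$ and $\Cut_A(X)=\im\bd^T=\bar V^T\rowspace\bar D$, while the non-orthogonal change of basis on the dual side is absorbed by the identity $(\bar V^{-1}W)^\perp=\bar V^T W^\perp$, which gives $\Ten_A(X)=\bar V^T(\ker\bar D)^\perp$. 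Since $\bar V^T$ is invertible, the proposition collapses to the diagonal assertion $\rowspace\bar D=(\ker\bar D)^\perp$.

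For the diagonal matrix $\bar D$ the claim splits coordinatewise: one has $\rowspace\bar D=\bar s_1A\times\cdots\times\bar s_rA\times 0$ and $\ker\bar D=\operatorname{Ann}(\bar s_1)\times\cdots\times\operatorname{Ann}(\bar s_r)\times A$, and since the standard form is the orthogonal sum of the coordinate pairings, $(\ker\bar D)^\perp=\operatorname{Ann}(\operatorname{Ann}(\bar s_1))\times\cdots\times\operatorname{Ann}(\operatorname{Ann}(\bar s_r))\times 0$. Everything therefore reduces to the single-variable identity $\bar sA=\operatorname{Ann}_A(\operatorname{Ann}_A(\bar s))$ for each invariant factor $s\in\Z$. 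The inclusion $\subseteq$ here is formal, so the heart of the matter---and the step I expect to be the main obstacle---is the reverse inclusion $\operatorname{Ann}_A(\operatorname{Ann}_A(\bar s))\subseteq\bar sA$. For a finite field $\bar s$ is either a unit or zero and this is trivial; for $A=\Z_k$ it is the gcd computation $\operatorname{Ann}(\bar s)=(k/g)\Z_k$ and $\operatorname{Ann}\bigl((k/g)\Z_k\bigr)=g\Z_k=\bar sA$, where $g=\gcd(s,k)$. The conceptual reason behind these computations, and the property one must pin down to fix the exact level of generality, is that the relevant rings are self-injective, so that the $A$-valued standard pairing restricts to a perfect pairing on the cyclic modules that arise.
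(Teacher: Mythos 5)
Your argument is correct for every case the paper actually uses ($A$ a field or $A=\Z_k$), but it takes a genuinely different route. The paper proves the stronger assertion that $(B^\perp)^\perp=B$ for \emph{every} submodule $B$ of a free module over a finite ring: it identifies $A^F$ with its Pontryagin dual $\Hom(A^F,\Q/\Z)$ via the inner product, deduces $|B^\perp|=|A^F|/|B|$ from the surjectivity of the restriction map on characters, hence $|(B^\perp)^\perp|=|B|$, and concludes by the inclusion $B\subseteq(B^\perp)^\perp$. You instead exploit the integrality of $\bd$: Smith normal form over $\Z$ reduces the statement to the coordinatewise identity $\bar sA=\operatorname{Ann}_A(\operatorname{Ann}_A(\bar s))$ for the invariant factors $s\in\Z$, which you verify by a gcd computation for $\Z_k$ and a triviality for fields. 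The paper's route is shorter, but it buys its generality on credit: identifying $A^F$ with its Pontryagin dual via the $A$-valued inner product requires a character $A\to\Q/\Z$ whose kernel contains no nonzero ideal, i.e.\ that $A$ be Frobenius (self-injective) --- exactly the hypothesis you flag at the end. Your caveat is not cosmetic: for $A=\Z_4[x,y]/(x^2,xy,y^2,2x,2y)$, which is finite, commutative, and not self-injective, and $X=\Rr P^2$ (so $\bd=[2]$), one has $\Cut_A(X)=2A$ of order $2$ while $\Ten_A(X)=\operatorname{Ann}(\operatorname{Ann}(2))=(2,x,y)$ of order $8$, so the proposition as literally stated fails and both proofs break at the corresponding step. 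Since the paper only ever invokes the result for $A=\Z_k$, your proof covers everything that is needed, and its advantage over the paper's is precisely that the reduction to the double-annihilator identity makes the true hypothesis visible rather than burying it in the phrase ``the inner product identifies $M$ with its Pontryagin dual.''
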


\begin{proof}
We will show that $(B^\perp)^\perp=B$, where $B$ is a submodule of a finitely generated free $A$-module $M=A^r$.
This is elementary if $A$ is
a field.  If $A$ is finite, then 
the inner product identifies $M^r$ with its Pontryagin dual
$M^*=\Hom(M,\Q/\Z)$.
(Usually the Pontryagin dual is defined as $\Hom(M,\C^\x)$, but we wish to write the group
law additively.) For any $A$-submodule (equivalently, Abelian subgroup) $B\subseteq M$, there is
a map $M^*\to B^*$, given by restriction, that is surjective (since any homomorphism $B^*\to\Qq/\Zz$
can be extended to a homomorphism $M^*\to\Q/\Z$) and has kernel $B^\perp$.  We therefore have
an isomorphism $M^*/B^*\isom B^\perp$; therefore $|M^*|/|B^*|=|M|/|B|=B^\perp$ (where the first equality follows from Pontryagin duality).  In particular $|(B^\perp)^\perp|=|B|$,
and since $B\subseteq (B^\perp)^\perp$ we must have equality.  Taking $B=\Cut_k(X)$, we obtain
the desired equality of the cut and tension spaces.
\end{proof}

The following useful fact is immediate from the foregoing definitions.

\begin{proposition} \label{coloring-cut}
The map $c\mapsto \psi := c\,\bd$ is a surjective $\Zz$-linear map from colorings to cuts.
The coloring $c$ is proper if and only if the cut $\psi$ is nowhere zero.
\end{proposition}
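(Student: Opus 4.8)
The plan is to verify each of the three assertions—$\Zz$-linearity, surjectivity onto the cut space, and the properness/nowhere-zero equivalence—directly from the definitions, since no deeper input is needed. First I would pin down the indexing conventions, which is really the only place bookkeeping matters. A coloring $c=(c_1,\dots,c_n)$ is a \emph{row} vector in $A^R$ indexed by the ridges, whereas $\bd$ is the $n\x m$ matrix whose rows are indexed by $R$ and whose columns are indexed by $F$. Consequently the product $c\,\bd$ is a row vector in $A^F$, so that $\psi=c\,\bd$ lands in $A^F$, the ambient module of the cut space; this confirms that the stated map has the right target.

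For the $\Zz$-linearity, the identities $(c+c')\,\bd=c\,\bd+c'\,\bd$ and $(ac)\,\bd=a(c\,\bd)$ for $a\in\Zz$ are just distributivity and associativity of matrix multiplication (and the same computation shows the map is in fact $A$-linear for any coefficient ring $A$). For surjectivity onto $\Cut_A(X)$, the key observation is simply that $\rowspace_A\bd$ is by definition the $A$-span of the rows of $\bd$, while $c\,\bd=\sum_{r\in R}c_r\,(\text{row }r\text{ of }\bd)$ is exactly the general $A$-linear combination of those rows as $c$ ranges over $A^R$. Hence $\{c\,\bd\st c\in A^R\}=\rowspace_A\bd=\Cut_A(X)$, which gives at once that every image point is a cut and that every cut arises from some coloring.

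Finally, for the equivalence, Definition~\ref{defn-coloring} declares $c$ proper precisely when $c\,\bd$ has no zero entries, which is verbatim the condition that $\psi=c\,\bd$ is nowhere zero; so there is nothing to prove beyond matching the two phrasings. The honest assessment is that there is no real obstacle here: the entire content is unwinding the definitions, and the only point requiring care is keeping the row/column (equivalently ridge/facet) indexing straight so that $c\,\bd$ is multiplied on the correct side, together with recognizing that ``row space of $\bd$'' and ``image of left multiplication by $\bd$'' name the same submodule of $A^F$.
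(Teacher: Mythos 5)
Your proposal is correct and matches the paper, which simply declares the proposition ``immediate from the foregoing definitions'': your unwinding of linearity, the identification of $\{c\,\bd \st c\in A^R\}$ with $\rowspace_A\bd=\Cut_A(X)$, and the observation that properness is by definition the nowhere-zero condition on $c\,\bd$ is exactly the intended (and only needed) argument.
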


All of our definitions are natural extensions of the usual notions of cuts, flows and colorings for graphs.  A flow of a graph
is an element of its cycle space: an assignment of weights to the oriented edges so that the total flow into each
vertex equals the total flow out.  A coloring is a function on the vertices.
For graphs, cuts and tensions are synonymous: the minimal supports of nonzero elements of the cut space
are its bonds (the minimal edge sets whose deletion increases the number of components, or equivalently the
cocircuits of the graphic matroid).  The definitions of the cut and flow space of a cell complex are the same as those
of \cite{yvonne} and \cite{DKM-CutFlow} (with different notation).

%%%%%%%%%%%%%%%%%%%%%%%%%%%%%%%%%%%%%%%%%%%%%%%%%%%%%%%%%%%%%%%%
\subsection{Unimodularity Conditions} \label{subsec:uni}
%%%%%%%%%%%%%%%%%%%%%%%%%%%%%%%%%%%%%%%%%%%%%%%%%%%%%%%%%%%%%%%%

Let $M\in\Zz^{n\x m}$ be a matrix of rank~$\rho$.  We say that $M$ is
\defterm{totally unimodular} (TU) if and only if the determinant of
each square submatrix is $-1$, $0$, or $1$.  Totally unimodular
matrices arise in several contexts, such as integer linear programming
\cite[Chapter~8]{Schrijver} and as representations of regular matroids
\cite[Chapter~6]{Oxley}.  A \defterm{totally unimodular cell complex}
is one whose top boundary map is TU.  In topological terms, $X$ is TU
if and only if for all subcomplexes $Z\subseteq Y\subseteq X$ with
$\dim Y=d$ and $\dim Z=d-1$, the relative homology group
$\H_{d-1}(Y,Z;\Zz)$ is torsion free, as proved by Dey, Hirani and
Krishnamoorthy~\cite[Theorem~5.2]{OTL}.  For example, every graph is
TU.

The following facts about TU matrices may be well known, but
we have not found an explicit reference in the literature,
so we include short proofs here.  For a general reference on
TU matrices and their properties, see, e.g., \cite[Chapter 19]{Schrijver}.

\begin{lemma} \label{integer-solution}
Let $M$ be a $n\x m$ TU matrix and $B\in\Zz^n$.
Then the system of equations $MX=B$ has a rational solution in~$X$ if and only if it has
an integer solution.
\end{lemma}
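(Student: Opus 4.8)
The forward implication is trivial (an integer solution is a rational one), so the plan is to prove that rational solvability implies integer solvability. The main idea is that total unimodularity forces every nonzero invariant factor of $M$ to equal~$1$; passing to Smith normal form then reduces the system to a diagonal one whose rational and integral solvability visibly coincide.

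First I would extract the key arithmetic consequence of the TU hypothesis. Let $\rho=\rank M$. For each $i\le\rho$ the matrix $M$ has a nonzero $i\times i$ minor (the $i$-th determinantal divisor vanishes precisely when $i$ exceeds the rank, and an invertible $\rho\times\rho$ submatrix has nonsingular square submatrices of every smaller order by cofactor expansion), and by the TU hypothesis every minor of $M$ lies in $\{-1,0,1\}$. Hence the gcd of the $i\times i$ minors — the $i$-th determinantal divisor $\Delta_i$ — equals $1$ for all $i\le\rho$. Since the invariant factors are the successive quotients $\Delta_i/\Delta_{i-1}$, every nonzero invariant factor of $M$ is $1$.

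Next I would invoke the Smith normal form: write $M=UDV$ with $U\in\mathrm{GL}_n(\Z)$, $V\in\mathrm{GL}_m(\Z)$, and $D$ the $n\times m$ diagonal matrix whose first $\rho$ diagonal entries are the invariant factors and whose remaining entries are~$0$. By the previous step $D$ has $\rho$ ones followed by zeros on its diagonal. Substituting and setting $Y=VX$ and $B'=U^{-1}B$ (which lies in $\Z^n$ because $U^{-1}$ is integral), the system $MX=B$ becomes $DY=B'$, which reads $Y_i=B'_i$ for $i\le\rho$ and $0=B'_i$ for $i>\rho$.

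Finally I would read off the equivalence from this diagonal form. A rational solution $X$ yields a rational $Y$ with $DY=B'$, which forces the consistency conditions $B'_i=0$ for all $i>\rho$; these conditions do not involve $Y$, so they hold identically whether we work over $\Q$ or over $\Z$. Granting them, I may define an integer vector $Y$ by $Y_i=B'_i\in\Z$ for $i\le\rho$ and $Y_i=0$ for $i>\rho$; then $X=V^{-1}Y\in\Z^m$ is an integer solution, since $V^{-1}$ is integral. The one place demanding care — and the only genuine obstacle — is the arithmetic claim that TU forces the invariant factors to be~$1$; once that is established, the remainder is bookkeeping with unimodular changes of basis.
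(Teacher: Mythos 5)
Your proof is correct, but it takes a genuinely different route from the paper. The paper stays inside polyhedral combinatorics: it boxes the rational solution $X_0$ between its integer round-down and round-up, observes that the resulting system $AX\leq C$ still has a totally unimodular constraint matrix, and invokes the Hoffman--Kruskal-type theorem (\cite[Thm.~19.1]{Schrijver}) that a nonempty polyhedron with TU constraint matrix and integral right-hand side has integral vertices. You instead pass to the Smith normal form: TU forces every determinantal divisor $\Delta_i$ ($i\le\rho$) to equal $1$, hence every invariant factor equals $1$, and the diagonalized system $DY=B'$ makes the equivalence of rational and integral solvability transparent. Both arguments are complete; your cofactor-expansion justification that a nonsingular $\rho\x\rho$ submatrix contains nonsingular submatrices of every smaller order is the one point needing care, and it is sound. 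What each buys: the paper's argument reuses machinery it already cites and extends without change to systems of inequalities; yours is self-contained modulo the existence of Smith normal form and, notably, proves a strictly stronger statement --- it only uses that the gcd of the maximal-rank minors is $1$ (the paper's QU condition, which is equivalent to all invariant factors being $1$ since $\Delta_{i}\mid\Delta_{i+1}$), not full total unimodularity. That sharpening is worth recording, since the paper elsewhere works hard to weaken TU to QU and SQU.
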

\begin{proof}
The ``if'' direction is clear.  For the ``only if'' direction, let $X_0$ be a rational solution.
Then $X_0$ is a solution to the system of linear inequalities $AX\leq C$, where
\[A = \begin{bmatrix} M \\ -M \\ I \\ -I \end{bmatrix},\qquad
C = \begin{bmatrix} B \\ -B \\ P \\ -Q \end{bmatrix}\]
are matrices in block form, where $I$ is the $m\x m$ identity matrix and $P,Q$ are obtained by rounding the coordinates of~$X$ up and down, respectively, to the nearest integers.  Then $A$ is a TU matrix \cite[eqn.~(43), \S19.4]{Schrijver}, and the
solution space of $\{X\st AX\leq C\}$ is a nonempty polytope contained in the affine linear space $\{X\st MX=B\}$.
By \cite[Thm.~19.1]{Schrijver}, the vertices of P are all integral, so any such vertex is the desired solution.
\end{proof}

\begin{lemma}
\label{tu-basis-of-kernel}
Let $m\geq n$ and let $M$ be a $n\x m$ TU matrix.
Then there exists a $(m-n)\x m$
TU matrix $Z$ such that the rows of $Z$ are a $\Zz$-basis for
$\ker A$.
\end{lemma}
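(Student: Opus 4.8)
The plan is to reduce everything to a single invertible submatrix of $M$ and then verify total unimodularity of $Z$ from closure properties. For the dimension count to be correct, $M$ must have full row rank $n$ (equivalently $\rho=n$), as the statement implicitly requires, so that $\ker M$ is free of rank $m-n$. Since $M$ is TU of rank $n$, some $n$ of its columns span the column space; the corresponding $n\x n$ submatrix $B$ then has $\det B=\pm1$ by total unimodularity. After permuting columns---an operation that preserves total unimodularity---I would write $M=[\,B\mid N\,]$ with $N$ of size $n\x(m-n)$. Because $\det B=\pm1$, the inverse $B^{-1}$ is integral, so $W:=B^{-1}N$ is an integer matrix, and the columns of $\left[\begin{smallmatrix}-W\\ I_{m-n}\end{smallmatrix}\right]$ are integer vectors lying in $\ker M$. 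The trailing identity block makes these columns $\Zz$-linearly independent and expresses every integral kernel vector as an integral combination of them, so they form a $\Zz$-basis of $\ker M$; equivalently, the rows of $Z:=[\,-W^{\mathsf T}\mid I_{m-n}\,]$ are the desired basis.

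The heart of the argument is to prove that $W=B^{-1}N$ is itself totally unimodular. I would deduce this from Jacobi's identity for the minors of an inverse matrix. Embed $M$ as the top block of the $m\x m$ matrix $\tilde M=\left[\begin{smallmatrix}B&N\\ 0&I\end{smallmatrix}\right]$, whose determinant is $\det B=\pm1$ and whose inverse is $\left[\begin{smallmatrix}B^{-1}&-W\\ 0&I\end{smallmatrix}\right]$. Every minor of $-W$ is a minor of $\tilde M^{-1}$, and by Jacobi's identity it equals $\pm(\det\tilde M)^{-1}$ times a complementary minor of $\tilde M$. Expanding that complementary minor along the rows of the bottom identity block of $\tilde M$ collapses it to an $n\x n$ minor of $[\,B\mid N\,]=M$, which lies in $\{-1,0,1\}$ because $M$ is TU. Hence every minor of $W$ lies in $\{-1,0,1\}$, so $W$ is TU. (Alternatively, one may simply cite the standard fact that pivoting preserves total unimodularity, pivot $M$ into the form $[\,I_n\mid W\,]$, and read off that $W$ is TU.)

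Finally I would build $Z$ using three elementary closure properties: the transpose of a TU matrix is TU (square submatrices of $M^{\mathsf T}$ are transposes of those of $M$), negation preserves TU, and appending an identity block preserves TU. For the last property, any square submatrix of $[\,A\mid I\,]$ either has a selected identity column that restricts to zero on the selected rows---forcing determinant $0$---or can be Laplace-expanded along its identity columns down to a square submatrix of $A$, whose determinant lies in $\{-1,0,1\}$. Applying these to the TU matrix $W$ shows $[\,-W^{\mathsf T}\mid I_{m-n}\,]$ is TU; undoing the initial column permutation preserves TU and yields a matrix $Z$ that is TU and whose rows form a $\Zz$-basis of $\ker M$. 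The only genuinely delicate point is the total unimodularity of $W$---precisely where the Jacobi-identity (or pivoting) input enters---while the remaining steps are bookkeeping.
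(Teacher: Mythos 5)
Your proposal is correct and follows essentially the same route as the paper: write $M=[B\mid N]$ with $B$ an invertible column basis, observe that the pivoted matrix $[I\mid B^{-1}N]$ is TU, and take $Z=[\pm(B^{-1}N)^{\mathsf T}\mid \mp I]$, whose minors all appear (up to sign and transposition) among those of $[I\mid B^{-1}N]$. The only differences are cosmetic: you supply a self-contained Jacobi-identity proof that pivoting preserves total unimodularity where the paper cites Oxley, and you make explicit the full-row-rank assumption and the final column permutation, both of which the paper leaves implicit under ``without loss of generality.''
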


\begin{proof}
Write $M=[B|C]$.  Assume, without loss of generality, that $B$ is a column basis for $M$,
so that it is also a $\Zz$-basis for $\colspace_{\Zz}(M)$.
In particular, $B$ is invertible.
By applying repeated pivot operations,
we can transform the first $r$ columns into the identity
matrix; this is equivalent to multiplying $M$ on the
left by $B^{-1}$, so the resulting matrix is $[I|B^{-1}C]$.
By \cite[Lemma~2.2.20]{Oxley}, this matrix is TU.
Let $Z = [(B^{-1}C)^T|-I]\in\Zz^{(m-n)\x m}$.  Then the rows of $Z$ are a
vector-space basis for $\ker M$, and moreover $Z$ is TU because every
minor of $Z$ equals some minor of $[I|B^{-1}C]$.
\end{proof}

In order to state our enumerative results for as general a class
of cell complexes as possible, we will consider various weakenings of
the TU condition, as we now explain.

\begin{definition}
The matrix~$M$ is \defterm{quasi-unimodular} (QU) if the greatest common divisor of its maximal-rank minors
is 1.  It is \defterm{strongly quasi-unimodular} (SQU) if every column-selected submatrix of $M$ is QU.
\end{definition}

We recall some standard facts about finitely
generated Abelian groups.  There
exist unique positive integers $a_1,\dots,a_\rho$, the \emph{invariant factors} of~$M$,
such that $a_1|a_2|\cdots|a_\rho$ and
\[
  \coker M:=\Zz^n/\colspace(M)\isom \Zz^{n-\rho}\oplus\bigoplus_{i=1}^\rho\Zz_{a_i} \, .
\]
These numbers
can be computed as $a_i= \frac{ g_i }{ g_{i-1}}$, where~$g_i$ denotes the
gcd of the $i\x i$ minors of $M$ (putting $g_0=1$).  In particular the
invariant factors of a matrix and its transpose are identical.

Define a quasipolynomial
function of $M$ by
\begin{equation} \label{gammaMk}
\gamma(M,k)=\prod_{i=1}^{\rho} \gcd(k,a_i) \, .
\end{equation}
Knowing the function $\gamma(M,k)$ is equivalent to knowing the isomorphism
type of the group $\coker M$.  Moreover, if we
denote by $M\otimes\Zz_k$ the matrix $M$ with entries regarded as
elements of $\Zz_k$, then
\begin{equation} \label{count-kernel}
|\ker(M\otimes\Zz_k)| = k^{n-\rho} \gamma(M,k) \, .
\end{equation}
If $X$ is a cell complex of dimension~$d$, we put  $\gamma(X,k)=\gamma(\cbd,k)$,
where $\cbd=\cbd_d(X)$ is its top coboundary matrix.  Then
\begin{equation}\label{cohocount}
|\H^d(X;\Zz_k)|
=\frac{|(\Zz_k)^m|}{|\im(\cbd\otimes\Zz_k)|}
=k^m\frac{|\ker(\cbd\otimes\Zz_k)|}{|(\Zz_k)^n|}
=k^{m-\rho} \gamma(X,k) \, .
\end{equation}

The product $a_1a_2\cdots a_i$ equals the greatest common divisor of the
$i\x i$ minors of~$M$.  Thus the boundary matrix of
a CW-complex~$X$ is QU if and only if $\H_{d-1}(X;\Zz)$ is torsion free,
and it is SQU if and only if $\H_{d-1}(Y;\Zz)$ is torsion-free for every
subcomplex $Y\subseteq X$ generated by a subset of facets.

%%%%%%%%%%%%%%%%%%%%%%%%%%%%%%%%%%%%%%%%%%%%%%%%%%%%%%%%%%%%%%%%
\subsection{Deletion and Contraction} \label{subsec:delcont}
%%%%%%%%%%%%%%%%%%%%%%%%%%%%%%%%%%%%%%%%%%%%%%%%%%%%%%%%%%%%%%%%

Let $\bd$ be an $n\x m$ matrix over~$A$, and let $r\in[n]$ and $f\in[m]$.
Define a matrix $\bd'$ by deleting the $f^{th}$ column from $\bd$.
If $\bd_{rf}$ is a unit in~$A$, then we may in addition define
a matrix $\bd''$ by pivoting at $\bd_{rf}$:
\begin{itemize}
\item For each $i\neq r$, 
replace the row $\bd_i$ by $\bd_i-\frac{\bd_{if}}{\bd_{rf}} \, \bd_r$.
\item Delete the row and column corresponding to $r$ and $f$ respectively.
\end{itemize}
That is, the entries of $\bd''$ are
\begin{equation} \label{pivot}
\bd''_{ij} = \bd_{ij}-\frac{\bd_{if}}{\bd_{rf}} \, \bd_{rj} \, .
\end{equation}

If $\bd$ is the (integer) cellular boundary matrix of a cell complex~$X$
and $\bd_{rf}=\pm1$, then these operations carry topological meaning:
$\bd'$ and $\bd''$ are respectively the boundary matrices of the deletion $X'=X\sm f$,
and the ``contraction'' $X/rf=X/(r,f)$ obtained by deformation-retracting the face~$f$ onto the ridge~$r$.
For instance, if $X$ is a graph then these complexes
are the usual deletion and contraction of the edge~$f$.  More generally,
the column matroids of $\bd'$ and $\bd''$ are the deletion and contraction
of that of $\bd$~\cite[Proposition~3.2.6]{Oxley}.

\begin{example} 
Let $X$ be the square pyramid of Example~\ref{running-example}, shown again in~(a) below.  Contracting the triangular facet~123
onto the edge~13 gives the complex shown in~(b) below, with four facets and seven edges.  (Note that
we do not lose any vertices --- vertex~2 does not disappear, but moves to the line segment joining vertices~1 and~3.)
\includefigure{4in}{1in}{pyramid2} % original is 8" x 2"
The boundary map $\bd_2''$ is given by pivoting on the starred entry of $\bd_2$:
$$\bd_2=
\begin{bmatrix}
 1 & 0 & 0 & 1 & 0\\
-1^* & 1 & 0 & 0 & 0\\
 0 &-1 & 1 & 0 & 0\\
 0 & 0 &-1 &-1 & 0\\
 1 & 0 & 0 & 0 & 1\\
 0 & 0 & 0 & 1 &-1\\
 0 & 1 & 0 & 0 & 1\\
 0 & 0 & 1 & 0 & 1
\end{bmatrix},\qquad
\bd_2''=
\begin{bmatrix}
 1 & 0 & 1 & 0\\
-1 & 1 & 0 & 0\\
 0 &-1 &-1 & 0\\
 1 & 0 & 0 & 1\\
 0 & 0 & 1 &-1\\
 1 & 0 & 0 & 1\\
  0 & 1 & 0 & 1
\end{bmatrix}.$$
The complex pictured in~(c) above is obtained from~$X$ by contracting the square facet onto edge~45.  (We omit the calculation of its boundary matrix.)
Geometrically, it consists of three triangles 152, 123, and~134, together with a pentagon with boundary~14325.
\end{example}

Total unimodularity of an integer matrix is preserved by pivot operations.  Therefore, any TU complex can be repeatedly contracted, choosing the facet and ridge arbitrarily at every step, to eventually yield a cell complex whose boundary map is either zero or empty.  More generally:

\begin{definition} \label{def:ISH}
Let $X$ be a cell complex and let $(r,f)$ be a ridge-facet pair.
Then $X$ is \defterm{shrinkable via $(r,f)$} if $\bd_{rf}=\pm1$.
It is
\defterm{iteratively shrinkable} (ISH) if $\bd$ is zero or empty,
or if it is shrinkable at some pair $(r,f)$, and $X\sm f$ and $X/rf$ are both iteratively shrinkable.
\end{definition}

Thus every TU complex is ISH; in fact, any non-loop facet in an ISH complex can
be chosen as the first one to shrink.  Moreover, every ISH complex is
QU, because if successively shrinking via the pairs $(r_1,f_1)\dots,(r_s,f_s)$ produces a zero or empty matrix,
then the submatrix with rows $r_1,\dots,r_s$ and columns $f_1,\dots,f_s$ has determinant $\pm1$.

To summarize, we have the implications
\[
\text{TU} \quad\implies\quad \text{SQU} \quad\implies\quad \text{QU}
\] and \[
\text{TU} \quad\implies\quad \text{ISH} \quad\implies\quad \text{QU}.
\]
The conditions SQU and ISH are incomparable.  For instance,
the matrix $\begin{bmatrix}3&2\\4&3\end{bmatrix}$ is invertible over $\Zz$,
hence SQU, but is not ISH because it contains no $\pm1$ entries.
On the other hand,
any matrix with
a full-rank identity submatrix is ISH but need not be SQU.

\subsection{Inside-out Polytopes}

A famous theorem of Stanley \cite{Stanley} gives a combinatorial
interpretation for the values of the chromatic polynomial
$\chi^*_G(k)$ of a graph $G$ at negative integers~$k$; in particular, $|\chi^*_G(-1)|$
is the number of acyclic orientations of $G$. Similar interpretations
have been found for the values of the integral and modular flow and
tension polynomials of graphs \cite{inout,MR2274083,breuersanyal}. In
Sections~\ref{integral-section} and~\ref{modular-reciprocity-section},
we will extend these results to cell complexes using lattice-point geometry,
specifically Ehrhart theory (see,
e.g., \cite{br}) and inside-out polytopes~\cite{inout},
which we review here.

Let $S$ be a subset of $\Rr^n$.  Its \defterm{Ehrhart function} is
\begin{equation}
\label{eqn:ehrhart}
E_S(k):=|\Zz^n\cap kS|
\end{equation}
where $kS$ denotes the $k^{th}$ dilate of~$S$ for $k\in\Z_{\geq 1}$.  If $Q$ is a rational convex polytope (i.e., the
convex hull of finitely many points in $\Rr^n$ with rational coordinates), then its Ehrhart function $E_Q(k)$ has the
following properties \cite{ehrhartpolynomial} (see also \cite[Theorem 3.23, Corollary 3.15]{br}):
\begin{itemize}
\item $E_Q$ is a quasipolynomial in~$k$ of degree $\dim Q$. In particular, this allows us to evaluate $E_Q$ at non-positive integers, even though \eqref{eqn:ehrhart} is only defined for positive integers.
\item The period of $E_Q$ divides the denominator of $Q$ (i.e., the
smallest $k$ such that $k^{-1}\mathbb{Z}^n$ contains every vertex of $Q$).
\item $E_Q(0)=1$.
\end{itemize}
We are going to be interested in the Ehrhart functions of various Boolean combinations of rational convex polytopes. Note
that for any such Boolean combination $S$, the above results automatically imply that $E_S(k)$ is a quasipolynomial and can
thus be evaluated at non-positive integers. Of special importance are the Ehrhart quasipolynomials of open polytopes: If
$Q^\circ$ is the relative interior of a rational convex polytope $Q$, then $E_{Q^\circ}(k)$ is a quasipolynomial and the
famous Ehrhart--Macdonald reciprocity theorem \cite{macdonald} (see also \cite[Theorem 4.1]{br}) states that 
\begin{equation}\label{ehrmacrec}
  (-1)^{\dim Q}E_{Q}(-k)=E_{Q^\circ}(k) \, .
\end{equation}

Let $\HH=\{H_1,\dots,H_m\}$ be a central hyperplane arrangement in~$\Rr^n$.  The corresponding \defterm{inside-out polytope} \cite{inout} is
$$C:=(-1,1)^n\sm\bigcup_{j=1}^m H_j.$$
Let $R_1^\circ,\dots,R_s^\circ$ be the open regions (i.e., connected components) of $C$,
and for each~$i$, let $R_i$ denote the topological closure of $R^\circ_i$.  Note that each  $R_i$ is a convex polytope and that $C$ is the disjoint union of the $R^\circ_i$. The Ehrhart quasipolynomial of $C$ is given by
$$E _{C}(k):=\sum_{i=1}^s E_{R_i^\circ}(k) \, .$$
We also define the \defterm{closed Ehrhart quasipolynomial} of~$C$ as
$$\bar{E}_{C}(k):=\sum_{i=1}^s E_{R_i}(k) \, .$$
For any positive integer $k$, the function $\bar{E}_C(k)$ counts lattice points $z\in(\Z^n\cap kC)$ with a multiplicity given by the number of regions $R_i$ that $z$ is contained in. Note that $E_C(0)=\sum_{i=1}^s E_{R_i}(0)=s$.
Our motivation for defining the closed Ehrhart quasipolynomial of $C$ is \eqref{ehrmacrec} which gives us
$$(-1)^{n}\bar{E}_{C}(-k)=E_{C}(k) \, .$$
Motivated by this reciprocity, we will sometimes call the Ehrhart quasipolynomial of $C$ the \defterm{open Ehrhart quasipolynomial} of $C$.

%%%%%%%%%%%%%%%%%%%%%%%%%%%%%%%%%%%%%%%%%%%%%%%%%%%%%%%%%%%%%%%%
\section{Modular Counting Functions} \label{modular-section}

Throughout the rest of the paper, $X$ will denote a cell complex of dimension~$d$,
with facet and ridge sets $F,R$ of cardinalities $m,n$ respectively,
and top cellular boundary matrix $\bd$ of rank $\rho$.

In this section, we study the enumeration of $\Zz_k$-colorings,
tensions and flows on a cell complex~$X$.  The modular
chromatic, tension and flow functions are quasipolynomials
in~$k$.  Subject to certain unimodularity assumptions, they
satisfy deletion--contraction recurrences and Whitney-style
inclusion--exclusion formulas, and can be recovered (essentially)
from the Tutte polynomial of the cellular matroid of~$X$.
Many of the results generalize well-known results about the
chromatic, tension, and flow polynomials
of a graph; see, e.g., \cite[\S5.3]{West}.

\subsection{Modular Colorings} \label{subsec:mod-coloring}

Recall that an $A$-coloring of $X$ is a row vector $c=[c_r]_{r\in R}$
such that $c \, \bd$ is nowhere zero. More generally, a coloring of a
matrix $M$ is a row vector $c$ such that $cM$ is nowhere zero.

\begin{definition}
The \defterm{modular chromatic function} of~$X$ is
$$\chi^*_X(k):= \text{number of proper $\mathbb{Z}_k$-colorings of $X$}.$$
\end{definition}

When $X$ is a graph, this is just its chromatic polynomial.
We will show that in general, the modular chromatic function is a quasipolynomial, and
it is a polynomial when $X$ is SQU.

If $X$ has no facets, then $\chi_X^*(k)=k^n$ (because all colorings
are proper), while if $\bd$ has at least one zero column, then
$\chi^*_X(k)=0$.  (If $X$ is a graph, the second condition
says that $X$ has a loop.)  Moreover, if $P$ is any $n\x n$ matrix
that is invertible over~$\Zz$, then replacing $\bd$ with $P\bd$ does not change the
modular chromatic function of~$X$ (even though this operation does not in general
make topological sense).

\begin{theorem} \label{chromatic-del-cont}
Let $r$ and $f$ be a ridge and facet of $X$, respectively, such that $\bd_{rf}=\pm1$, and let $X'=X\sm f$ and $X''=X/rf$.
Then
$$\chi_X^*(k)=\chi^*_{X'}(k)-\chi^*_{X''}(k) \, .$$
\end{theorem}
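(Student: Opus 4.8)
The plan is to adapt the classical graph-theoretic deletion--contraction argument to the matrix setting, exploiting that $\bd_{rf}=\pm1$ is a unit in every $\Z_k$. The key observation is that colorings of $X'=X\sm f$ and of $X$ live in the same space $\Z_k^R$ (deleting a facet does not change the ridge set), and that a coloring $c\in\Z_k^R$ is proper for $X'$ exactly when $(c\bd)_g\neq 0$ for every facet $g\neq f$, with no constraint imposed at $f$. I would therefore partition the proper colorings of $X'$ according to the value of the single entry $(c\bd)_f$: those with $(c\bd)_f\neq 0$ are precisely the proper colorings of $X$, while those with $(c\bd)_f=0$ form a set I shall call $N$. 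This immediately gives $\chi^*_{X'}(k)=\chi^*_X(k)+|N|$, so the whole theorem reduces to the identity $|N|=\chi^*_{X''}(k)$.

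To establish that identity I would construct an explicit bijection between $N$ and the proper colorings of $X''$. Since the contraction $X''=X/rf$ has ridge set $R\sm\{r\}$, a coloring of $X''$ is a vector $(c_i)_{i\neq r}\in\Z_k^{R\sm\{r\}}$. Given such a vector, the constraint $(c\bd)_f=\sum_i c_i\bd_{if}=0$ can be solved for the missing coordinate, because $\bd_{rf}$ is a unit: setting $c_r=-\bd_{rf}^{-1}\sum_{i\neq r}c_i\bd_{if}$ produces the unique extension to a coloring $c\in\Z_k^R$ with $(c\bd)_f=0$. This map $(c_i)_{i\neq r}\mapsto c$ is visibly a bijection onto the colorings of $X$ satisfying $(c\bd)_f=0$.

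The heart of the argument is to check that this bijection carries the properness condition on $X''$ to the defining condition of $N$. For any facet $j\neq f$, substituting the formula for $c_r$ into $(c\bd)_j=c_r\bd_{rj}+\sum_{i\neq r}c_i\bd_{ij}$ yields
\[
(c\bd)_j=\sum_{i\neq r}c_i\Bigl(\bd_{ij}-\tfrac{\bd_{if}}{\bd_{rf}}\bd_{rj}\Bigr)=\sum_{i\neq r}c_i\bd''_{ij}=(c''\bd'')_j,
\]
where $\bd''$ is the pivoted boundary matrix of $X''$ given by \eqref{pivot} and $c''=(c_i)_{i\neq r}$. Thus the entries of $c''\bd''$ coincide exactly with the entries $(c\bd)_j$ for $j\neq f$, so $c''$ is a proper coloring of $X''$ if and only if $c\in N$. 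Combining this with the partition of the previous paragraph gives $\chi^*_{X'}(k)=\chi^*_X(k)+\chi^*_{X''}(k)$, which rearranges to the claimed recurrence.

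I expect the only delicate points to be bookkeeping ones: confirming that deletion preserves the ridge set while contraction removes exactly the row $r$ and column $f$, so that the three coloring spaces match up as stated, and verifying that the displayed substitution is valid over $\Z_k$ (which it is, since $\bd_{rf}=\pm1$ is invertible mod $k$ for every $k$). Degenerate cases---such as $\bd''$ acquiring a zero column---require no separate treatment, since the bijection automatically forces the corresponding count to vanish on both sides.
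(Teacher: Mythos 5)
Your proposal is correct and follows essentially the same route as the paper's proof: partition the proper colorings of $X'$ according to whether $(c\,\bd)_f$ vanishes, identify the nonvanishing part with the proper colorings of $X$, and biject the vanishing part with the proper colorings of $X''$ by solving for $c_r$ using the unit pivot entry and verifying via the pivot formula \eqref{pivot} that $(c\,\bd)_j=(c''\bd'')_j$ for $j\neq f$. The computation and the bijection are the same as in the paper, just organized slightly differently.
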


\begin{proof}
Every proper coloring $c$ of $X$ is a proper coloring of $X\sm f$ (because if $c \, \bd=0$,
then $c \, \bd'_j=c \, \bd_j$ for all $j\in F\sm f$).  Conversely,
if $c'=(c_i)_{i\in R}$ is a proper coloring of $X'$ and $\psi=c \, \bd$, then
$\psi_j\neq 0$ for all $j\neq f$, and
$c$ is a proper coloring of~$X$ if and only if $\psi_f\neq0$.

Suppose that $\psi_f=0$.  Then we claim that the proper coloring $c'$ of $X'$ restricts to
a proper coloring $c''=(c_1,\dots,\widehat{c_r},\dots,c_n)$ of~$X''$ (where the hat denotes removal).
Indeed, for $j\neq f$, we have by~\eqref{pivot}
\begin{align*}
(c''\cdot\bd'')_j 
&= \sum_{i\in R\sm r} c_i \, \bd''_{ij}
= \sum_{i\in R\sm r} c_i\left(\bd_{ij}-\frac{\bd_{if}}{\bd_{rf}}\bd_{rj}\right)\\
&= \sum_{i\in R} c_i \, \bd_{ij}-c_r \, \bd_{rj}+\frac{\bd_{rj}}{\bd_{rf}}\left(c_r \, \bd_{rf}-\sum_{i\in R}c_i \, \bd_{if}\right)\\
&= \psi_j-\frac{\bd_{rj}}{\bd_{rf}}\psi_f ~\neq~ 0 \, .
\end{align*}

On the other hand, we claim that every proper coloring $c''=(c_1,\dots,\widehat{c_r},\dots,c_n)$ of~$X''$ can be extended uniquely to a proper coloring
$c'$ of $X'$ with $c \, \bd_f=0$.  Indeed, there is a unique value of $c_r$ for which $c \, \bd_f=0$, namely
$$c_r=-\frac{1}{\bd_{rf}}\sum_{i\in R\sm r} c_i \, \bd_{if} \, , $$
and then, for all $j\neq f$, we have (again using~\eqref{pivot})
$$c\cdot\bd_j = \sum_{i\in R}c_i \, \bd_{ij}
= \sum_{i\in R\sm r}c_i\left(\bd''_{ij}+\frac{\bd_{if}}{\bd_{rf}}\bd_{rj}\right)-\frac{\bd_{rj}}{\bd_{rf}}\sum_{i\neq r} c_i\bd_{if}
= \sum_{i\in R\sm r}c_i\bd''_{ij}
$$
which is nonzero by assumption on $c''$.  Accordingly, the correspondence between $c'$ and $c''$ is a bijection.
\end{proof}

\begin{remark}
This argument does not work for \emph{integral} $k$-colorings.  That
is, if $\bd_{rf}=1$ and $c\in\Zz^{R\sm\{r\}}$ is a proper $k$-coloring
of $X/rf$ with $-k<c_i<k$ for every $i$, then we may define
$c_r=\sum_{i\in R-\{r\}}c_i \, \bd_{if}$, but there is no reason to
expect that $-k<c_f<k$.
\end{remark}

We now consider the case that $f$ is a coloop of the cellular matroid.
Again, suppose that $\bd_{rf}=1$ and that $f$ and $r$ are represented by the first column and first row
of~$\bd$.
Pivoting at the $(1,1)$ position (a series of $\Zz$-invertible row operations) gives a matrix of the form
\[\begin{bmatrix}1&v\\ 0&\bd''\end{bmatrix}\]
in which $v$ is $\Qq$-linearly dependent on the rows of $\bd''$.  (Note that this condition is
always satisfied when $X$ is $SQU$.)
If $v$ is in fact a \emph{$\Zz$-linear}
combination of the rows of $\bd''$, then we can perform further $\Zz$-invertible row operations
to obtain the matrix
\[\begin{bmatrix}1&0\\0&\bd''\end{bmatrix}\]
whose colorings are precisely the row vectors of the form $[c_1,\dots,c_r]$
such that $c_1\in \Zz_k\sm\{0\}$ and $[c_2,\dots,c_r]$ is a coloring of $X''$.  Therefore, under these assumptions,
\begin{equation} \label{coloop-case}
\chi_X^*(k)=(k-1) \, \chi^*_{X''}(k) \, .
\end{equation}

We now consider how to express the modular chromatic function as a Whitney-style
sum
over subsets of facets, or as an evaluation of the Tutte polynomial.
For a set $J\subseteq F$ of facets, recall that $X_J$ is the CW-subcomplex of
$X$ generated by $J\cup R$. Let $\bd_J$ be its boundary matrix,
which is the submatrix of $\bd$ consisting of the columns indexed
by~$J$.  Let $\rho(J)=\rank\bd_J$; note that $\rho$ is the rank
function of the cellular matroid $\MM(X)$.

\begin{theorem} \label{coloring-inc-exc}
We have
\[\chi_X^*(k)=\sum_{J\subseteq F}(-1)^{|J|} k^{n-|J|} |\H^d(X_J;\Zz_k)|
=\sum_{J\subseteq F} (-1)^{|J|} k^{n-\rho(J)} \gamma(X_J,k)\]
where $\gamma$ is defined as in \eqref{gammaMk}.
In particular, the modular coloring function $\chi_X^*(k)$ is a quasipolynomial in~$k$
whose period divides the lcm of the set of all invariant factors of column-selected
submatrices of~$\bd$.
\end{theorem}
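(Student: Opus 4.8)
The plan is to derive the formula by an inclusion--exclusion over the facets, viewing proper colorings as those $\Zz_k$-colorings that avoid a prescribed list of linear conditions. For a facet $f\in F$, say that a coloring $c\in(\Zz_k)^R$ is \emph{bad at $f$} if $(c\,\bd)_f=0$, and let $A_f\subseteq(\Zz_k)^R$ be the set of colorings bad at $f$. By definition $c$ is proper precisely when it lies in none of the $A_f$, so the sieve formula gives
\[
\chi_X^*(k)=\sum_{J\subseteq F}(-1)^{|J|}\,\Bigl|\bigcap_{f\in J}A_f\Bigr|.
\]
A coloring is bad at every $f\in J$ exactly when $c\,\bd_J=0$, where $\bd_J$ is the column-selected submatrix of $\bd$ indexed by $J$; that is, $\bigcap_{f\in J}A_f$ is the \emph{left} kernel of $\bd_J$ over $\Zz_k$. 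Writing $N_J$ for the size of this kernel, the first task is to evaluate $N_J$.

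Transposing converts the left kernel of $\bd_J$ into the ordinary (column) kernel of $\bd_J^{T}$, which is the top coboundary matrix of the subcomplex $X_J$; thus $N_J=|\ker(\bd_J^{T}\otimes\Zz_k)|$. I would read this count off directly from the cohomology computation \eqref{cohocount} applied to $X_J$, whose facet set has size $|J|$ while its ridge set is still $R$ of size $n$: rearranging the chain of equalities in \eqref{cohocount} gives $N_J=k^{\,n-|J|}\,|\H^d(X_J;\Zz_k)|$, which is the first of the two claimed expressions. For the second form I would substitute $|\H^d(X_J;\Zz_k)|=k^{\,|J|-\rho(J)}\gamma(X_J,k)$, again from \eqref{cohocount}, so that the powers involving $|J|$ cancel and $N_J=k^{\,n-\rho(J)}\gamma(X_J,k)$. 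Equivalently one can apply the kernel count \eqref{count-kernel} to $\bd_J^{T}$ and use that a matrix and its transpose share the same invariant factors, hence the same value of $\gamma$; either route yields $N_J=k^{\,n-\rho(J)}\gamma(X_J,k)$, and substituting back into the sieve produces both displayed sums (the $J=\emptyset$ term correctly contributing $k^n$).

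Finally, for the quasipolynomiality and the period bound I would argue termwise. Fixing $J$ and letting $a_1\mid\cdots\mid a_{\rho(J)}$ be the invariant factors of $\bd_J$, each factor $\gcd(k,a_i)$ in $\gamma(X_J,k)=\prod_i\gcd(k,a_i)$ depends only on $k\bmod a_i$ and so is periodic with period dividing $a_i$; hence $\gamma(X_J,k)$ is periodic with period dividing $\operatorname{lcm}_i a_i$, and $k^{\,n-\rho(J)}\gamma(X_J,k)$ is a quasipolynomial of the same period. A finite sum of quasipolynomials is again a quasipolynomial whose period divides the least common multiple of the summands' periods, so $\chi_X^*(k)$ is a quasipolynomial whose period divides the lcm, over all $J\subseteq F$, of the invariant factors of $\bd_J$ --- that is, the lcm of the invariant factors of all column-selected submatrices of $\bd$. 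Each step is individually routine; the only place demanding real care is the middle paragraph, namely keeping straight that the sieve produces a \emph{left} kernel and that passing to the transpose $\bd_J^{T}$ is precisely what allows \eqref{cohocount} and \eqref{count-kernel} to be invoked with the correct rank $\rho(J)$ and the correct exponent of $k$.
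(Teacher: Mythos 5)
Your proposal is correct and follows essentially the same route as the paper: inclusion--exclusion over facets, identification of $\bigcap_{f\in J}A_f$ with $\ker_{\Zz_k}\cbd_J$, and the counts \eqref{cohocount} and \eqref{count-kernel} (together with the invariance of invariant factors under transposition) to evaluate each term. Your explicit termwise periodicity argument for the period bound is a clean, direct justification of the final assertion, which the paper essentially leaves to the reader.
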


\begin{proof}
By inclusion--exclusion and \eqref{count-kernel},
\begin{align*}
\chi_X^*(k)&=\sum_{J\subseteq F}(-1)^{|J|}\left|\{c\in\Z_k^R:c \, \bd_J=0\}\right|\\
&=\sum_{J\subseteq F}(-1)^{|J|}\left|\ker_{\Zz_k}\cbd_J\right|\\
&=\sum_{J\subseteq F}(-1)^{|J|} k^{n-|J|} |\H^d(X_J;\Zz_k)|\\
&=\sum_{J\subseteq F}(-1)^{|J|} k^{n-\rho(J)} \gamma(\bd_J,k) \, ,
\end{align*}
proving the first assertion, from which the second follows as a special case.
The third assertion follows from the formula for the Tutte polynomial as a corank--nullity generating function \cite{BryOx}.
\end{proof}

\begin{corollary} \label{chromatic-from-Tutte}
Suppose that $X$ has no zero columns and $\bd$ is SQU (for instance, if $X$ is TU).
Then
\[\chi_X^*(k)=\sum_{J\subseteq F} (-1)^{|J|} k^{n-\rho(J)}
=(-1)^{\rho} k^{n-\rho} \, T_X(1-k, 0)\]
where $T$ is the Tutte polynomial of $\MM(X)$.
In particular, $\chi_X^*(k)$ is a monic polynomial in $k$ of degree $n$
that is divisible by $k^{n-\rho}$, whose coefficients alternate in sign,
and whose $k^{n-1}$-coefficient is the number of equivalence classes
of facets of $X$ under the relation that two facets are equivalent
if the corresponding columns of $\bd$ are scalar multiples of each other
over $\Qq$.
\end{corollary}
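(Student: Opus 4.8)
The plan is to start from the inclusion--exclusion formula of Theorem~\ref{coloring-inc-exc} and show that the SQU hypothesis forces every factor $\gamma(\bd_J,k)$ to collapse to~$1$. Since $\bd_J$ is a column-selected submatrix of~$\bd$, the SQU condition guarantees that $\bd_J$ is QU, i.e.\ the gcd of its maximal-rank minors is~$1$. As recorded in Section~\ref{subsec:uni}, this gcd equals the product $a_1\cdots a_{\rho(J)}$ of the invariant factors of~$\bd_J$, so all of these invariant factors equal~$1$, whence $\gamma(\bd_J,k)=\prod_{i=1}^{\rho(J)}\gcd(k,a_i)=1$ identically in~$k$. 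Substituting into Theorem~\ref{coloring-inc-exc} yields the first asserted equality $\chi_X^*(k)=\sum_{J\subseteq F}(-1)^{|J|}k^{n-\rho(J)}$.

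Next I would recognize the right-hand side as a signed specialization of the corank--nullity generating function $T(x,y)=\sum_{J\subseteq F}(x-1)^{\rho-\rho(J)}(y-1)^{|J|-\rho(J)}$ of $\MM(X)$. Setting $(x,y)=(1-k,0)$, so that $x-1=-k$ and $y-1=-1$, each summand becomes $(-1)^{\rho+|J|}k^{\rho-\rho(J)}$ after collecting the sign $(-1)^{2\rho(J)}=1$; thus $T_X(1-k,0)=(-1)^\rho\sum_J(-1)^{|J|}k^{\rho-\rho(J)}$, and multiplying by $(-1)^\rho k^{n-\rho}$ recovers $\chi_X^*(k)$. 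This gives the second equality $\chi_X^*(k)=(-1)^\rho k^{n-\rho}T_X(1-k,0)$; the step is purely algebraic and I anticipate no difficulty.

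For the structural claims I would argue as follows. The hypothesis that $X$ has no zero columns means $\MM(X)$ is loopless, so $\rho(J)\ge 1$ for every nonempty~$J$ and the unique term of top degree is the $J=\emptyset$ term $k^{n}$; hence $\chi_X^*$ is monic of degree~$n$. Since $\rho(J)\le\rho$ for all~$J$, each exponent $n-\rho(J)$ is at least $n-\rho$, giving divisibility by $k^{n-\rho}$. Writing $\chi_X^*(k)=k^{n-\rho}p(k)$, the factor $p(k)=\sum_J(-1)^{|J|}k^{\rho-\rho(J)}$ is exactly the characteristic polynomial of the loopless matroid $\MM(X)$, whose coefficients alternate in sign; multiplication by $k^{n-\rho}$ preserves this. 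Finally, the coefficient of $k^{n-1}$ is $\sum_{J:\,\rho(J)=1}(-1)^{|J|}$. Because no column is zero, $\rho(J)=1$ holds precisely when $J$ is a nonempty set of pairwise $\Qq$-proportional columns, i.e.\ a nonempty subset of a single equivalence class under the stated relation, and no cross-class set has rank~$1$. Summing $(-1)^{|J|}$ over the nonempty subsets of a class of size $s_i\ge 1$ gives $(1-1)^{s_i}-1=-1$, so the coefficient equals $-t$ where $t$ is the number of equivalence classes; this is consistent with the established alternation (the leading coefficient being $+1$), its absolute value being~$t$.

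The main obstacle is the alternating-sign assertion, the only step relying on input beyond direct computation: it is the classical fact that the characteristic polynomial of a loopless matroid has coefficients alternating in sign. I would supply this either via Whitney's broken-circuit theorem, expressing $|p(k)|$ as a nonnegative-coefficient count of independent sets containing no broken circuit, or via the sign-alternation of the M\"obius function on the geometric lattice of flats of $\MM(X)$. Everything else reduces to the collapse of $\gamma$ under SQU and routine bookkeeping with the corank--nullity polynomial.
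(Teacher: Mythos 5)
Your proposal is correct and follows essentially the same route as the paper: the paper's one-line proof simply observes that $\chi_X^*(k)$ is the characteristic polynomial of the arrangement of hyperplanes normal to the columns of $\bd$ and defers the structural claims to a standard reference, whereas you unpack exactly that content --- the collapse of $\gamma(\bd_J,k)$ to $1$ under SQU in Theorem~\ref{coloring-inc-exc}, Whitney's corank--nullity identity, sign-alternation of the characteristic polynomial of a loopless matroid, and the parallel-class computation for the $k^{n-1}$-coefficient. All of your steps check out (including the observation that the stated $k^{n-1}$-coefficient is really the class count up to the sign forced by alternation), so your write-up is a correct, self-contained version of the paper's argument rather than a different one.
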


\begin{proof}
These assertions follow from the observation that 
$\chi_X^*(k)$ is just the characteristic polynomial of the hyperplane arrangement whose
normal vectors are the columns of~$\bd$; see, e.g., \cite[\S3.11.2]{EC2v2}.
\end{proof}

\begin{example}
Let $X$ be the real projective plane $\Rr P^2$ with its standard cell structure (one cell in each of dimensions 0, 1, 2).  The top boundary map
is $\bd=[2]$, so a modular $k$-coloring is an assignment of a color $c\in\Zz_k$ to the 1-cell such that $2c\neq 0\bmod k$.  Therefore,
the modular coloring function is
$$\chi_X^*(k)=\begin{cases} k-1 \text{ if $k$ is odd}\\k-2 \text{ if $k$ is even}\end{cases} = \ k-\gcd(k,2).$$
\end{example}

\begin{example}\label{dskeletonofNsimplex}
Let $X=\Delta_{N,d}$ be the $d$-dimensional skeleton of a simplex on $N$ vertices, so that $m=\binom{N}{d+1}$ and $n=\binom{N}{d}$.
This cell complex is QU (because $X$ is homotopy-equivalent to a wedge of $d$-dimensional spheres), but is not TU in general; for example, $\Delta_{6,2}$ contains
subcomplexes homeomorphic to $\Rr P^2$.  When $d=1$, $\Delta_{n,d}$ is a complete graph, whose chromatic polynomial is well known
to be $k(k-1)(k-2)\cdots(k-n+1)$.  A general formula for $\chi_X^*(k)$ in terms of $N$ and $d$ is not apparent.
\end{example}

%%%%%%%%%%%%%%%%%%%%%%%%%%%%%%%%%%%%%%%%%%%%%%%%%%%%%%%%%%%%%%%%
\subsection{Modular Cuts and Tensions} \label{subsec:mod-cut-ten}
%%%%%%%%%%%%%%%%%%%%%%%%%%%%%%%%%%%%%%%%%%%%%%%%%%%%%%%%%%%%%%%%

\begin{definition}
For $k\in\Nn$, let $\tau^*_X(k)$
be the number of nowhere-zero $\Zz_k$-tensions.
The function $\tau^*_X:\Nn\to\Zz$ is called the \defterm{modular tension function} of~$X$.
\end{definition}

\begin{proposition} \label{modular-cut-theorem}
We have
\[\tau^*_X(k) ~=~ \frac{k^{m-n}}{|\H^d(X;\Zz_k)|}\,\chi^*_X(k) ~=~ \frac{k^{\rho-n}}{\gamma(X,k)}\, \chi^*_X(k) \, .\]
\end{proposition}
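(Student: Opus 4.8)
The plan is to count the proper $\Zz_k$-colorings by fibering them over the cut space via the surjective $\Zz$-linear map $\phi\colon c\mapsto c\,\bd$ of Proposition~\ref{coloring-cut}, read with coefficients in $\Zz_k$. Viewing $\phi$ as a homomorphism of finite Abelian groups $\Zz_k^R\to\Cut_{\Zz_k}(X)$ (surjective because the cut space is by definition the $\Zz_k$-row space of $\bd$), I would first record that, by the definition of properness together with Proposition~\ref{coloring-cut}, a coloring $c$ is proper exactly when $\psi=c\,\bd$ is nowhere zero; hence the proper $\Zz_k$-colorings are precisely the $\phi$-preimage of the nowhere-zero elements of $\Cut_{\Zz_k}(X)$. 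Since $\phi$ is a homomorphism onto the cut space, each fiber has the same cardinality $|\ker\phi|$, so
\[
\chi^*_X(k)=|\ker\phi|\cdot\bigl|\{\text{nowhere-zero elements of }\Cut_{\Zz_k}(X)\}\bigr|.
\]

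I would then evaluate the two factors. For the second, $\Zz_k$ is a finite commutative ring, so Proposition~\ref{cuts-equal-tensions-mod-n} yields $\Cut_{\Zz_k}(X)=\Ten_{\Zz_k}(X)$; thus the nowhere-zero cuts are exactly the nowhere-zero $\Zz_k$-tensions, whose number is $\tau^*_X(k)$ by definition. For the first, $\ker\phi=\{c\in\Zz_k^R\colon c\,\bd=0\}$ is carried by transposition to $\ker(\cbd\otimes\Zz_k)$, which was shown in the derivation of \eqref{cohocount} to have order $k^{n-\rho}\gamma(X,k)$; equivalently one applies \eqref{count-kernel} to $\bd$, using that $\bd$ and $\cbd=\bd^T$ share invariant factors so that $\gamma(\bd,k)=\gamma(X,k)$.

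Combining the two evaluations gives $\chi^*_X(k)=k^{n-\rho}\gamma(X,k)\,\tau^*_X(k)$, and solving for $\tau^*_X(k)$ produces the second displayed expression $\frac{k^{\rho-n}}{\gamma(X,k)}\,\chi^*_X(k)$. To obtain the first expression I would substitute $|\H^d(X;\Zz_k)|=k^{m-\rho}\gamma(X,k)$ from \eqref{cohocount}, which converts $\frac{k^{m-n}}{|\H^d(X;\Zz_k)|}$ into $\frac{k^{\rho-n}}{\gamma(X,k)}$ and so identifies the two formulas.

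The argument is short and essentially bookkeeping, so I do not anticipate a serious obstacle; the step demanding the most care is the kernel count. One must read $\ker\phi$ as the \emph{left} kernel, consisting of row vectors $c\in\Zz_k^R\cong\Zz_k^n$ with $c\,\bd=0$, so that its order is governed by the $n$ ridges and equals $k^{n-\rho}\gamma(X,k)$, rather than the $k^{m-\rho}$ that the column kernel of $\bd$ would give. Tracking the roles of $\bd$ and $\cbd$ and the transpose-invariance of $\gamma$ is the only real subtlety.
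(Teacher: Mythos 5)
Your proposal is correct and follows essentially the same route as the paper: the paper's proof likewise invokes Proposition~\ref{cuts-equal-tensions-mod-n} to identify nowhere-zero tensions with nowhere-zero cuts, observes that each arises from exactly $|\ker\cbd|$ colorings, and then applies the kernel count $|\ker(\cbd\otimes\Zz_k)|=k^{n-\rho}\gamma(X,k)$ together with \eqref{cohocount}. Your care in distinguishing the left kernel of $\bd$ (of order $k^{n-\rho}\gamma(X,k)$) from its column kernel is exactly the right point to watch, and your bookkeeping matches the paper's.
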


\begin{proof}
Recall that the cut space and the tension space are identical over a finite base ring
(Proposition~\ref{cuts-equal-tensions-mod-n}).  Therefore,
the nowhere-zero tensions are precisely the vectors
of the form $c \, \bd$ for some proper coloring $c$, and each such tension
arises in this way from exactly $|\ker\cbd|$ colorings.
\end{proof}

By combining Proposition~\ref{modular-cut-theorem} with the results of
Section~\ref{subsec:mod-coloring}, we obtain the following ways to compute
$\tau^*_X(k)$.

\begin{corollary}
\begin{enumerate}[{\rm (1)}]
\item (Inclusion--exclusion formula) By Theorem~\ref{coloring-inc-exc}, we have
\begin{align*}
\tau^*_X(k) &= \frac{1}{|\H^d(X;\Zz_k)|}\,\sum_{J\subseteq F}(-1)^{|J|} k^{m-|J|} |\H^d(X_J;\Zz_k)|\\
&= \frac{1}{\gamma(X,k)}\, \sum_{J\subseteq F} (-1)^{|J|} k^{\rho-\rho(J)} \gamma(X_J,k) \, .
\end{align*}

\item (Deletion--contraction) Suppose that $\bd_{rf}=1$.  Let $X'=X-f$ and $X''=X/rf$.  Then
\begin{align*}
\tau^*_X(k)
&~=~ \frac{|\H^d(X' ;\Zz_k)|}{|\H^d(X;\Zz_k)|}\,k\,\tau^*_{X' }(k)
   - \frac{|\H^d(X'';\Zz_k)|}{|\H^d(X;\Zz_k)|}   \,\tau^*_{X''}(k)\\
~&=~ \frac{\gamma(X' ,k)}{\gamma(X,k)}\,k\,\tau^*_{X' }(k)
    -\frac{\gamma(X'',k)}{\gamma(X,k)}   \,\tau^*_{X''}(k) \, .
\end{align*}
If in addition $f$ is a coloop and the row vector $[\bd_{rj}]_{j\in F\sm\{f\}}$ is a $\Zz$-linear
combination of the vectors $[\bd_{ij}]_{j\in F\sm\{f\}}$ for $i\in R\sm\{r\}$, then \eqref{coloop-case}
implies that
\[\tau_X^*(k)=(k-1)\tau^*_{X''}(k) \, .\]

\item (Tutte-polynomial specialization) If $X$ is SQU and $T$ is the Tutte polynomial of $\MM(X)$, then
\[\tau^*_X(k) = (-1)^r T_X(1-k, 0) \, .\]
\end{enumerate}
\end{corollary}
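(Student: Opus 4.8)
The plan is to deduce all three parts from Proposition~\ref{modular-cut-theorem}, which expresses the modular tension function as $\tau^*_X(k) = k^{\rho-n}\gamma(X,k)^{-1}\chi^*_X(k)$ (equivalently $k^{m-n}|\H^d(X;\Zz_k)|^{-1}\chi^*_X(k)$), by substituting in the appropriate formula for $\chi^*_X(k)$ from Section~\ref{subsec:mod-coloring}. For part~(1) this is immediate: I would insert the two inclusion--exclusion expressions of Theorem~\ref{coloring-inc-exc} into the two forms of Proposition~\ref{modular-cut-theorem}. In the first form, the prefactor $k^{m-n}$ combines with the summand $k^{n-|J|}$ to give $k^{m-|J|}$; in the second, $k^{\rho-n}$ combines with $k^{n-\rho(J)}$ to give $k^{\rho-\rho(J)}$. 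Both are purely formal manipulations requiring no new input.

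For part~(2) I would first invert Proposition~\ref{modular-cut-theorem} to write $\chi^*_Y(k) = |\H^d(Y;\Zz_k)|\,k^{n_Y-m_Y}\tau^*_Y(k)$ for an arbitrary complex~$Y$ (with $m_Y,n_Y$ its numbers of facets and ridges), and then apply the chromatic deletion--contraction recurrence $\chi^*_X(k)=\chi^*_{X'}(k)-\chi^*_{X''}(k)$ of Theorem~\ref{chromatic-del-cont}. The necessary bookkeeping is that $X'=X\sm f$ has $m-1$ facets and $n$ ridges, while $X''=X/rf$ has $m-1$ facets and $n-1$ ridges. Substituting these data into the inverted formula, the powers of~$k$ align so that dividing through by $|\H^d(X;\Zz_k)|\,k^{n-m}$ produces exactly the stated recurrence, with the homology ratios appearing as coefficients.

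For the coloop subcase and for part~(3), the essential point is to control the factor $\gamma$ (equivalently $|\H^d(-;\Zz_k)|$). For the coloop case I would use the reduction established in Section~\ref{subsec:mod-coloring} under the stated $\Zz$-linearity hypothesis: $\bd$ is brought to the block form $\left[\begin{smallmatrix}1&0\\0&\bd''\end{smallmatrix}\right]$ by $\Zz$-invertible row operations. Such operations preserve invariant factors, so the nontrivial invariant factors of $\bd$ coincide with those of $\bd''$, giving $\gamma(X,k)=\gamma(X'',k)$; combined with the rank drop $\rho=\rho''+1$ (visible from the block form) and the facet count $m''=m-1$, formula~\eqref{cohocount} yields $|\H^d(X;\Zz_k)|=k^{m-\rho}\gamma(X,k)=k^{(m-1)-(\rho-1)}\gamma(X'',k)=|\H^d(X'';\Zz_k)|$. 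Hence the homology ratio is~$1$, and \eqref{coloop-case} gives $\tau^*_X(k)=(k-1)\tau^*_{X''}(k)$. For part~(3), since SQU implies QU, all invariant factors of~$\bd$ equal~$1$, so $\gamma(X,k)=\gamma(\cbd,k)=1$; substituting the formula $\chi^*_X(k)=(-1)^\rho k^{n-\rho}T_X(1-k,0)$ of Corollary~\ref{chromatic-from-Tutte} into Proposition~\ref{modular-cut-theorem} then collapses the powers of~$k$ and leaves $(-1)^\rho T_X(1-k,0)$ (with $r=\rho$).

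I expect the main obstacle to be the coloop case: verifying that the homology correction factor is \emph{exactly}~$1$ requires carefully combining the invariance of invariant factors under the block reduction with the simultaneous changes in rank and facet count, and the $\Zz$-linearity hypothesis is precisely what legitimizes the block reduction (and hence this cancellation). The remaining parts are routine substitutions once the correct exponents of~$k$ are tracked.
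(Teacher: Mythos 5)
Your proposal is correct and follows exactly the route the paper intends: the paper offers no separate proof beyond the remark that the corollary is obtained ``by combining Proposition~\ref{modular-cut-theorem} with the results of Section~\ref{subsec:mod-coloring},'' and your substitutions of Theorem~\ref{coloring-inc-exc}, Theorem~\ref{chromatic-del-cont}, \eqref{coloop-case}, and Corollary~\ref{chromatic-from-Tutte} into $\tau^*_X(k)=k^{m-n}|\H^d(X;\Zz_k)|^{-1}\chi^*_X(k)$ are precisely those computations, with the exponent bookkeeping done correctly. Your careful treatment of the coloop case (invariant factors preserved under the block reduction, hence $|\H^d(X;\Zz_k)|=|\H^d(X'';\Zz_k)|$) supplies detail the paper leaves implicit.
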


%%%%%%%%%%%%%%%%%%%%%%%%%%%%%%%%%%%%%%%%%%%%%%%%%%%%%%%%%%%%%%%%
\subsection{Modular Flows} \label{subsec:mod-flow}
%%%%%%%%%%%%%%%%%%%%%%%%%%%%%%%%%%%%%%%%%%%%%%%%%%%%%%%%%%%%%%%%

\begin{definition}
For $k\in\Nn$, let $\varphi_X^*(k)$ denote
the number of nowhere-zero $\Zz_k$-flows on $X$.
The function $\varphi^*_X:\Nn\to\Zz$ is called the \defterm{modular flow function} of~$X$.
\end{definition}

\begin{theorem}
\label{flow-deletion-contraction}
Suppose that $X$ is shrinkable via the ridge-facet pair $(r,f)$.  Let $X'=X\sm f$ and $X''=X/rf$.
Then
$$\varphi_X^*(k)=\varphi^*_{X''}(k)-\varphi^*_{X'}(k) \, .$$
Moreover, if $w$ is a flow on $X$, then $w|_{X/rf}$ is a flow on $X/rf$ and if $w'$ is a flow on $X/rf$ then there exists a flow $w$ on $X$ with $w|_{X/rf}=w'$. 
\end{theorem}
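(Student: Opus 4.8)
The plan is to deduce both the recurrence and the ``moreover'' clause from a single bijection between $\Flow_{\Zz_k}(X)$ and $\Flow_{\Zz_k}(X'')$, refined by tracking the deleted coordinate. I regard a $\Zz_k$-flow as a column vector $w=(w_j)_{j\in F}$ with $\bd w=0$, and after possibly reversing the orientation of the facet $f$ I assume $\bd_{rf}=1$; this replaces the column $\bd_{\cdot f}$ by its negative and negates the $f$-coordinate of every flow, so it is a bijection on nowhere-zero flows and leaves $\bd''$, $\bd'$, and all three counting functions unchanged.

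First I would settle the ``moreover'' clause by exhibiting restriction and extension as mutually inverse maps. Given a flow $w$ on $X$, set $w''=(w_j)_{j\in F\sm\{f\}}$; to see that $w''$ is a flow on $X''$, I compute from the pivot formula \eqref{pivot}, for each ridge $i\neq r$,
\[
(\bd'' w'')_i=\sum_{j\neq f}\bigl(\bd_{ij}-\bd_{if}\bd_{rj}\bigr)w_j=\sum_{j\neq f}\bd_{ij}w_j-\bd_{if}\sum_{j\neq f}\bd_{rj}w_j,
\]
and then substitute $\sum_{j\neq f}\bd_{ij}w_j=-\bd_{if}w_f$ and $\sum_{j\neq f}\bd_{rj}w_j=-w_f$, which are the row-$i$ and row-$r$ equations of $\bd w=0$; the two terms cancel. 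Conversely, given a flow $w''$ on $X''$ I would set $w_j=w''_j$ for $j\neq f$ and $w_f=-\sum_{j\neq f}\bd_{rj}w''_j$, the unique value forced by the row-$r$ equation, and run the same identity in reverse to verify every row-$i$ equation of $\bd w=0$. Since $w_f$ is determined by row $r$, restriction is injective, and these two maps are inverse to each other, so restriction is a bijection $\Flow_{\Zz_k}(X)\to\Flow_{\Zz_k}(X'')$; in particular it sends flows to flows and is surjective, which is exactly the ``moreover'' clause.

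Next I would extract the recurrence by splitting the flows on $X''$ according to whether the reconstructed coordinate $w_f$ vanishes. Under the bijection, a flow $w$ on $X$ is nowhere-zero precisely when its restriction $w''$ is nowhere-zero \emph{and} $w_f\neq0$, so $\varphi^*_X(k)$ equals the number of nowhere-zero flows $w''$ on $X''$ with $w_f\neq 0$. For the complementary case, if $w''$ is a nowhere-zero flow on $X''$ with $w_f=0$, then the zero-extended vector $w'=(w_j)_{j\neq f}=w''$ satisfies $\bd' w'=\bd w-\bd_{\cdot f}w_f=0$, hence is a flow on $X'$; because $X'$ and $X''$ share the facet set $F\sm\{f\}$, it is nowhere-zero on $X'$, and conversely any nowhere-zero flow on $X'$ has $\sum_{j\neq f}\bd_{rj}w'_j=0$ (its row-$r$ equation), so it extends by $w_f=0$ to such a $w''$. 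This bijection gives
\[
\varphi^*_{X''}(k)=\varphi^*_X(k)+\varphi^*_{X'}(k),
\]
which rearranges to the asserted recurrence $\varphi^*_X(k)=\varphi^*_{X''}(k)-\varphi^*_{X'}(k)$.

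The only genuine computations are the two pivot-formula verifications, which are dual to the right-multiplication calculation in the proof of Theorem~\ref{chromatic-del-cont} (colorings are row vectors multiplied by $\bd$ on the right, while flows are column vectors multiplied on the left). I do not anticipate a serious obstacle; the one step deserving care is the final identification of the $w_f=0$ flows on $X''$ with the flows on $X'$, where one must confirm both that ``nowhere-zero'' coincides for the two complexes (their facet sets agree) and that the forced reconstruction value for a zero-extended flow on $X'$ is indeed $0$ rather than merely making that flow lie in $\ker\bd''$.
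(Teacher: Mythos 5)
Your proposal is correct and follows essentially the same route as the paper: both identify flows on $X''$ with vectors $(w_j)_{j\neq f}$ admitting a unique lift coordinate $w_f$ determined by row $r$, and split them according to whether $w_f$ vanishes to get $\varphi^*_{X''}=\varphi^*_X+\varphi^*_{X'}$. Your write-up merely makes explicit, via the pivot formula \eqref{pivot}, the verifications that the paper compresses into the phrase ``$\sum_{i\neq f}w_i\bd_i\equiv 0 \bmod \bd_f$.''
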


\begin{proof}
Without loss of generality, assume that $\bd_{rf}=1$. 
By definition, a nowhere-zero $\Zz_k$-flow on the contraction $X''$ is a vector $(w_1,\dots,\widehat{w_f},\dots,w_m)$, with all $w_i\in\Zz_k\sm\{0\}$, such that
$$\sum_{i\neq f} w_i\bd_i = 0\mod\bd_f \, ;$$
that is, there is a unique scalar $w_f$ such that
$$w_f\bd_f = \sum_{i\neq f} w_i\bd_i \, .$$
If $w_f\neq 0$ then $(w_1,\dots,w_m)$ is a nowhere-zero flow on~$X$, while
if $w_f=0$ then $(w_1,\dots,\widehat{w_f},\dots,w_m)$ is a nowhere-zero flow on~$X'$.

Meanwhile, every nowhere-zero flow $w$ on $X$ or on $X'$ gives rise to a nowhere-zero flow on $X''$ (in the former case, by omitting $w_f$).
\end{proof}

If $X$ has no facets then $\varphi_X^*(k)=0$, and if $X$ has a loop $f$ then
\[
  \varphi_X^*(k) = k \, \varphi_{ X \setminus f }^* (k) \, . 
\]
Thus by induction on the number of facets, we have:
\begin{corollary}
If $X$ is ISH, then $\varphi_X^*(k)$ is a polynomial in~$k$.
\end{corollary}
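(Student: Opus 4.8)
The plan is to argue by induction on the number $m$ of facets of $X$, letting the recursive shape of Definition~\ref{def:ISH} drive the case analysis and letting the recurrences already in hand do all the work. The three ingredients I would invoke are: the fact that $\varphi^*_X(k)=0$ when $X$ has no facets; the loop reduction $\varphi^*_X(k)=k\,\varphi^*_{X\sm f}(k)$; and the deletion--contraction identity $\varphi^*_X(k)=\varphi^*_{X''}(k)-\varphi^*_{X'}(k)$ from Theorem~\ref{flow-deletion-contraction}, valid whenever $X$ is shrinkable via a pair $(r,f)$.

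I would take $m=0$ as the base case, where the no-facets fact immediately gives $\varphi^*_X(k)=0$, which is a polynomial. For $m\geq 1$ I would assume the result for all ISH complexes with fewer facets and split according to the two clauses of the ISH definition. If $\bd$ is the zero matrix, then every facet is a loop; peeling off one facet $f$ via the loop formula reduces to $X\sm f$, which still has zero boundary matrix (hence is ISH) and one fewer facet, so the inductive hypothesis makes $\varphi^*_{X\sm f}(k)$ a polynomial and multiplication by $k$ preserves this. Otherwise $\bd$ is nonzero and nonempty, so $X$ cannot satisfy the first clause of the definition and must therefore be shrinkable at some pair $(r,f)$ for which $X'=X\sm f$ and $X''=X/rf$ are both ISH. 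Each of these has $m-1$ facets, since contraction removes the facet $f$ along with the ridge $r$, so the inductive hypothesis applies to both, and Theorem~\ref{flow-deletion-contraction} expresses $\varphi^*_X(k)$ as their difference, which is again a polynomial. This closes the induction.

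I do not expect a genuine obstacle here: the substantive content lives in Theorem~\ref{flow-deletion-contraction}, and what remains is essentially bookkeeping. The one point I would be careful to check is that the case split faithfully exhausts the recursive definition---in particular, that when $\bd$ is nonzero and nonempty the shrinkable clause is forced, and that the complexes $X\sm f$ and $X/rf$ produced at each step are themselves ISH with strictly fewer facets, which is precisely what Definition~\ref{def:ISH} guarantees. With that verified, the three recurrences cover every ISH complex and the polynomiality propagates up the recursion.
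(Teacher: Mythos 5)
Your proof is correct and follows essentially the same route as the paper, which likewise establishes the corollary by induction on the number of facets using the no-facet base case, the loop reduction, and Theorem~\ref{flow-deletion-contraction}; you have simply made explicit the case analysis that the paper leaves implicit. (The only quibble, inherited from the paper's own statement, is that the loop factor should arguably be $k-1$ rather than $k$, but this has no bearing on polynomiality.)
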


\begin{theorem} \label{flow-inc-exc}
We have
\[\varphi_X^*(k)=\sum_{J\subseteq F} (-1)^{m-|J|} k^{|J|-\rho(J)} \gamma(X_J,k) \, .\]
In particular, the modular flow function $\varphi_X^*(k)$ is a quasipolynomial in~$k$
whose period divides the lcm of the set of all invariant factors of column-selected
submatrices of~$\bd$.

If $X$ has no zero columns and $\bd$ is SQU (for instance, if $X$ is TU), then
$$\varphi_X^*(k)=\sum_{J\subseteq F} (-1)^{m-|J|} k^{|J|-\rho(J)}.$$
In particular, in this case the modular flow function is a monic polynomial of degree~$m$, and
$$\varphi_X^*(k)=(-1)^{m-\rho} \,T_X(0,1-k)$$
where $T_X$ denotes the Tutte polynomial of $\MM(X)$.
\end{theorem}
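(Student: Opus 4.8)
The plan is to mirror the inclusion--exclusion argument that established Theorem~\ref{coloring-inc-exc} for the chromatic function, but now applied directly to the flow space rather than to colorings. First I would set up the complementary count: a $\Zz_k$-flow is a vector $w\in\ker(\bd\otimes\Zz_k)\subseteq(\Zz_k)^F$, and I want to count those $w$ whose support is all of $F$, i.e.\ that are nowhere zero. For each facet $f$, let $Z_f=\{w\in\ker(\bd\otimes\Zz_k):w_f=0\}$ be the set of flows vanishing on~$f$. Then the nowhere-zero flows are exactly the elements of $\ker(\bd\otimes\Zz_k)$ lying in none of the $Z_f$, so by inclusion--exclusion
\[
\varphi_X^*(k)=\sum_{S\subseteq F}(-1)^{|S|}\left|\bigcap_{f\in S}Z_f\right|.
\]

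The key step is to identify $\bigcap_{f\in S}Z_f$ with the flow space of a subcomplex. A flow $w\in\ker(\bd\otimes\Zz_k)$ that vanishes on every facet in~$S$ is precisely a flow supported on $J:=F\sm S$; that is, it is a vector in $\ker(\bd_J\otimes\Zz_k)$ (with the coordinates in~$S$ filled in by zero), which by Definition~\ref{defn-flow} is $\Flow_{\Zz_k}(X_J)$. Reindexing the sum by $J=F\sm S$ (so $|S|=m-|J|$ and $(-1)^{|S|}=(-1)^{m-|J|}$), I get
\[
\varphi_X^*(k)=\sum_{J\subseteq F}(-1)^{m-|J|}\,\bigl|\ker(\bd_J\otimes\Zz_k)\bigr|.
\]
Now I invoke formula~\eqref{count-kernel} applied to the matrix $\bd_J$, which has $|J|$ columns and rank $\rho(J)$: this gives $|\ker(\bd_J\otimes\Zz_k)|=k^{|J|-\rho(J)}\gamma(\bd_J,k)=k^{|J|-\rho(J)}\gamma(X_J,k)$, yielding the claimed formula exactly. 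The quasipolynomiality and the statement about the period then follow immediately, since each $\gamma(X_J,k)$ is a product of terms $\gcd(k,a_i)$ over the invariant factors $a_i$ of $\bd_J$, so the whole sum is periodic with period dividing the lcm of all those invariant factors --- just as in Theorem~\ref{coloring-inc-exc}.

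For the SQU case, the hypothesis that every column-selected submatrix is QU means precisely that every $\bd_J$ has all invariant factors equal to~$1$, so $\gamma(X_J,k)\equiv 1$ for all~$J$; the hypothesis of no zero columns guarantees there are genuinely nowhere-zero flows to count (no facet forces $w_f=0$ automatically). Substituting $\gamma(X_J,k)=1$ collapses the formula to $\sum_{J\subseteq F}(-1)^{m-|J|}k^{|J|-\rho(J)}$, which is a polynomial. Finally, to recognize this as a Tutte evaluation I would rewrite the exponent $|J|-\rho(J)$ as the nullity of~$J$ in the cellular matroid $\MM(X)$ and compare with the corank--nullity generating function for $T_X$; matching the sign $(-1)^{m-|J|}$ against the standard substitution gives $\varphi_X^*(k)=(-1)^{m-\rho}T_X(0,1-k)$, exactly parallel to the chromatic case in Corollary~\ref{chromatic-from-Tutte}. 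I expect the only subtle point to be the bookkeeping in the two reindexings --- first from $S$ to $J=F\sm S$, then matching $|J|-\rho(J)$ to the nullity in the matroid's generating function --- rather than any genuine conceptual obstacle, since the heart of the argument is the clean identification $\bigcap_{f\in S}Z_f=\Flow_{\Zz_k}(X_{F\sm S})$ together with the kernel-counting identity~\eqref{count-kernel}.
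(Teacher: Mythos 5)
Your proposal is correct and follows essentially the same route as the paper: inclusion--exclusion over subsets of facets, the identification of the flows vanishing outside $J$ with $\ker(\bd_J\otimes\Zz_k)$, the kernel-counting identity \eqref{count-kernel}, and the corank--nullity expansion of the Tutte polynomial for the SQU specialization. The only quibble is your parenthetical about the no-zero-columns hypothesis: a zero column is a \emph{loop}, which leaves $w_f$ unconstrained rather than forcing $w_f=0$ (it is a coloop that forces vanishing), but this aside plays no role in the derivation.
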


\begin{proof}
Fix $k\in\Nn$.  By inclusion--exclusion and~\eqref{count-kernel},
\begin{align*}
\varphi^*_X(k)
&=\sum_{J\subseteq F}(-1)^{m-|J|} |\{w\in\ker_{\Zz_k}\bd \st w_f=0 \text{ for all $f\in F\sm J$}\}|\\
&=\sum_{J\subseteq F}(-1)^{m-|J|} |\ker_{\Zz_k}\bd_J|\\
&=\sum_{J\subseteq F}(-1)^{m-|J|} k^{|J|-\rho(J)} \gamma(X_J,k)
\end{align*}
as desired.  The third assertion follows from the formula for the Tutte polynomial as a corank-nullity generating function \cite{BryOx}.
\end{proof}

\begin{remark}
The columns of $\bd$ represent an \emph{arithmetic matroid}
in the sense of d'Adderio and Moci \cite[\S3.2]{arith-matroid}.
The corresponding \emph{arithmetic Tutte polynomial} is
\[M_X(x,y)=\sum_{J\subseteq F} \left| \tor(\H^d(X_J;\Zz)) \right| (x-1)^{\rho-\rho(J)} (y-1)^{|J|-\rho(J)}\]
where $\tor$ denotes the torsion summand.  In \cite{arith-colorings},
the same authors specialize the arithmetic Tutte polynomial to
enumerate colorings and flows on certain generalized edge-weighted
graphs.  Our modular chromatic, tension and flow functions do not
appear to be recoverable from $M_X$, because the
quasipolynomial $\gamma(X_J,k)$ specifies the actual
group structure of $\tor(\H^d(X;\Zz))$ rather than just its
cardinality.  On the other hand, the modular chromatic and tension
functions can be obtained from the Tutte--Grothendieck class of the
arithmetic matroid represented by~$\bd$ \cite[\S7]{MatRing}.
\end{remark}

%%%%%%%%%%%%%%%%%%%%%%%%%%%%%%%%%%%%%%%%%%%%%%%%%%%%%%%%%%%%%%%%
\section{Integral Counting Functions and Reciprocity} \label{integral-section}

In this section, we extend various reciprocity results for integral counting polynomials defined on graphs to the more
general integral counting quasipolynomials defined on cell complexes. To this end, we use the general framework of
inside-out polytopes as developed by Beck and Zaslavsky \cite{inout,MR2274083}. In particular, the reciprocity theorem for
the integral chromatic quasipolynomial of a cell complex, Theorem~\ref{integral-coloring-reciprocity} below, generalizes Stanley's reciprocity theorem for the chromatic polynomial of a graph \cite{Stanley}. In this context, Proposition~\ref{regions-and-acyclic-orientations} generalizes Greene's geometric insight about acyclic orientations of graphs \cite{Greene}. The reciprocity theorem for the integral tension quasipolynomial, Theorem~\ref{integral-tension-reciprocity}, generalizes reciprocity results by Chen~\cite{Chen2010} and Dall~\cite{Dall}. Finally, the reciprocity theorem for the integral flow quasipolynomial, Theorem~\ref{integral-flow-reciprocity}, generalizes the integral flow reciprocity theorem by Beck and Zaslavsky~\cite{MR2274083}.

\subsection{Integral Colorings and Acyclic Orientations} \label{subsec:int-coloring}

Recall that an \defterm{integral $k$-coloring} of~$X$ is an element of~$(-k,k)^R\cap\Z^R$ (note that there
are $2k-1$ available colors for an integral $k$-coloring).  A coloring is \defterm{proper} if $c\, \bd$ is nowhere zero.

\begin{definition}
The \defterm{integral coloring function} of~$X$ is
$$\chi_X(k):= \text{number of proper integral $k$-colorings of $X$}.$$
If we regard a coloring $c$ as a lattice point in $\Zz^R$, then the proper $k$-colorings are precisely the
lattice points in $kC$, where $C$ is the inside-out polytope $(-1,1)^R\sm\HH_X$,
where $\HH_X$ is the arrangement of hyperplanes $\{H_f\st f\in F\}$ normal to the columns of $\bd$.
Therefore, the integral coloring function satisfies
\begin{equation} \label{coloring-Ehrhart}
\chi_X(k) = E_C(k) = (-1)^{n}\bar{E}_{C}(-k) \, .
\end{equation}
\end{definition}

\begin{definition}
Recall that an \defterm{orientation} of $X$ is a sign vector $\ori\in\{-1,1\}^F$.
We will abuse notation by also writing $\ori$ for the $m\x m$ diagonal matrix with entries $\ori_f$ for $f\in F$.
\defterm{Reorienting~$X$ by~$\ori$} means replacing $\bd$ with $\bd\ori$, i.e., multiplying each column of $\bd$ by the corresponding entry of~$\ori$.

Also recall that an orientation $\ori$ is \defterm{acyclic} if it admits no nontrivial nonnegative flows; that is, every nonzero element of
$\Flow(X,\ori)=\ker(\bd\ori)$ has at least one positive and at least one negative entry.
An equivalent condition is that $\bd\ori s\neq 0$ for every $s\in\Nn^F\sm\{0\}$.
\end{definition}

Let $c$ be an integral coloring and let $\ori$ be an orientation.  We say that the pair
$(c,\ori)$ is \defterm{compatible} if the vector $c \, \bd\ori$ has strictly positive entries, i.e.,
$\ori_f=1$ whenever $\sum_{r\in R} c_r \bd_{rf}>0$,
and $\ori_f=-1$ whenever $\sum_{r\in R} c_r \bd_{rf}<0$.
Every coloring has $2^a$ compatible orientations, where $a=|\{f\in F\st \sum_{r\in R} c_r \bd_{rf}=0\}|$.
In particular, a coloring is proper if and only if it has exactly one compatible orientation.

\begin{proposition}
\label{acyclic-orientations}
Let $c$ be a proper coloring of~$X$ and let $\psi=c \, \bd$ be the corresponding nowhere-zero tension (see
Proposition~\ref{coloring-cut}).   Define the orientation $\ori$ by
$$ \ori_f := \begin{cases} 1 & \text{ if } \psi_f>0,\\
-1 & \text{ if } \psi_f<0.\end{cases}$$
Then $\ori$ is acyclic.
\end{proposition}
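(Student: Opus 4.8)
The plan is to prove acyclicity directly from the equivalent characterization stated just above the proposition: $\ori$ is acyclic if and only if $\bd\ori\,s\neq0$ for every $s\in\Nn^F\sm\{0\}$. The idea is that the proper coloring $c$ furnishes a linear functional detecting any nonnegative flow of the reoriented complex, exploiting the fact that cuts and flows pair to zero. Thus the whole argument reduces to a single pairing computation, preceded only by recording the sign normalization built into the definition of $\ori$.

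First I would observe that, since $c$ is proper, the cut $\psi=c\,\bd$ is nowhere zero, and by construction $\ori_f=\sgn(\psi_f)$, so that $\ori_f\psi_f=|\psi_f|>0$ for every facet $f\in F$. Now suppose $s\in\Nn^F$ satisfies $\bd\ori\,s=0$. Left-multiplying by the row vector $c$ and regrouping the matrix products gives
\[
0=c\,(\bd\ori\,s)=(c\,\bd)\,\ori\,s=\psi\,\ori\,s=\sum_{f\in F}\ori_f\psi_f\,s_f=\sum_{f\in F}|\psi_f|\,s_f .
\]
Every summand $|\psi_f|\,s_f$ is nonnegative, being a product of $|\psi_f|>0$ and $s_f\ge0$, so the vanishing of the total forces $s_f=0$ for each $f$; that is, $s=0$. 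Hence $\bd\ori\,s\neq0$ for all nonzero $s\in\Nn^F$, which is exactly the assertion that $\ori$ is acyclic. (Equivalently, phrased in terms of $\ori$-nonnegative flows of $X$ itself: any $w\in\ker\bd$ satisfies $\psi\cdot w=c\cdot(\bd w)=0$, and the same sign argument applied to $\sum_{f}|\psi_f|\,(\ori_f w_f)=0$ shows $w=0$.)

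I do not anticipate a genuine obstacle: once the orthogonality of cuts and flows and the sign normalization $\ori_f=\sgn(\psi_f)$ are in hand, the proof is the one-line computation above. The only point demanding care is the sign bookkeeping—verifying that the choice of $\ori$ makes each term $\ori_f\psi_f\,s_f$ nonnegative rather than of indeterminate sign—so that the ``sum of nonnegative terms equal to zero'' step legitimately forces $s=0$. It is precisely here that the properness of $c$ (equivalently, that $\psi$ is nowhere zero, so that each $|\psi_f|$ is strictly positive) is used.
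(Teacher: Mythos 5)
Your proof is correct and is essentially the paper's own argument: both reduce acyclicity to showing $\bd\ori\,s\neq0$ for all $s\in\Nn^F\sm\{0\}$ and then pair against $c$, using that each term $\ori_f\psi_f s_f=|\psi_f|s_f$ is nonnegative and that $|\psi_f|>0$ by properness. The only difference is presentational (you argue contrapositively, the paper directly), so there is nothing to add.
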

\begin{proof}
It suffices to show that $\bd\ori s\neq 0$ for every $s\in\Nn^F\sm\{0\}$.
Indeed, for such $s$,
$$c \, \bd\ori s = \psi\cdot(\ori_fs_f)_{f\in F} = \sum_{f\in F} \psi_f\ori_fs_f \neq0$$
because $\psi_f\ori_f>0$ for every $f$, and it follows that $\bd\ori s\neq0$.
\end{proof}

\begin{proposition}
\label{regions-and-acyclic-orientations}
There is a bijection between regions of $\HH_X$ and acyclic orientations of~$X$.
\end{proposition}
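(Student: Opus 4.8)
The plan is to construct the bijection explicitly through sign vectors, and to identify its image with acyclic orientations by means of a theorem of the alternative. First I would define a map $\Phi$ from regions to $\{\pm1\}^F$ as follows: for a region $R_i^\circ$ and any point $c\in R_i^\circ$, set $\Phi(R_i^\circ)_f=\sgn(c\cdot\bd_f)$ for each $f\in F$. This is well defined because $c\cdot\bd_f$ vanishes exactly on the hyperplane $H_f$, so its sign is constant on each connected component of $\R^R\sm\bigcup_f H_f$, i.e., on each region; thus $\Phi(R_i^\circ)$ is a genuine orientation of $X$.

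Injectivity is a standard feature of hyperplane arrangements: for a full-support sign vector $\sigma$, the set $\{c\in\R^R\st \sgn(c\cdot\bd_f)=\sigma_f \text{ for all } f\}$ is an intersection of open half-spaces, hence convex and in particular connected. Consequently distinct regions carry distinct sign vectors, so $\Phi$ is injective. That the image of $\Phi$ consists of acyclic orientations is exactly Proposition~\ref{acyclic-orientations}, whose argument applies verbatim to any real point $c$ of the region: the induced orientation $\ori_f=\sgn(c\cdot\bd_f)$ is acyclic.

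The substance of the proof is surjectivity, and this is where I would invoke a theorem of the alternative. Fix an acyclic orientation $\ori$; I want to produce a point $c\in\R^R$ with $\ori_f(c\cdot\bd_f)>0$ for every $f$, equivalently a strict solution of $c\,(\bd\ori)>0$, since any such $c$ lies in a region mapping to $\ori$. By Gordan's theorem (see, e.g., \cite{Schrijver}), applied to the matrix $\bd\ori$, exactly one of the following has a solution: (i) a vector $c$ with $c\,(\bd\ori)>0$ componentwise; or (ii) a vector $w$ with $w\ge0$, $w\ne0$, and $(\bd\ori)\,w=0$. Alternative (ii) asserts precisely the existence of a nontrivial nonnegative flow for $\ori$, i.e., that $\ori$ is \emph{not} acyclic; since the flow cone $\{w\ge0\st\bd\ori\,w=0\}$ is rational, it contains a nonzero real vector if and only if it contains a nonzero integer vector, so this matches the integral definition of acyclicity. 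As $\ori$ is acyclic, (ii) fails, hence (i) holds, producing the desired $c$.

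Combining the three steps shows that $\Phi$ is a bijection from the regions of $\HH_X$ onto the acyclic orientations of $X$. I expect the only genuine obstacle to be the bookkeeping in the theorem of the alternative: matching the strict feasibility system that detects a region against the nonnegative-flow characterization of acyclicity, and verifying that the real and integral formulations of acyclicity coincide.
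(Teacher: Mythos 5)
Your proposal is correct and follows essentially the same route as the paper: well-definedness and injectivity via constancy of the sign vector on regions and convexity of open half-space intersections, acyclicity of the image via Proposition~\ref{acyclic-orientations}, and surjectivity via Gordan's theorem applied to $\bd\ori$. Your extra remark that the rational flow cone contains a nonzero real point iff it contains a nonzero integer point is a worthwhile explicit check that the paper leaves implicit when matching Gordan's real alternative to the integral definition of acyclicity.
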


\begin{proof}
Every open region $Q^\circ$ of $\R^R\setminus\bigcup_{f\in F} H_f$ is unbounded, hence contains some lattice point, which is a proper coloring
and therefore gives rise to an acyclic orientation by Proposition \ref{acyclic-orientations}.  Moreover,
all colorings in $Q^\circ$ give rise to the same acyclic orientation, denoted $\ori_Q$, in this way.  
If two regions are distinct, then they lie on opposite sides of at least one hyperplane~$H_f$, and so
the corresponding orientations will differ in (at least) their $f^{th}$ position.  Accordingly, it remains to show that
every acyclic orientation~$\ori$ of~$X$ has a compatible integral coloring.  This is a direct application of Gordan's
theorem from linear programming~\cite[Exercise~A.3, p.~334]{CombOpt} to the definitions of acyclic orientation and compatibility.
\end{proof}

\begin{theorem} \label{integral-coloring-reciprocity}
Let $X$ be an arbitrary cell complex.
The number of pairs $(\ori,c)$, where $\ori$ is an acyclic orientation of~$X$ and $c$ is a $k$-coloring
compatible with~$\ori$, equals $(-1)^{n}\chi_X(-k)$. In particular, the number of acyclic orientations of $X$ is
$(-1)^{n}\chi_X(-1)$.
\end{theorem}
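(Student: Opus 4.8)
The plan is to realize $(-1)^n \chi_X(-k)$ as the closed Ehrhart quasipolynomial $\bar{E}_C(k)$ via the reciprocity already recorded in \eqref{coloring-Ehrhart}, and then to reinterpret the multiplicity with which $\bar{E}_C$ counts each lattice point as the number of compatible acyclic orientations. Recall from \eqref{coloring-Ehrhart} that $\chi_X(k) = E_C(k) = (-1)^n \bar{E}_C(-k)$, where $C = (-1,1)^R \setminus \HH_X$ is the inside-out polytope and $\bar{E}_C(k) = \sum_{i=1}^s E_{R_i}(k)$ sums the \emph{closed} Ehrhart quasipolynomials of the closed regions $R_i$. Substituting $-k$ for $k$ gives immediately
$$(-1)^n \chi_X(-k) = \bar{E}_C(k),$$
so the entire content of the theorem reduces to giving a combinatorial reading of $\bar{E}_C(k)$ for positive integers $k$.

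Next I would unwind the combinatorial meaning of $\bar{E}_C(k)$. As noted in the discussion of inside-out polytopes, $\bar{E}_C(k)$ counts each lattice point $z \in \Z^R \cap k\bar{C}$ with multiplicity equal to the number of closed regions $R_i$ containing it (equivalently, $z \in kR_i$). A lattice point $z = c$ scaled by $k$ is precisely an integral $k$-coloring $c \in (-k,k)^R \cap \Z^R$; the closed regions of the arrangement correspond bijectively to acyclic orientations of $X$ by Proposition~\ref{regions-and-acyclic-orientations}, where the region $R_Q$ carries the orientation $\ori_Q$. The key step is to identify ``$c$ lies in the closed region $R_i$ whose associated acyclic orientation is $\ori$'' with ``$(c,\ori)$ is compatible,'' i.e., $c\,\bd\ori$ has nonnegative entries. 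On an open region $Q^\circ$ the coloring $c$ has $c\,\bd$ strictly respecting the sign pattern of $\ori_Q$ (all entries nonzero with the matching signs), and passing to the closure $R_Q$ relaxes the strict inequalities to $c\,\bd\ori_Q \ge 0$ — which is exactly the compatibility condition from the definition preceding Proposition~\ref{acyclic-orientations}. A point $c$ lies in several closed regions precisely when some entries of $c\,\bd$ vanish, and the number of closed regions containing it equals $2^a$ with $a = |\{f : (c\,\bd)_f = 0\}|$, matching the count of compatible orientations recorded there. Thus the multiplicity in $\bar{E}_C(k)$ at $c$ is exactly the number of acyclic orientations $\ori$ with $(c,\ori)$ compatible, and summing over all $k$-colorings $c$ yields the total number of compatible pairs $(\ori,c)$.

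Combining the two displays gives that the number of compatible pairs equals $\bar{E}_C(k) = (-1)^n \chi_X(-k)$, proving the first assertion. For the final sentence I would set $k=1$: an integral $1$-coloring must lie in $(-1,1)^R \cap \Z^R = \{0\}$, so $c = 0$ is the unique coloring, and then $c\,\bd = 0$ so \emph{every} orientation is compatible with $c$ and the compatible acyclic orientations are exactly all the acyclic orientations of $X$. Hence the number of pairs is the number of acyclic orientations, which equals $(-1)^n \chi_X(-1)$.

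The main obstacle I anticipate is the boundary bookkeeping in the multiplicity argument: one must be careful that the closed-region count $\bar{E}_C$ assigns to a coloring $c$ on a shared facet of several regions exactly matches the $2^a$ compatible orientations, and that no acyclic orientation with an empty-interior compatibility locus is missed or double-counted. This is handled cleanly by observing that each acyclic orientation corresponds to a unique closed region (Proposition~\ref{regions-and-acyclic-orientations}) and that membership $c \in kR_i$ is equivalent to the non-strict sign condition $c\,\bd\ori_i \ge 0$, so the bijection between ``closed regions containing $c$'' and ``orientations compatible with $c$'' is exact rather than merely an equality of cardinalities.
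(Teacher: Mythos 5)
Your proposal is correct and follows essentially the same route as the paper's proof: both rewrite $(-1)^n\chi_X(-k)$ as the closed Ehrhart quasipolynomial $\bar{E}_C(k)$ of the inside-out polytope and then identify the multiplicity of each lattice point (the number of closed regions containing it) with the number of compatible acyclic orientations via Proposition~\ref{regions-and-acyclic-orientations}. Your evaluation at $k=1$ for the second assertion is also the intended argument (the zero coloring lies in every closed region), so nothing is missing.
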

\begin{proof}
Equation~\eqref{coloring-Ehrhart} may be rewritten as
$$\bar{E}_C(k)= (-1)^{n} E_C(-k)=(-1)^{n}\chi_X(-k)$$
where $C$ is the inside-out polytope $(-1,1)^n\sm\bigcup\HH_X$.
Meanwhile, $E_C(k)$ is the number of pairs $(R,c)$, where $c$ is a $k$-coloring and $R$ is a closed region of $\HH_X$ such that $c\in R$.  Furthermore, for each region $R$, the lattice points in $R^\circ$ are precisely the colorings compatible with the  orientation corresponding to~$R$, which implies the first assertion of the theorem.
The second assertion follows by setting $k=0$, because the only 1-coloring of $X$ is the zero vector,
which lies in every closed region.
\end{proof}

%%%%%%%%%%%%%%%%%%%%%%%%%%%%%%%%%%%%%%%%%%%%%%%%%%%%%%%%%%%%%%%%
\subsection{Integral Tensions and Acyclic Orientations} \label{subsec:int-ten}

Recall that the tension space $\Ten(X)$ is the orthogonal complement
of the flow space $\Flow(X)=\ker\bd$.  In particular, the tension
space is a saturated $\Zz$-submodule of $\Zz^F$ (i.e., if
some nonzero scalar multiple of $\psi$ is a tension, then so is~$\psi$),
which is equivalent to the statement that $\Ten_\Zz(X)$ is a summand
of $\Zz^F$.

Recall also that an integral tension
(resp., a $k$-tension) is a tension $\psi\in\Zz^F$ (resp., a tension $\psi\in(-k,k)^{F}\cap\Zz^F$).

\begin{definition}
The \defterm{integral tension function} of~$X$ is
$$\tau_X(k):= \text{number of nowhere-zero $k$-tensions of $X$}.$$
\end{definition}

Let $\BB$ be the Boolean arrangement of coordinate hyperplanes in~$\Rr^F$. Define the inside-out polytope
$$T := \left((-1,1)^{F}\cap\Ten(X)\right)\sm \BB \, .$$
Then the nowhere-zero $k$-tensions are precisely the lattice points in~$kT$. 

If $X$ has a loop $f$, then $\Ten(X)$ is contained in the corresponding coordinate hyperplane.  In this case, there are no nowhere-zero
tensions; the inside-out polytope $T$ is empty; $\tau_X(k) = 0$; and~$X$ has no acyclic orientations.
We assume henceforth that $X$ has no loops (but note that Proposition
\ref{bijectiontensionregions} and Theorem \ref{integral-tension-reciprocity} below still hold trivially if $X$ has loops).

We have
\[
\dim(T) = \dim(\Ten(X)) = m - \dim(\Flow(X)) = \rho.
\]
Accordingly, the closed and open Ehrhart quasipolynomials of $T$
satisfy
\begin{equation} \label{tension-Ehrhart}
\tau_X(k) = E_{T}(k) = (-1)^{\rho}\bar{E}_{T}(-k)
\end{equation}
where the second equality comes from Ehrhart--Macdonald reciprocity. 

A tension $\psi$ and an orientation $\ori$ are called
\defterm{compatible} if, for all facets $f$, we have $\ori_f\psi_f\geq 0$.
That is, $\ori_f=1$ whenever $\psi_f>0$, and $\ori_f=-1$ whenever
$\psi_f<0$.  Note that every tension $\psi$ has $2^a$ compatible
orientations, where $a=|\{f\st \psi_f=0\}|$.  In particular, a tension
is nowhere zero if and only if it has exactly one compatible
orientation.

\begin{proposition}\label{bijectiontensionregions}
There is a bijection between regions of $\Ten(X)\sm\BB$ and acyclic orientations of~$X$.
\end{proposition}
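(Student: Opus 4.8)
The plan is to exhibit the sign map from regions to orientations and show it is a well-defined bijection onto the acyclic ones, paralleling the proof of Proposition~\ref{regions-and-acyclic-orientations}. Since $\Ten(X)$ is a linear subspace of $\Rr^F$ and $\BB$ is the coordinate-hyperplane arrangement, each region $Q^\circ$ of $\Ten(X)\sm\BB$ is a relatively open polyhedral cone on which every coordinate $\psi_f$ is nonzero of constant sign. First I would define $\ori_Q:=(\sgn\psi_f)_{f\in F}$ for any $\psi\in Q^\circ$; this is exactly the unique orientation compatible with the nowhere-zero tensions lying in $Q^\circ$. The claim is that $Q^\circ\mapsto\ori_Q$ is the asserted bijection.

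Well-definedness is immediate because the sign vector is locally constant on $\Rr^F\sm\BB$, hence constant on the connected set $Q^\circ$. For injectivity, note that two regions carrying the same orientation $\ori$ both lie in the set $\{\psi\in\Ten(X)\st\ori_f\psi_f>0 \text{ for all }f\}$, which is convex, hence connected; since a region is a maximal connected component, the two regions must coincide. The content of the proposition is therefore the surjectivity onto acyclic orientations: I must show that the region with sign pattern $\ori$ is nonempty if and only if $\ori$ is acyclic.

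This surjectivity step is the main obstacle, and I would handle it by a theorem of the alternative, just as in Proposition~\ref{regions-and-acyclic-orientations}. The region with sign $\ori$ is nonempty exactly when there is a strictly $\ori$-positive tension $\psi\in\Ten(X)$. Over the field $\Rr$ we have $\Ten_\Rr(X)=\Cut_\Rr(X)=\{c\,\bd\st c\in\Rr^R\}$ (Propositions~\ref{cuts-equal-tensions-mod-n} and~\ref{coloring-cut}), so this condition reads: there exists $c$ with $c\,\bd\ori$ strictly positive in every coordinate. On the other hand, $\ori$ is acyclic precisely when $X$ has no nonzero $\ori$-nonnegative flow; substituting $u=\ori w$ (so $w=\ori u$ and $\partial w=(\bd\ori)u$) turns this into the nonexistence of $u\in\Nn^F\sm\{0\}$ with $(\bd\ori)u=0$. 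Applying Gordan's theorem to the matrix $\bd\ori$, these are precisely the two mutually exclusive, jointly exhaustive alternatives, so exactly one holds; hence the region is nonempty if and only if $\ori$ is acyclic, completing the bijection. Throughout I would use the standing loop-free hypothesis to rule out identically zero coordinates, ensuring that the full-support sign vectors (topes) of $\Ten(X)\sm\BB$ are well-behaved.

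Alternatively, one can bypass Gordan entirely and reduce directly to Proposition~\ref{regions-and-acyclic-orientations}: the linear map $c\mapsto c\,\bd$ carries $\Rr^R$ onto $\Ten_\Rr(X)$ and sends each region of $\HH_X$ onto a region of $\Ten(X)\sm\BB$ with the same sign vector, so the two arrangements realize the same full-support orientations. Since those are exactly the acyclic orientations by Proposition~\ref{regions-and-acyclic-orientations}, the bijection follows. I would likely present the Gordan argument as primary, since it keeps the proof self-contained and structurally parallel to the integral-coloring case.
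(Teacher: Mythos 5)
Your proposal is correct and matches the paper's argument: the paper also uses the sign map on regions, gets acyclicity from Proposition~\ref{acyclic-orientations}, and handles surjectivity exactly as in your ``alternative'' route, by taking a compatible proper coloring $c$ from Proposition~\ref{regions-and-acyclic-orientations} and pushing it forward to the nowhere-zero tension $\psi=c\,\bd$ (your primary Gordan argument is just an inlining of that proposition's proof). No substantive differences.
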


\begin{proof}
Every open region $R^\circ$ of $\Ten(X)\sm\BB$ is unbounded, hence it contains some lattice point, which is a nowhere-zero tension and therefore gives rise to an acyclic orientation by Proposition~\ref{acyclic-orientations}.  Moreover,
all tensions in $R^\circ$ give rise to the same acyclic orientation~$\ori_R$ in this way.  If two
regions are distinct, then they lie on opposite sides of at least one coordinate hyperplane, and so the corresponding orientations will differ in that coordinate.  Accordingly, it remains to show that every acyclic orientation~$\ori$ of~$X$ has a compatible nowhere-zero tension. By Proposition~\ref{regions-and-acyclic-orientations}, every acyclic orientation $\ori$ has a compatible proper coloring $c$. Then by the definition of compatibility $\psi = c \, \bd_X$ is a nowhere-zero tension compatible with~$\ori$.
\end{proof}

\begin{theorem} \label{integral-tension-reciprocity}
Let $X$ be an arbitrary cell complex. The number of pairs
$(\ori,\psi)$, where $\ori$ is an acyclic orientation of~$X$ and
$\psi$ is a $k$-tension compatible with~$\ori$, equals
$(-1)^{\rho}\tau_X(-k)$. In particular, the number of acyclic
orientations of $X$ is $(-1)^{\rho}\tau_X(-1)$.
\end{theorem}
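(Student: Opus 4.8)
The plan is to run the argument of Theorem~\ref{integral-coloring-reciprocity} again, replacing the coloring inside-out polytope by the tension inside-out polytope $T=\bigl((-1,1)^{F}\cap\Ten(X)\bigr)\sm\BB$. Two ingredients are already available: the Ehrhart identity \eqref{tension-Ehrhart}, which packages Ehrhart--Macdonald reciprocity as $\tau_X(k)=E_T(k)=(-1)^{\rho}\bar{E}_T(-k)$, and the bijection of Proposition~\ref{bijectiontensionregions} between the regions of $\Ten(X)\sm\BB$ and the acyclic orientations of $X$. We may assume $X$ is loopless, the looped case being trivial as noted above.

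First I would rewrite \eqref{tension-Ehrhart} as $\bar{E}_T(k)=(-1)^{\rho}\tau_X(-k)$ and unpack the left-hand side. By definition $\bar{E}_T(k)=\sum_i E_{R_i}(k)$, so for a positive integer $k$ it counts pairs $(R_i,\psi)$ in which $\psi$ is a lattice point of $kR_i$; because $\Ten(X)$ is a saturated sublattice of $\Zz^{F}$, these lattice points are exactly integer tensions. Let $\ori=\ori_{R_i}$ be the acyclic orientation attached to $R_i$ by Proposition~\ref{bijectiontensionregions}. The open region $R_i^\circ$ is the sign cone $\{\psi\in\Ten(X)\st \ori_f\psi_f>0\ \text{for all }f\}$ meeting the box, so its closure $R_i$ is cut out by the weak inequalities $\ori_f\psi_f\ge 0$; hence a tension $\psi$ lies in $kR_i$ precisely when it is compatible with $\ori$. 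Summing over all regions---and recalling that, by the bijection, the regions are indexed by exactly the acyclic orientations---shows that $\bar{E}_T(k)$ equals the number of pairs $(\ori,\psi)$ with $\ori$ acyclic and $\psi$ a $k$-tension compatible with $\ori$. Combined with $\bar{E}_T(k)=(-1)^{\rho}\tau_X(-k)$, this is the first assertion.

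For the second assertion I would set $k=1$ in the first assertion. The only $1$-tension is the zero vector, which satisfies $\ori_f\cdot 0\ge 0$ for every $f$ and is therefore compatible with every orientation, so the pairs $(\ori,0)$ are indexed exactly by the acyclic orientations, giving their number as $(-1)^{\rho}\tau_X(-1)$---just as the only $1$-coloring is the zero vector in the proof of Theorem~\ref{integral-coloring-reciprocity}. The step that demands the most care is the middle one: one must verify that membership of $\psi$ in the \emph{closed} region $kR_i$ is governed by the non-strict inequalities $\ori_f\psi_f\ge 0$, so that a tension lying on some coordinate hyperplanes is counted once for each of the acyclic orientations compatible with it, and that the bijection of Proposition~\ref{bijectiontensionregions} keeps non-acyclic orientations out of the sum so that no spurious pairs are introduced.
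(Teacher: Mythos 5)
Your argument is exactly the paper's: the paper proves Theorem~\ref{integral-tension-reciprocity} by declaring the argument identical to that of Theorem~\ref{integral-coloring-reciprocity}, and you have carried out precisely that transposition --- Ehrhart--Macdonald reciprocity packaged as \eqref{tension-Ehrhart}, the region/acyclic-orientation bijection of Proposition~\ref{bijectiontensionregions}, the identification of lattice points of the closed regions with compatible tensions, and the specialization $k=1$ for the count of acyclic orientations. The one caveat, which your proof shares with the paper's own treatment and which sits exactly at the step you flagged as delicate, is that a lattice point of the closed dilate $kR_i$ has coordinates in $[-k,k]$ rather than $(-k,k)$, so $\bar{E}_T(k)$ literally counts compatible $(k+1)$-tensions rather than $k$-tensions; this off-by-one in the box constraint does not affect the method but does need to be reconciled with the precise wording of the statement (e.g.\ by checking it on a single edge, where $\tau_X(k)=2k-2$, there are two acyclic orientations, and $(-1)^{\rho}\tau_X(0)$ rather than $(-1)^{\rho}\tau_X(-1)$ returns the value $2$).
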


The argument is the same as that in the proof of Theorem~\ref{integral-coloring-reciprocity}.

%%%%%%%%%%%%%%%%%%%%%%%%%%%%%%%%%%%%%%%%%%%%%%%%%%%%%%%%%%%%%%%%
\subsection{Integral Flows and Totally Cyclic Orientations} \label{subsec:int-flow}

Recall that a $k$-flow is an element of the flow space with integer entries strictly between $-k$ and $k$, i.e., a $k$-flow is an element $w\in\Flow(X)\cap(-k,k)^F\cap\Zz^F$.

\begin{definition}
The \defterm{integral flow function} of~$X$ is
$$\varphi_X(k):= \text{number of nowhere-zero $k$-flows of $X$}.$$
\end{definition}

As before, let $\BB$ be the Boolean arrangement of coordinate hyperplanes in~$\Rr^F$. Define the inside-out polytope 
$$W := \left((-1,1)^{F}\cap\Flow(X)\right)\sm \BB \, .$$
Then the nowhere-zero $k$-flows are precisely the lattice points in~$kW$. 

If $\bd_X$ has a column that is linearly independent from all others (i.e., the matroid represented by $X$ has a coloop), then $\Flow(X)$ is contained in the corresponding coordinate hyperplane.  In this case $W=\0$ and
$\varphi_X(k) = 0$, and $X$ has no totally cyclic orientations (and
Theorem~\ref{integral-flow-reciprocity} below is trivially true).
Henceforth, we assume that $X$ has no coloops, in which case
\[
\dim(W) = \dim(\Flow(X)) = m - \rho \, .
\]
Accordingly, the closed and open Ehrhart quasipolynomials of $W$ satisfy
\[
\varphi_X(k) =  E_W(k) = (-1)^{m-\rho}\bar{E}_W(-k)
\]
where the second equality comes from Ehrhart--Macdonald reciprocity.

Every lattice point in~$W$ gives rise to a totally cyclic orientation
(see Definition~\ref{defn-flow}), namely the vector
of signs of its coordinates.  Moreover,
lattice points in the same region of~$W$ give rise to the same totally cyclic orientation.  On the other
hand, for any totally cyclic orientation $\ori$, there is a strictly $\ori$-positive flow that is a lattice point
in $W$, and all $\ori$-positive flows belong to the same region of~$W$. In summary we obtain a result analogous to Proposition~\ref{bijectiontensionregions}.

\begin{proposition}\label{bijectionflowregions}
There is a bijection between regions of $\Flow(X)\sm\BB$ and totally cyclic orientations of~$X$.
\end{proposition}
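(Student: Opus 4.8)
The plan is to mirror the structure of the proof of Proposition~\ref{bijectiontensionregions}, exploiting the parallel roles played by the tension space $\Ten(X)$ in that statement and the flow space $\Flow(X)$ here, and by nowhere-zero tensions versus nowhere-zero flows. First I would observe that every open region $R^\circ$ of $\Flow(X)\sm\BB$ is unbounded (being a nonempty open cone-like region cut out of a linear subspace by coordinate hyperplanes), so it contains a lattice point; such a lattice point is a nowhere-zero flow, and its sign vector is a totally cyclic orientation. The map $R^\circ \mapsto \ori_{R^\circ}$ is well-defined because any two points of a single open region have the same coordinate signs, so they induce the same orientation.

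Next I would check injectivity exactly as before: if two regions $R_1^\circ, R_2^\circ$ are distinct, they are separated by at least one coordinate hyperplane $\{x_f = 0\}$, so points in them disagree in the sign of their $f$th coordinate, whence $\ori_{R_1^\circ}$ and $\ori_{R_2^\circ}$ differ in position~$f$. For surjectivity, I would appeal to the discussion immediately preceding the statement: given a totally cyclic orientation $\ori$, the equivalent condition recorded in the paragraph after Definition~\ref{defn-flow} guarantees a strictly $\ori$-positive flow $w$, which is a nonzero point of $\Flow(X)$ with all coordinates nonzero, hence lies in some open region of $\Flow(X)\sm\BB$; its sign vector is exactly $\ori$. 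Thus every totally cyclic orientation is hit, completing the bijection.

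The one step that needs genuine care, rather than routine transcription, is the surjectivity direction, specifically producing a \emph{strictly} $\ori$-positive flow from the definition of totally cyclic. Here I would invoke the averaging remark already proved in the text: if for each facet $f_i$ there is an $\ori$-nonnegative flow $w^{(i)}$ with $\ori_i w^{(i)}_i > 0$, then $\sum_i \ori_i w^{(i)}$ (equivalently, the appropriate nonnegative combination) is strictly $\ori$-positive. This is the crux, since a merely $\ori$-nonnegative flow could vanish on some coordinates and fail to land in $\Flow(X)\sm\BB$. Once strict positivity is secured, the region containing that flow is well-defined and maps back to $\ori$, and everything else is the same formal argument as in Proposition~\ref{bijectiontensionregions}. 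Since the text explicitly states ``In summary we obtain a result analogous to Proposition~\ref{bijectiontensionregions},'' I would keep the write-up terse, citing the parallel proof and highlighting only the totally-cyclic-to-strictly-positive-flow passage as the point of substance.
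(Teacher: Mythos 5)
Your proposal is correct and follows essentially the same route as the paper: the paper's justification is exactly the paragraph preceding the statement, which assigns to each region the common sign vector of its (nowhere-zero) lattice points and, for surjectivity, uses the fact that every totally cyclic orientation admits a strictly $\ori$-positive flow via the summing argument recorded after the definition of totally cyclic. Your identification of that strict-positivity step as the only point of substance matches the paper's treatment, so nothing further is needed.
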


We say that a $k$-flow $w$ and a totally cyclic orientation $\ori$ of a cell complex $X$ are \defterm{compatible} if $\ori w\geq 0$.

\begin{theorem} \label{integral-flow-reciprocity}
Let $X$ be an arbitrary cell complex. The number of pairs
$(\ori,w)$, where $\ori$ is a totally cyclic orientation of~$X$ and
$w$ is a $k$-flow compatible with~$\ori$, equals
$(-1)^{m-\rho}\varphi_X(-k)$. In particular, the number of totally cyclic
orientations of $X$ is $(-1)^{m-\rho}\varphi_X(-1)$.
\end{theorem}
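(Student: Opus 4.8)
The plan is to mirror the structure of the proof of Theorem~\ref{integral-coloring-reciprocity}, now using the flow inside-out polytope $W$ in place of the coloring polytope $C$. The starting point is the Ehrhart--Macdonald reciprocity already recorded above, namely $\varphi_X(k) = E_W(k) = (-1)^{m-\rho}\bar E_W(-k)$, valid under the standing assumption that $X$ has no coloops (the coloop case being trivial). Writing $\bar E_W(k) = \sum_{i=1}^s E_{R_i}(k)$ as a sum over the closed regions $R_i$ of $W$, I would interpret $\bar E_W(k)$ combinatorially: for a positive integer $k$, it counts pairs $(R_i, w)$ where $w$ is a lattice point of $\Flow(X)\cap(-k,k)^F$ lying in the closed region $R_i$. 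The task is then to re-index this count by orientations rather than by regions.

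The key geometric input is Proposition~\ref{bijectionflowregions}, which furnishes a bijection $R_i \leftrightarrow \ori_{R_i}$ between the regions of $\Flow(X)\sm\BB$ and the totally cyclic orientations of~$X$. First I would observe that, for a fixed region $R_i$ with associated totally cyclic orientation $\ori = \ori_{R_i}$, the lattice points $w$ in the closed polytope $kR_i$ are exactly the $k$-flows satisfying $\ori w \geq 0$; that is, exactly the $k$-flows compatible with~$\ori$ in the sense defined just above the theorem statement. This is because $R_i$ is the closure of the open region on which every coordinate has a fixed sign matching $\ori$, so a lattice point lies in the closure precisely when each coordinate is either zero or of the prescribed sign. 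Consequently $E_{R_i}(k)$ counts the $k$-flows compatible with $\ori_{R_i}$, and summing over $i$ gives that $\bar E_W(k)$ equals the total number of pairs $(\ori, w)$ with $\ori$ totally cyclic and $w$ a compatible $k$-flow.

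Combining this interpretation with the reciprocity identity yields the first assertion: the number of such pairs is $\bar E_W(k) = (-1)^{m-\rho}\varphi_X(-k)$. For the second assertion I would specialize to $k=1$. The only $1$-flow is the zero vector $w=0$, which satisfies $\ori\cdot 0 \geq 0$ for every orientation, hence is compatible with every totally cyclic orientation and lies in every closed region $R_i$. Therefore the pairs $(\ori, 0)$ are in bijection with the totally cyclic orientations themselves, giving $(-1)^{m-\rho}\varphi_X(-1)$ as their count.

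The step I expect to require the most care is the region-to-compatibility identification, specifically the claim that the lattice points of $kR_i$ (the \emph{closed} dilated region) are exactly the compatible $k$-flows. One must check that no lattice point of $W$ lies in two different closed regions in a way that would cause undercounting relative to $\bar E_W$, and conversely that the closure genuinely picks up the flows with some vanishing coordinates; but since distinct totally cyclic orientations differ in at least one coordinate sign and the closed regions overlap only on the coordinate hyperplanes, a compatible flow with support pattern matching $\ori$ lies in $R_{\ori}$ and in no other region whose orientation disagrees on a nonzero coordinate, while flows with additional zero coordinates are correctly shared among the several regions they border --- which is exactly the multiplicity built into the definition of $\bar E_W$. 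Apart from this bookkeeping, the argument is a direct transcription of the coloring case, as the sentence following the theorem statement in the analogous tension result already signals.
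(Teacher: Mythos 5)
Your route is exactly the paper's: the published proof of this theorem consists of the single remark that the argument is the same as that of Theorem~\ref{integral-coloring-reciprocity}, and your transcription of that argument to the flow polytope $W$ --- Ehrhart--Macdonald reciprocity applied region by region, the region--orientation bijection of Proposition~\ref{bijectionflowregions}, and the specialization to small $k$ --- is precisely the intended proof.

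That said, the step you single out as delicate does hide a genuine problem, and it is not the one you address. The multiplicity bookkeeping across adjacent regions is fine; that is exactly what $\bar E_W$ is defined to record. The real issue is that the closed region attached to a totally cyclic orientation $\ori$ is $R_\ori=\{w\in\Flow_{\R}(X): 0\le \ori_f w_f\le 1 \text{ for all } f\}$, so the lattice points of $kR_\ori$ are the integer flows with entries in the \emph{closed} interval $[-k,k]$ satisfying $\ori w\ge 0$. These are not the compatible $k$-flows, which by definition have entries in the open interval $(-k,k)$; the difference consists of compatible flows with some coordinate equal to $\pm k$, and it does not vanish. Concretely, for a complex with two parallel facets (say $\bd=\left[\begin{smallmatrix}1&1\\-1&-1\end{smallmatrix}\right]$, so $m=2$, $\rho=1$) one has $\varphi_X(k)=2k-2$ and hence $(-1)^{m-\rho}\varphi_X(-k)=2k+2=\bar E_W(k)$, while the number of pairs $(\ori,w)$ with $w$ a compatible $k$-flow is only $2k$; at $k=1$ the formula returns $4$ against the $2$ totally cyclic orientations. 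So your assertion that the lattice points of $kR_i$ are exactly the compatible $k$-flows is false as written. This slippage is inherited from the paper, whose statement and proof of Theorem~\ref{integral-coloring-reciprocity} have the same defect; the repair is to count compatible integer flows with $|w_f|\le k$ on the left-hand side, and to extract the number of totally cyclic orientations by evaluating at $k=0$ (where $\bar E_W(0)$ equals the number of regions) rather than at $k=1$. With those adjustments your argument is complete and coincides with the paper's.
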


Again, the argument is the same as that in the proof of Theorem~\ref{integral-coloring-reciprocity}.

%%%%%%%%%%%%%%%%%%%%%%%%%%%%%%%%%%%%%%%%%%%%%%%%%%%%%%%%%%%%%%%%
\section{Modular Reciprocity} \label{modular-reciprocity-section}

In this section, we generalize various reciprocity theorems for modular counting polynomials defined in terms of graphs to
the case of modular counting quasipolynomials defined in terms of cell complexes. To this end, we follow the geometric
approach developed by Breuer and Sanyal~\cite{breuersanyal}, by realizing the counting functions as Ehrhart
quasipolynomials of a disjoint union of polytopes. This approach makes important use of the correspondence between regions
and orientations that we developed for the integral case in the previous section. In particular, the modular flow
reciprocity theorems, Theorem~\ref{modular-flow-reciprocity-theorem} and
Corollary~\ref{modular-flow-reciprocity-for-shrinkable-complexes} below, generalize the modular flow reciprocity theorem for graphs by Breuer and Sanyal~\cite{breuersanyal}. See Chen and Stanley~\cite{chenstanley} for a related reciprocity theorem. The reciprocity theorems for modular tensions, Theorem~\ref{modular-tension-reciprocity} and Corollary~\ref{modular-tension-reciprocity-for-shrinkable-complexes}, generalize reciprocity results by Chen~\cite{Chen2010} and Breuer and Sanyal~\cite{breuersanyal}. The reciprocity theorem for modular colorings, Theorem~\ref{modular-coloring-reciprocity}, generalizes the classic reciprocity for the chromatic polynomial by Stanley~\cite{Stanley}; see also Beck and Zaslavsky~\cite{inout}.

In order to obtain reciprocity theorems for the modular flow and tension
functions, we again interpret them as lattice points in inside-out polytopes, 
following the approach in \cite{breuersanyal}.

If $v=(v_1,\dots,v_k)$ is a vector, we write
\[\supp(v)=\{i\st v_i\neq0\},\qquad
  \zero(v)=\{i\st v_i=0\}.\]

\subsection{Modular Flow Reciprocity} \label{subsec:mod-flow-rec}

Recall that
the \defterm{modular flow function}
$\varphi_X^*(k)$ counts the nowhere-zero $\Z_k$-flows of $X$.
For an integer vector $b\in\Z^R$, define
\[
P_\bd(b):=\left\{w\in[0,1]^F:\bd w=b\right\},\qquad
P_\bd^\circ(b):=\left\{w\in(0,1)^F:\bd w=b\right\}.
\]
We call $b$ \emph{feasible} if $P_\bd^\circ(b)\neq\0$.  Let $\BBB=\BBB(\bd)$
denote the set of all feasible vectors, and define
\[
P_\bd:=\bigsqcup_{b\in \BBB}P_\bd(b) \, ,\qquad
P_\bd^\circ:=\bigsqcup_{b\in \BBB}P_\bd^\circ(b) \, .
\]
Then the modular flow function of $X$ is the Ehrhart function of $P_\bd^\circ$, that is,
$$\varphi_X^*(k) = E_{P_\bd^\circ}(k) = \left|\Z^F\cap k P_\bd^\circ\right|.$$  
Note that for all $b\in\BBB$ the dimension of $P^\circ_\bd(b)$ is $m-\rho$, since the intersection of a translate of $\Flow_R(X)$ with the open set $(0,1)^F$ has the same dimension as $\Flow_R(X)$. Therefore, we can apply Macdonald--Ehrhart reciprocity to obtain
\begin{align*}
(-1)^{m-\rho} E_{P^\circ_\bd}(-k)
&= \sum_{b\in \BBB} (-1)^{m-\rho} E_{P^\circ_\bd(b)}(-k) \\
&= \sum_{b\in \BBB}E_{P_\bd(b)}(k) \\
&= E_{P_\bd}(k)
\end{align*}
which gives us
\begin{equation}
\label{eqn:mod-flow-reciprocity}
(-1)^{m-\rho}\varphi_X^*(-k)
= \left|\Zz^F \cap kP_\bd \right|.
\end{equation}

Note that if $C$ is an $n\x n$ invertible integer matrix, then
\begin{align*}
\BBB(C^{-1}\bd)
&= \left\{b\in\Zz^R \st \{w\in(0,1)^F \st C^{-1}\bd w=b\}\neq\0\right\}\\
&= \left\{b\in\Zz^R \st \{w\in(0,1)^F \st \bd w=Cb\}\neq\0\right\}\\
&= C^{-1}\left\{a\in\Zz^R \st \{w\in(0,1)^F \st \bd w=a\}\neq\0\right\}\\
&= C^{-1}\BBB(\bd)
\end{align*}
and therefore
\begin{equation} \label{change-of-basis-flow}
P_{C^{-1}\bd}
= \bigsqcup_{b\in\BBB(C^{-1}\bd)} P_{C^{-1}\bd}(b)
= \bigsqcup_{a\in\BBB(\bd)} P_{C^{-1}\bd}(C^{-1}a)
= \bigsqcup_{a\in\BBB(\bd)} P_\bd(a) ~=~ P_\bd \, ,
\end{equation}
so we will henceforth drop the map $\bd$ from the notation.

\begin{example} \label{modular-flow-example}
Let $\bd=\begin{bmatrix}1&2\\2&4\end{bmatrix}$.  Then
\begin{align*}
P(0,0)&=\{(0,0)\} \, ,&
P(1,2)&=\calL\big((1,0),(0,\textstyle\frac12)\big) \, ,\\
P(2,4)&=\calL\big((1,\textstyle\frac12),(0,1)\big) \, ,&
P(3,6)&=\{(1,1)\} \, ,
\end{align*}
where $\calL(p,q)$ denotes the closed line segment from $p$ to $q$.
The feasible
vectors are $\BBB(\bd)=\{(1,2),(2,4)\}$.  The sets $P_\bd^\circ(b)$
are the thick line segments shown on the left in Figure~\ref{modular-flow-figure}. Note that these line segments are open, i.e., the endpoints are not included.
The number of interior lattice points in the $k^{th}$ dilate is
the quasipolynomial
\[
\varphi^*_X(k)=\begin{cases}
k-1 & \text{ if $k$ is odd},\\
k-2 & \text{ if $k$ is even}.
\end{cases}
\]
\begin{figure}[th]
  \includefigure{5.4in}{1.9in}{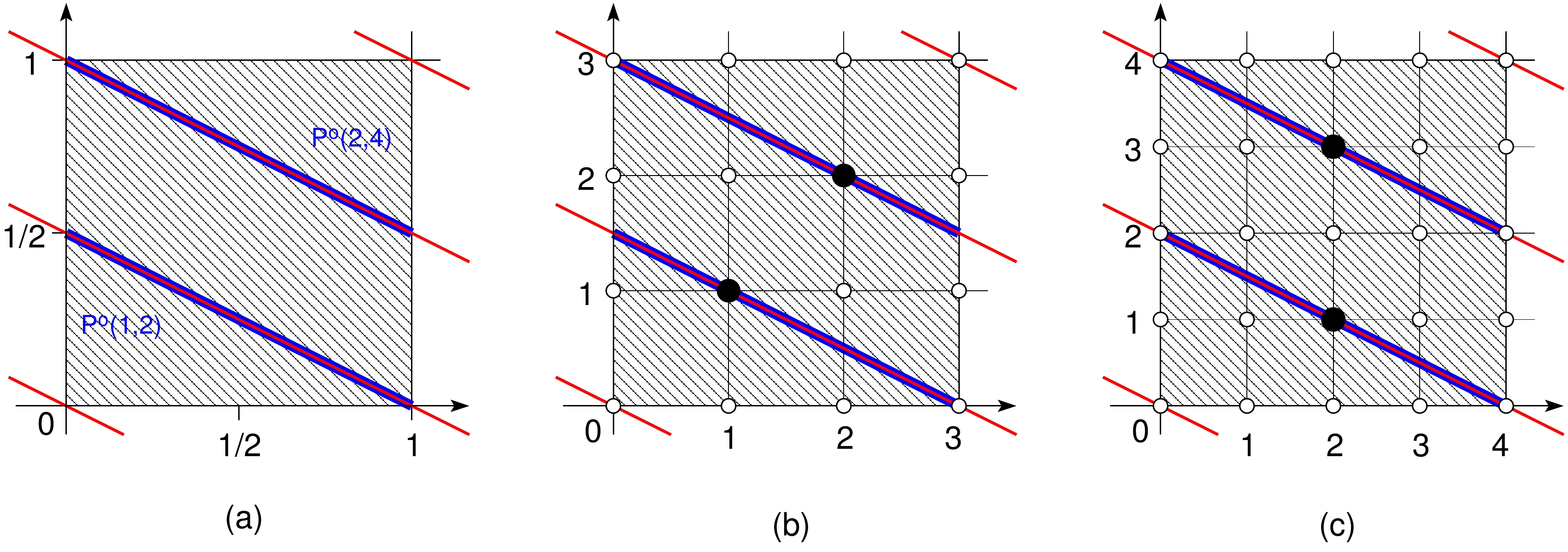} % original 13.5 x 4.75
\caption{(a) The ``modular flow inside-out polytope'' of Example~\ref{modular-flow-example}.  (b), (c)  Its $k$-dilates for $k=3$ and $k=4$. The black dots
are the points of $P_\bd^\circ$, counted by $\varphi^*_X(k)$.\label{modular-flow-figure}}
\end{figure}
\end{example}

Recall from Section~\ref{subsec:int-flow} that $X$ is coloop-free
if and only if it has at least one totally cyclic orientation.
In addition, $X$ has no coloops if and only if $\ker\bd$ is not contained
in any coordinate hyperplane, i.e.,
it contains a nowhere-zero vector.  Indeed, if
$\bd$ has a nowhere-zero nullvector, then each column of~$\bd$ can be solved for
as a linear combination of the other columns.  Conversely, if $\bd$ has
no coloops, then for each facet~$f$ there is a nullvector with a nonzero
element in the $f^{th}$ position, and taking a generic real linear
combination of these nullvectors gives a nowhere-zero vector in
$\Flow_\Rr(X)$.

Our modular analogue of Theorem \ref{integral-flow-reciprocity} can be most easily expressed through 
\begin{equation} \label{flow-pair}
\Phi_k(X) := \left\{ (\bar{w},\sigma) \st
\begin{array}{l}
\text{$\bar{w}$ is a $\Z_k$-flow on $X$ and}\\
\text{$\sigma:\zero(\bar{w})\rightarrow\{-1,1\}$ extends}\\
\text{to a totally cyclic orientation of~$X$}
\end{array}
\right\}.
\end{equation}

\begin{theorem} \label{modular-flow-reciprocity-theorem}
Suppose that $X$ has no coloops.  Then
\[
(-1)^{m-\rho} \varphi_X^*(-k)=|\Phi_k(X)| \, .
\]
\end{theorem}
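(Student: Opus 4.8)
The plan is to start from the reciprocity identity \eqref{eqn:mod-flow-reciprocity}, which already supplies $(-1)^{m-\rho}\varphi_X^*(-k)=|\Zz^F\cap kP_\bd|$, and then to exhibit an explicit bijection between the lattice points of $kP_\bd$ and the pairs in $\Phi_k(X)$. Since the polytopes $P_\bd(b)$ are pairwise disjoint (as $\bd w=b$ recovers $b$ from $w$, so a point lies in at most one $P_\bd(b)$), a lattice point of $kP_\bd$ is exactly a vector $v\in\{0,1,\dots,k\}^F$ with $\bd v=kb$ for some feasible $b\in\BBB$. I would assign to such a $v$ the pair $(\bar w,\sigma)$, where $\bar w:=v\bmod k\in\Zz_k^F$ and $\sigma\colon\zero(\bar w)\to\{\pm1\}$ records, for each coordinate $f$ with $v_f\in\{0,k\}$, whether $v_f=0$ (set $\sigma_f=-1$) or $v_f=k$ (set $\sigma_f=+1$). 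Because $\bd v=kb\equiv 0\pmod k$, the reduction $\bar w$ is a $\Zz_k$-flow, and $\zero(\bar w)=\{f:v_f\in\{0,k\}\}$, so $\sigma$ is well defined. Conversely $(\bar w,\sigma)$ determines $v$ uniquely: on $\supp(\bar w)$ take the representative of $\bar w_f$ in $\{1,\dots,k-1\}$, and on $\zero(\bar w)$ put $v_f=0$ or $k$ according to $\sigma_f$. Thus the assignment is injective with a transparent inverse, and the theorem reduces to checking that the lattice points counted by $|\Zz^F\cap kP_\bd|$ correspond exactly to pairs whose $\sigma$ extends to a totally cyclic orientation.

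The heart of the argument is therefore the equivalence that $b=\bd v/k$ is feasible, i.e.\ there is $w\in(0,1)^F$ with $\bd w=b$, if and only if $\sigma$ extends to a totally cyclic orientation of $X$. For the forward direction I would pick $w\in P_\bd^\circ(b)$ and set $u:=v-kw\in\ker\bd=\Flow_\Rr(X)$; by construction $u_f=-kw_f<0$ when $v_f=0$ and $u_f=k(1-w_f)>0$ when $v_f=k$, so the strict signs of $u$ on $\zero(\bar w)$ agree with $\sigma$. Since $X$ is coloop-free, $\Flow_\Rr(X)$ contains a nowhere-zero vector $z$ (as recalled just before the theorem), and for small $\epsilon>0$ the flow $u+\epsilon z$ is nowhere zero while keeping the signs of $u$ on $\zero(\bar w)$; its sign vector is then a totally cyclic orientation extending $\sigma$. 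For the converse, given a totally cyclic orientation $\ori$ extending $\sigma$, I would take a strictly $\ori$-positive flow $z$ and set $w:=v/k-\epsilon z$ for small $\epsilon>0$. On $\supp(\bar w)$ the coordinate $v_f/k$ lies in $(0,1)$, so $w_f\in(0,1)$ for small $\epsilon$; on $\zero(\bar w)$ the compatibility of $\ori$ with $\sigma$ forces $z_f<0$ when $v_f=0$ and $z_f>0$ when $v_f=k$, making $w_f=-\epsilon z_f$ respectively $w_f=1-\epsilon z_f$ land in $(0,1)$. As $\bd w=\bd(v/k)=b$, this $w$ witnesses feasibility of $b$.

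Combining the bijection with \eqref{eqn:mod-flow-reciprocity} then yields $|\Phi_k(X)|=|\Zz^F\cap kP_\bd|=(-1)^{m-\rho}\varphi_X^*(-k)$, as claimed. I expect the feasibility equivalence of the second paragraph to be the main obstacle: it is precisely the point where the geometric statement that the affine slice $\{\bd w=b\}$ meets the open cube is translated into the combinatorial statement that the boundary signs extend to a totally cyclic orientation. Both implications rest on perturbing a flow carrying prescribed strict signs on $\zero(\bar w)$ into a nowhere-zero flow, which is exactly where the coloop-free hypothesis enters through the existence of a nowhere-zero element of $\Flow_\Rr(X)$. The remaining steps (disjointness of the $P_\bd(b)$, well-definedness of $\bar w$ and $\sigma$, and the $\{0,k\}\leftrightarrow\{\pm1\}$ correspondence defining the inverse map) are routine bookkeeping.
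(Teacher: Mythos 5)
Your proposal is correct and follows essentially the same route as the paper: both reduce via Ehrhart--Macdonald reciprocity to counting $\Zz^F\cap kP_\bd$, use the same $\{0,k\}\leftrightarrow\{\pm1\}$ correspondence to biject these lattice points with pairs $(\bar w,\sigma)$, and establish the key equivalence (feasibility of $b$ versus extendability of $\sigma$ to a totally cyclic orientation) by perturbing a real flow with prescribed strict signs on $\zero(\bar w)$ into a nowhere-zero one, which is exactly where the coloop-free hypothesis enters in the paper as well. Your packaging of the heart of the argument as a single ``feasible iff $\sigma$ extends'' equivalence is a clean reorganization of what the paper distributes between the well-definedness and surjectivity halves of its bijection, but the underlying ideas are identical.
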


\begin{proof}
For $w\in\Z^F\cap k P_\bd$, define $h(w)=(\bar{w},\sigma)$, where $\bar{w}=w \bmod k$ and
\[
\sigma_f =
\begin{cases}
-1 & \text{ if } w_f = 0,\\
1 & \text{ if } w_f = k.
\end{cases}
\]
We will now show that $h$ is a bijection. By \eqref{eqn:mod-flow-reciprocity} this will complete the proof.

First, to show that $h$ is well-defined, we observe on the one hand that, by construction,
$\bar{w}$ is a $\Z_k$-flow on $X$ because $\bd\bar{w}=kb'$ for some $b'\in\Zz^R$. On the other 
hand, a totally cyclic orientation $\ori$ of $X$ with $\sigma = \ori|_{\zero(\bar{w})}$ can be found as follows. Let $b\in\Zz^R$ such that $w\in kP(b)$. By construction
\[
kP^\circ(b) = (\Flow_\R(X) + w)\cap (0,k)^F.
\]
In particular, $kP^\circ(b)$ is a relatively open, non-empty set of dimension $m-\rho$. Since $X$ has no coloops, the flow space is not contained in any coordinate hyperplane (that is, any hyperplane in the Boolean arrangement~$\BB$),
and the intersection of any coordinate hyperplane with $\Flow_\R(X)$ has dimension $m-\rho-1$. By the identity
\[
kP^\circ(b)\sm (\BB + w) = ((\Flow_\R(X) \sm\BB) + w)\cap (0,k)^F,
\]
it follows that $kP^\circ(b)\sm (\BB + w)$ can be obtained from $kP^\circ(b)$ by removing finitely many sets of dimension $m-\rho-1$. This implies in particular that $kP^\circ(b)\sm (\BB + w)$ is non-empty. Let $\hat{w}\in kP^\circ(b)\sm (\BB + w)$ and $v:=w-\hat{w}$. As both $w,\hat{w}\in\Flow_\R(X)+w$, it follows that $v\in\Flow_\R(X)$. As $\hat{w}\not\in(\BB+w)$, it follows that $v\not\in\BB$. Thus, $v$ is a nowhere-zero $\R$-flow on $X$. Let $\ori$ denote the sign vector of $v$. By the reciprocity theorem for integral flows (Theorem~\ref{integral-flow-reciprocity}) we know that $\ori$ is a totally cyclic orientation of $X$. Moreover, for $f\in\zero(\bar{w})$, either $w_f=0$, when $\hat{w}_f>w_f$, or else $w_f=k$, when $\hat{w}_f < w_f$, and in either case $\ori_f = \sigma_f$.

It is immediate from the definition that $h$ is injective.
To show surjectivity, suppose that $\bar{w}$ is a
$\Z_k$-flow on $X$ and $\sigma$ is a function
$\zero(\bar{w})\to\{-1,1\}$ such that there exists a totally cyclic
orientation $\ori$ of $X$ with $\sigma=\ori|_{\zero(\bar{w})}$.  
Define $w\in\Zz^F$ by
\[
w_f = \begin{cases}
\lift(\bar{w}_f) & \text{if $f\not\in\zero(\bar{w})$},\\
0 & \text{if $f\in\zero(\bar{w})$ and $\sigma_f=-1$},\\
k & \text{if $f\in\zero(\bar{w})$ and $\sigma_f=1$},
\end{cases}
\]
where $\lift(x)$ denotes the unique integer in $\{1,2,\dots,k-1\}$
that reduces to $x$ modulo~$k$.  Then $\bd w = kb$ for some $b\in\Z^R$
(because $\bar{w}$ is a $\Zz_k$-flow) and $w\in\Z^F\cap[0,k]^F$. So
all that remains to show is that $P_\bd^\circ(b)\neq\emptyset$.  By
Theorem~\ref{integral-flow-reciprocity}, there exists a nowhere-zero
real flow $v$ with $\sgn(v)=\ori$; we may choose $v$ to have
arbitrarily small magnitude.  We claim that $\hat{w}=w-v\in
P_\bd^\circ(b)$.  Indeed, first, we have
\[
  \bd\hat{w} = \bd w - \bd v = kb - 0  = kb \, .
\]
Second, if $f\not\in\zero(\bar{w})$ then $0<\hat{w}_f <k$.
If $w_f = 0$ then $\sgn(v_f)=-1$ and  $\hat{w}_f > 0$,
while if $w_f = k$ then $\sgn(v_f)=1$ and $\hat{w}_f < k$.
This proves the claim and completes the proof that $h$ is surjective.
\end{proof}

\begin{corollary}
If $X$ has no coloops, then $|\varphi_X^*(-1)|$
is the number of  totally cyclic orientations of $X$.
\end{corollary}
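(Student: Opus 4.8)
The plan is to obtain the corollary as a direct specialization of Theorem~\ref{modular-flow-reciprocity-theorem} at $k=1$. That theorem asserts that, when $X$ has no coloops,
\[
(-1)^{m-\rho}\varphi_X^*(-k)=|\Phi_k(X)|,
\]
where $\Phi_k(X)$ is the set defined in~\eqref{flow-pair}. Since the hypothesis of the corollary is exactly that $X$ has no coloops, the only work remaining is to identify the set $\Phi_1(X)$ and then read off the cardinality.

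The key observation is the following identification. The ring $\Z_1$ is the trivial group, so its only element is $0$; consequently the unique $\Z_1$-flow on $X$ is the zero vector $\bar w=0$. For this flow every coordinate vanishes, so $\zero(\bar w)=F$. A pair $(\bar w,\sigma)\in\Phi_1(X)$ therefore consists of this single flow together with a function $\sigma\colon\zero(\bar w)=F\to\{-1,1\}$, i.e.\ a full orientation of $X$, subject to the requirement that $\sigma$ extend to a totally cyclic orientation. Because $\sigma$ is already defined on all of $F$, the phrase ``extends to a totally cyclic orientation'' simply means that $\sigma$ is itself a totally cyclic orientation. Hence $\Phi_1(X)$ is in canonical bijection with the set of totally cyclic orientations of $X$, and $|\Phi_1(X)|$ equals the number of such orientations.

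Combining these two steps, setting $k=1$ in the theorem gives $(-1)^{m-\rho}\varphi_X^*(-1)=|\Phi_1(X)|$, which we have just identified as the number of totally cyclic orientations of $X$. Since this count is a nonnegative integer, the sign $(-1)^{m-\rho}$ is immaterial once we pass to absolute values, yielding $|\varphi_X^*(-1)|$ equal to the number of totally cyclic orientations, as claimed. I do not anticipate any genuine obstacle here: the corollary is essentially a clean evaluation of the preceding theorem, and the only point meriting even a sentence of care is verifying that $\bar w=0$ is the unique $\Z_1$-flow and that the extension condition on $\sigma$ collapses to ``$\sigma$ is totally cyclic'' once $\zero(\bar w)=F$.
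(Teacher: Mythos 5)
Your proposal is correct and follows exactly the paper's own argument: specialize Theorem~\ref{modular-flow-reciprocity-theorem} at $k=1$, observe that the unique $\Z_1$-flow is $\bar w=0$ so that $\zero(\bar w)=F$ and the extension condition on $\sigma$ collapses to $\sigma$ itself being a totally cyclic orientation. Your write-up simply spells out these steps in slightly more detail than the paper does.
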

\begin{proof}
If we set $k=1$ in the definition \eqref{flow-pair},
we see that the vector $\bar{w}$ must be the zero vector, and $\sigma=\ori$ is
a totally cyclic orientation of~$X$.
\end{proof}

\begin{remark} \label{flow-coloops}
Suppose that $X$ has a coloop $f_1$.
Let $C$ be an invertible $r\x r$ matrix with an initial set of columns that form
a $\Zz$-basis for the saturation of $\Zz\{f_2,\dots,f_{m}\}$.
Then $C^{-1}\bd$ has the form
\[\left[\begin{array}{c|c}
a_1&\\
\vdots & \hat\bd\\
a_{\rho-1}&\\ \hline
a_\rho & 0\\ \hline
0&0
\end{array}
\right].\]
Note that $\hat\bd$ is the boundary map of the complex $X/f$ obtained by collapsing the cell $f$ to a point, and $w=(w_1,\dots,w_m)$ is a flow on $X$ if and only if
$(w_2,\dots,w_m)$ is a flow on $X/f$ and $w_1a_i\equiv 0 \bmod k$ for all $i=1,2,\dots,\rho$.  The number of values of $w_1$ for which the second condition holds is $\gcd(a_1,\dots,a_\rho,k)$.  Therefore,
$$\varphi^*_X(k) = \gcd(a_1,\dots,a_\rho,k)\, \varphi^*_{X/f}(k) \, .$$
\end{remark}

Note that if $X$ is a TU complex, then for any set $S$ of facets the contraction $X/S$ is well defined. In this case the modular flow reciprocity theorem for complexes can be phrased in exactly the same way as the modular flow reciprocity theorem for graphs.

\begin{corollary}
\label{modular-flow-reciprocity-for-shrinkable-complexes}
Let $X$ be a totally unimodular complex without coloops. Then $|\varphi^*_X(-k)|$ counts the number of pairs $(\bar{w},\sigma)$ where $\bar{w}$ is a $\Z_k$-flow on $X$ and $\sigma$ is a totally cyclic orientation on $X/\supp(\bar{w})$.
\end{corollary}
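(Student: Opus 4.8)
The plan is to derive the corollary directly from the modular flow reciprocity theorem, Theorem~\ref{modular-flow-reciprocity-theorem}, which gives $(-1)^{m-\rho}\varphi_X^*(-k)=|\Phi_k(X)|$. Since the right-hand side is a cardinality it is nonnegative, so $|\varphi_X^*(-k)|=|\Phi_k(X)|$, and it suffices to match $\Phi_k(X)$ with the set of pairs in the statement. Because $X$ is totally unimodular, the contraction $X/\supp(\bar w)$ is a well-defined cell complex for every $\Z_k$-flow $\bar w$, and its facet set is $F\sm\supp(\bar w)=\zero(\bar w)$, which is exactly the domain of $\sigma$. Hence a map $\sigma\colon\zero(\bar w)\to\{-1,1\}$ is literally the same data as an orientation of $X/\supp(\bar w)$, and the whole corollary reduces to proving, for each fixed $\Z_k$-flow $\bar w$ (writing $S=\supp(\bar w)$ and $T=\zero(\bar w)$), the equivalence
$$\sigma \text{ extends to a totally cyclic orientation of } X \quad\Longleftrightarrow\quad \sigma \text{ is a totally cyclic orientation of } X/S.$$

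The structural input I would use is that restriction of flows along the contraction, $w\mapsto w|_T$, is a surjection $\Flow_\R(X)\to\Flow_\R(X/S)$. This follows from the restriction statement in Theorem~\ref{flow-deletion-contraction} applied one facet at a time: total unimodularity is preserved under pivoting, so every intermediate complex is again shrinkable and the single-step claim applies, and the same pivot computation that proves it over $\Z_k$ works verbatim over $\R$. Granting this, the forward implication is immediate: given a totally cyclic orientation $\ori$ of $X$ with $\ori|_T=\sigma$, choose a strictly $\ori$-positive flow $v\in\Flow_\R(X)$; then $v|_T\in\Flow_\R(X/S)$ and $\sigma_f(v|_T)_f=\ori_f v_f>0$ for every $f\in T$, so $v|_T$ is strictly $\sigma$-positive and $\sigma$ is a totally cyclic orientation of $X/S$.

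For the reverse implication I would start from a strictly $\sigma$-positive flow $w'\in\Flow_\R(X/S)$ and lift it, by surjectivity, to $w\in\Flow_\R(X)$ with $w|_T=w'$. Then $w$ is strictly $\sigma$-positive on $T$ but may vanish on some coordinates of $S$, so it need not define an orientation. To repair this I would invoke the hypothesis that $X$ is coloop-free, which (as recalled just before Theorem~\ref{modular-flow-reciprocity-theorem}) guarantees a nowhere-zero flow $v\in\Flow_\R(X)$, and perturb: for all sufficiently small $t>0$ the flow $w+tv$ is nowhere zero, has the same sign as $w$ (hence equal to $\sigma$) on $T$, and acquires the sign of $v$ on the vanishing coordinates of $S$. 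Its sign vector is then a totally cyclic orientation of $X$ extending $\sigma$.

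I expect the reverse implication to be the main obstacle, and within it the perturbation step: one must select a single $t$ that simultaneously preserves every nonzero sign of $w$ (on $T$ and on the nonvanishing part of $S$) while the nowhere-zero flow $v$ fixes the signs on the vanishing coordinates. Finiteness of $F$ makes this a routine argument, taking the minimum of finitely many bounds, once $v$ is in hand; so the genuine content of the proof is the combination of coloop-freeness (which produces $v$) with the surjectivity of flow restriction under contraction.
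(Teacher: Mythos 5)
Your argument is correct and follows the paper's overall skeleton: both reduce the corollary to the equivalence, for each $\Z_k$-flow $\bar w$, between ``$\sigma$ extends to a totally cyclic orientation of $X$'' and ``$\sigma$ is a totally cyclic orientation of $X/\supp(\bar w)$,'' both prove the forward implication by restricting a strictly positive real flow through the chain of single-facet contractions, and both prove the reverse implication by lifting a $\sigma$-positive flow via the surjectivity statement in Theorem~\ref{flow-deletion-contraction} and then perturbing to kill the unwanted zeros. The one genuine divergence is in the perturbation step. The paper perturbs by a real flow $w$ constructed to vanish exactly on $\zero(\bar w)$ and to be nonzero on all of $\supp(\bar w)$; producing such a $w$ requires lifting $\bar w$ to $\Z^F$, solving $\bd z=b$ with $z|_{\zero(\bar w)}=0$ over the integers, and hence invoking Lemma~\ref{integer-solution} (total unimodularity) a second time. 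You instead add a small positive multiple $tv$ of an arbitrary nowhere-zero real flow $v$, whose existence is guaranteed by coloop-freeness; for small $t$ this preserves all nonzero signs of the lifted flow (in particular the signs $\sigma$ on $\zero(\bar w)$) and makes the vanishing coordinates nonzero, and the sign vector of the result is a totally cyclic orientation extending $\sigma$ by Proposition~\ref{bijectionflowregions}. Your version is more elementary at that step, trading the TU-based integrality lemma for the coloop-free hypothesis (which the corollary assumes anyway, and which Theorem~\ref{modular-flow-reciprocity-theorem} already requires); total unimodularity is then used only to guarantee that the iterated contraction $X/\supp(\bar w)$ is defined and that Theorem~\ref{flow-deletion-contraction} applies at each stage.
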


\begin{proof} To derive the corollary from Theorem~\ref{modular-flow-reciprocity-theorem}, all we have to show is that for any $\Z_k$-flow $\bar{w}$ and any $\sigma:\zero(\bar{w})\rightarrow\{-1,1\}$ the following statements are equivalent:
\begin{enumerate}[(1)]
\item $\sigma$ extends to a totally cyclic orientation of $X$. \label{enum:contract-support-1}
\item $\sigma$ is a totally cyclic orientation of $X/\supp(\bar{w})$. \label{enum:contract-support-2}
\end{enumerate}

Suppose that $\supp(\bar w)=\{f_1,\dots,f_\ell\}$.
By the observation after Definition~\ref{def:ISH},
we can find a sequence of
ridges $r_1,\dots,r_\ell$ such that the contractions
$X_0=X$, $X_1=X_0/(r_1,f_1)$, \dots, $X_i=X_{i-1}/(r_i,f_i)$, \dots,\ $X_\ell=X/\supp(\bar w)$
are well-defined.

\eqref{enum:contract-support-1} \emph{implies} \eqref{enum:contract-support-2}: Let $\ori$ be a totally cyclic orientation of $X$ such that $\ori|_{\zero(\bar{w})}=\sigma$. Then by Proposition~\ref{bijectionflowregions} there exists an $\ori$-positive $\R$-flow $v$ on $X$. We claim that $v|_{\zero(\bar{w})}$ is a $\sigma$-positive flow on $X/\supp(\bar{w})$. Clearly, $v|_{\zero(\bar{w})}$ is $\sigma$-positive.  By Theorem~\ref{flow-deletion-contraction} we know that if $v|_{F\setminus f_1,\ldots f_{i-1}}$ is an $\R$-flow on $X_{i-1}$, then $v|_{F\setminus f_1,\ldots f_i}$ is an $\R$-flow on $X_i$. By induction we obtain that $v|_{\zero(\bar{w})}$ is an $\R$-flow on $X_\ell=X/\supp(\bar{w})$. Thus, by Proposition~\ref{bijectionflowregions}, we conclude that $\sigma$ is a totally cyclic orientation on $X/\supp(\bar{w})$.

\eqref{enum:contract-support-2} \emph{implies} \eqref{enum:contract-support-1}: Let $\sigma$ be a totally cyclic orientation of $X/\supp(\bar{w})$. Then there exists a $\sigma$-positive $\R$-flow $v_\ell$ on $X_\ell=X/\supp(\bar{w})$. By Theorem~\ref{flow-deletion-contraction}, we can inductively construct $\R$-flows $v_\ell,\dots,v_0=v$ on $X_\ell,X_{\ell-1},\dots,X_0=X$ such that $\sgn(v_i|_{\zero(\bar{w})}) = \sigma$.  (Note that these flows need not be nowhere-zero.)

We now construct an $\R$-flow $w$ on $X$ such that $\zero(w)=\zero(\bar{w})$.  First, interpret $\bar{w}$ as a vector in $\Z^F$ by lifting all entries to their canonical representatives in $\Z$. Then $\bd\bar{w}=kb$ for some $b\in\Z^R$. Second, choose a lattice point $z\in\Z^F$ such that
\[z|_{\zero(\bar{w})} = 0 \quad\text{and}\quad \bd z = b \, .\]
(Such a lattice point must exist because this linear system of equations has a rational solution, namely $\frac{1}{k}\bar{w}$, and by Lemma~\ref{integer-solution} it must also have an integral solution.)
Finally, let $w = \bar{w} - kz$, so that $\bd w = 0$. Note that $w_f\not=0$ whenever $f\in\supp(\bar{w})$, because by construction, $\bar{w}_f$ is not a multiple of $k$.

Now, $v+\lambda w$ is an $\R$-flow on $X$ for any $\lambda\in\R$. Because $w_f=0$ for all $f\in\zero(\bar{w})$, we can choose $\lambda$ such that $v+\lambda w$ is nowhere zero and has the same sign as $v$ on all facets $f\in\zero(\bar{w})$. By Proposition~\ref{bijectionflowregions},
there exists a totally cyclic orientation $\ori$ compatible with $v + \lambda w$, and we have $\ori|_{\zero(\bar{w})}=\sigma$ as desired.
\end{proof}

\subsection{Modular Tension Reciprocity} \label{subsec:mod-ten-rec}

Recall that the modular tension function
$\tau^*_X(k)$ gives the number of nowhere-zero $\Z_k$-tensions of $X$,
or equivalently (by Proposition~\ref{cuts-equal-tensions-mod-n})
the number of nowhere-zero $\Z_k$-cuts.  In this section we again
use lattice-point methods to obtain a reciprocity theorem for
modular tensions.  The theorems and proofs are similar, and in many
places dual, to those of Section~\ref{subsec:mod-flow-rec}.

Let $\SS$ be a set of vectors that forms a basis for $\ker\bd$ as a $\Zz$-module. Let
$[\SS]$ be the matrix whose rows are the vectors in $\SS$. Then the tension space
\[
 \Ten_\R(X) = \left\{ \psi\in\R^F \; \middle| \; [\SS]\psi = 0 \right\}
\]
is of dimension $\rho$.
For each $b\in\Z^R$, define
\[Q_\SS(b):=\left\{\psi\in[0,1]^F:[\SS]\psi=b\right\},\qquad
Q_\SS^\circ(b):=\left\{\psi\in(0,1)^F:[\SS]\psi=b\right\}.
\]
Call $b$ \emph{feasible} if $Q_\SS^\circ(b)\neq\0$.  Let $\BBB(\SS)$
denote the set of all feasible vectors, and define
\[
Q_\SS:=\bigsqcup_{b\in \BBB(\SS)}Q_\SS(b) \, ,\qquad
Q_\SS^\circ:=\bigsqcup_{b\in \BBB(\SS)}Q_\SS^\circ(b) \, .
\]
Then the modular tension function of $X$ is the Ehrhart function of $Q^\circ_\SS$, that is, 
\[
\tau^*_X(k)= E_{Q^\circ_\SS}(k)=\left|\Z^F\cap k Q_\SS^\circ\right|.
\]
For each $b\in\BBB$, the dimension of $Q^\circ_\SS(b)$ is $\rho$, since the intersection of a translate of $\Ten_\R(X)$ with the open set $(0,1)^F$ has the same dimension as $\Ten_\R(X)$. Therefore we can apply Ehrhart--Macdonald reciprocity to obtain
\begin{align*}
(-1)^{\rho} E_{Q^\circ_\SS}(-k)
&= \sum_{b\in\BBB} (-1)^{\rho} E_{Q^\circ_\SS(b)}(-k) \\
&= \sum_{b\in\BBB}  E_{Q_\SS(b)}(k) \\
&=  E_{Q_\SS}(k) 
\end{align*}
which gives us
\begin{equation}
\label{eqn:mod-tension-reciprocity}
(-1)^{\rho} \tau^*_X(-k)=\left|\Z^F\cap kQ_\SS\right|.
\end{equation}
As before, for $\bar{\psi}\in\Z_k^F$, let $\zero(\bar{\psi})$ denote the set of all $f\in F$ with $\bar{\psi}_f=0$.

The construction of $Q$ is independent of the choice of $\SS$,
for the following reason.  If $\SS'$ is another $\Zz$-basis
for $\ker\bd$, then there is an invertible $\Zz$-matrix
$C$ of size $m-\rho$ such that $C[\SS]=[\SS']$, and a
calculation similar to \eqref{change-of-basis-flow}
shows that 
\begin{align*}
Q_{C\SS}^\circ( b)&=Q_{\SS}^\circ(C^{-1}b) \, ,&
Q_{C\SS}^\circ(Ca)&=Q_{\SS}^\circ(a) \, ,\\
 \BBB(\SS) &= C^{-1}\BBB(C\SS) \, ,&
C\BBB(\SS) &= \BBB(C\SS) \, ,\\
Q_{\SS} &= Q_{C\SS} \, , &
Q^\circ_{\SS} &= Q^\circ_{C\SS} \, .
\end{align*}
For this reason, henceforth we write $Q$ instead of $Q_\SS$.

If~$X$ has no loops---i.e., $\bd$ has no zero columns---then a
generic real linear combination of its rows gives a nowhere-zero
vector in $\Cut_\Rr(X)$, hence in $\Ten_\Rr(X)$. Thus, if $X$ has no
loops, then $\Ten_\R(x)$ is not contained in any coordinate
hyperplane.

To state the modular analogue of Theorem \ref{integral-tension-reciprocity}, we define
\begin{equation} \label{tension-pair}
\Psi_k(X) := \left\{ (\bar{\psi},\sigma) \st
\begin{array}{l}
\text{$\bar{\psi}$ is a $\Z_k$-tension on $X$ and}\\
\text{$\sigma:\zero(\bar{\psi})\rightarrow\{-1,1\}$ extends}\\
\text{to an acyclic orientation of~$X$}
\end{array}
\right\}.
\end{equation}

\begin{theorem} \label{modular-tension-reciprocity}
Suppose that $X$ has no loops.  Then 
\[
(-1)^{\rho}\tau^*_X(-k)=|\Psi_k(X)| \, .
\]
\end{theorem}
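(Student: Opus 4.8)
The plan is to mirror the proof of Theorem~\ref{modular-flow-reciprocity-theorem}, using flow--tension duality throughout. By equation~\eqref{eqn:mod-tension-reciprocity} we already have $(-1)^{\rho}\tau^*_X(-k)=\left|\Z^F\cap kQ\right|$, so it suffices to exhibit a bijection $h\colon \Z^F\cap kQ\to\Psi_k(X)$. Every $\psi\in\Z^F\cap kQ$ lies in some dilated cell $kQ(b)$ and hence has all coordinates in $[0,k]$; writing $\bar\psi=\psi\bmod k$, the coordinates indexed by $\zero(\bar\psi)$ are exactly those equal to $0$ or to $k$. I would therefore set $h(\psi)=(\bar\psi,\sigma)$, where $\sigma_f=-1$ if $\psi_f=0$ and $\sigma_f=1$ if $\psi_f=k$, in exact analogy with the map used for flows.

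First I would check that $h$ is well-defined. Since $[\SS]\psi=kb$, we get $[\SS]\bar\psi\equiv0\pmod k$, so $\bar\psi$ is a $\Z_k$-tension. The real content is producing an acyclic orientation of $X$ that restricts to $\sigma$ on $\zero(\bar\psi)$. Here I would invoke the ``no loops'' hypothesis, which (as observed just before the theorem) guarantees that $\Ten_\R(X)$ lies in no coordinate hyperplane. Because $kQ^\circ(b)=(\Ten_\R(X)+\psi)\cap(0,k)^F$ is relatively open of dimension $\rho$, while each coordinate hyperplane meets the affine space $\Ten_\R(X)+\psi$ in dimension $\rho-1$, deleting these finitely many slices leaves a nonempty set; I may thus choose $\hat\psi\in kQ^\circ(b)\setminus(\BB+\psi)$. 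Then $v:=\psi-\hat\psi$ is a nowhere-zero real tension, so by Proposition~\ref{bijectiontensionregions} its sign vector $\ori=\sgn(v)$ is an acyclic orientation, and comparing signs on $\zero(\bar\psi)$ gives $\ori|_{\zero(\bar\psi)}=\sigma$.

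Injectivity is immediate from the definition of $h$. For surjectivity, given $(\bar\psi,\sigma)\in\Psi_k(X)$ with $\sigma$ extending to an acyclic orientation $\ori$, I would lift $\bar\psi$ to $\psi\in\Z^F$ by taking the canonical representative in $\{1,\dots,k-1\}$ off $\zero(\bar\psi)$ and setting $\psi_f=0$ or $\psi_f=k$ according as $\sigma_f=-1$ or $\sigma_f=1$ on $\zero(\bar\psi)$. Then $[\SS]\psi=kb$ for some $b\in\Z^R$ and $\psi\in[0,k]^F$, so the only remaining point is $\psi\in kQ$, i.e.\ $Q^\circ(b)\neq\emptyset$. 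Using Proposition~\ref{bijectiontensionregions} together with the integral tension reciprocity Theorem~\ref{integral-tension-reciprocity}, I may pick a nowhere-zero real tension $v$ of arbitrarily small magnitude with $\sgn(v)=\ori$; the same sign bookkeeping as above then shows $\hat\psi:=\psi-v$ lies in $(0,k)^F$ and satisfies $[\SS]\hat\psi=kb$, whence $\hat\psi\in kQ^\circ(b)$ and $h(\psi)=(\bar\psi,\sigma)$.

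The step I expect to be the main obstacle is the well-definedness claim that $\sigma$ extends to an acyclic orientation: this is precisely where the ``no loops'' hypothesis is essential, and where one must control how the coordinate hyperplanes slice the open polytope $kQ^\circ(b)$, dual to the coloop-free dimension count in Theorem~\ref{modular-flow-reciprocity-theorem}. Once that is in hand, the surjectivity construction is a routine dualization and the remaining verifications reduce to the same elementary sign computations as in the flow case.
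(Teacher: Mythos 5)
Your proposal is correct and follows essentially the same route as the paper's own proof: the same bijection $h(\psi)=(\bar\psi,\sigma)$ on $\Z^F\cap kQ$, the same dimension-count argument (using the no-loops hypothesis) to extract a nowhere-zero real tension $v=\psi-\hat\psi$ whose sign vector gives the acyclic orientation via Proposition~\ref{bijectiontensionregions}, and the same lift-and-perturb construction for surjectivity. The paper's proof is precisely the dualization of the modular flow argument that you describe, so there is nothing to add.
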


\begin{proof}
For $\psi\in\Zz^F\cap k Q$, define $h(\psi)=(\bar{\psi},\sigma)$, where $\bar{\psi}=\psi \bmod k$ and
\[
\sigma_f :=
\begin{cases}
-1 & \text{ if } \psi_f = 0,\\
1 & \text{ if } \psi_f = k.
\end{cases}
\]
We will show that $h$ is a bijection. By \eqref{eqn:mod-tension-reciprocity} this will complete the proof.

First, to show that $h$ is well-defined, we observe on the one hand that, by construction, $\bar{\psi}$ is a $\Z_k$-tension on $X$ because $\bd\bar{\psi}=kb'$ for some $b'\in\Zz^R$. On the other hand, an acyclic orientation $\ori$ of $X$ with $\sigma = \ori|_{\zero(\bar{\psi})}$ can be constructed as follows. Let $b\in\Zz^R$ such that $\psi\in kQ(b)$. By construction
\[
kQ^\circ(b) = (\Ten_\R(X) + \psi)\cap (0,k)^F.
\]
In particular, $kQ^\circ(b)$ is a relatively open, non-empty set of dimension $\rho$.  Since $X$ has no loops, 
the tension space is not contained in any coordinate hyperplane,
and the intersection of any coordinate hyperplane with $\Ten_\R(X)$ has dimension $\rho-1$.  We have
\[
kQ^\circ(b)\sm (\BB + \psi) = ((\Ten_\R(X) \sm\BB) + \psi)\cap (0,k)^F,
\]
and so $kQ^\circ(b)\sm (\BB + \psi)$ can be obtained from $kQ^\circ(b)$ by removing finitely many sets of dimension $\rho-1$. This implies in particular that $kQ^\circ(b)\sm (\BB + \psi)$ is non-empty. Let $\hat{\psi}\in kP^\circ(b)\sm (\BB + \psi)$ and $v:=\psi-\hat{\psi}$. As both $\psi,\hat{\psi}\in\Ten_\R(X)+w$, it follows that $v\in\Ten_\R(X)$. As $\hat{\psi}\not\in(\BB+\psi)$, it follows that $v\not\in\BB$. Thus, $v$ is a nowhere-zero $\R$-tension on $X$. Let $\ori$ denote the sign vector of $v$. By the reciprocity theorem for integral tensions (Theorem~\ref{integral-tension-reciprocity}) we know that $\ori$ is an acyclic orientation of $X$. Moreover, for $f\in\zero(\bar{\psi})$, either $\psi_f=0$, when $\hat{\psi}_f>\psi_f$, or else $\psi_f=k$, when $\hat{\psi}_f < \psi_f$, and in either case $\ori_f = \sigma_f$.

It is immediate from the definition that $h$ is injective.
To show surjectivity, suppose that $\bar{\psi}$ is a
$\Z_k$-tension on $X$ and $\sigma$ is a function
$\zero(\bar{\psi})\to\{-1,1\}$ such that there exists a totally cyclic
orientation $\ori$ of $X$ with $\sigma=\ori|_{\zero(\bar{\psi})}$.  
Define $\psi\in\Zz^F$ by
\[
\psi_f := \begin{cases}
\lift(\bar{\psi}_f) & \text{if $f\not\in\zero(\bar{\psi})$},\\
0 & \text{if $f\in\zero(\bar{\psi})$ and $\sigma_f=-1$},\\
k & \text{if $f\in\zero(\bar{\psi})$ and $\sigma_f=1$}.
\end{cases}
\]
Then $[\SS]\psi = kb$ for some $b\in\Z^R$
(because $\bar{\psi}$ is a $\Zz_k$-tension) and $\psi\in\Z^F\cap[0,k]^F$. So
all that remains to show is that $Q^\circ(b)\neq\emptyset$.  By
Theorem~\ref{integral-tension-reciprocity}, there exists a nowhere-zero
real tension $v$ with $\sgn(v)=\ori$; we may choose $v$ to have
arbitrarily small magnitude.  
We claim that $\hat{\psi}=\psi-v\in
Q^\circ(b)$.  Indeed, first, we have
$[\SS]\hat{\psi} = [\SS]\psi - [\SS] v = kb - 0  = kb$.
Second, if $f\not\in\zero(\bar{\psi})$ then $0<\hat{\psi}_f <k$.
If $\psi_f = 0$ then $\sgn(v_f)=-1$ and  $\hat{\psi}_f > 0$.
while if $\psi_f = k$ then $\sgn(v_f)=1$ and $\hat{\psi}_f < k$.
This proves the claim and completes the proof that $h$ is surjective.
\end{proof}

\begin{corollary} \label{acyclic-orientations-from-tensions}
If $X$ has no loops, then $|\tau^*_X(-1)|$
is the number of acyclic orientations of $X$.
\end{corollary}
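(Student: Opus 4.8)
The plan is to specialize Theorem~\ref{modular-tension-reciprocity} to the single value $k=1$. Since $\Zz_1$ is the trivial group, the only $\Zz_1$-tension on $X$ is the zero vector $\bar\psi=0$. First I would record that for $\bar\psi=0$ we have $\zero(\bar\psi)=F$, so the accompanying sign function $\sigma\colon\zero(\bar\psi)\to\{-1,1\}$ is defined on all of $F$; in other words, $\sigma$ is itself an orientation of $X$.

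Next I would unwind the extension condition in the definition \eqref{tension-pair}. When $\sigma$ is already a full orientation, requiring that it \emph{extend} to an acyclic orientation is vacuous: the only extension of $\sigma$ to $F$ is $\sigma$ itself, so the condition holds precisely when $\sigma$ is acyclic. Hence the pairs $(\bar\psi,\sigma)\in\Psi_1(X)$ are in bijection with the acyclic orientations of $X$, and $|\Psi_1(X)|$ equals their number.

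Finally, setting $k=1$ in the conclusion of Theorem~\ref{modular-tension-reciprocity} gives $(-1)^{\rho}\tau^*_X(-1)=|\Psi_1(X)|$. Since the right-hand side is a count, hence a nonnegative integer, taking absolute values yields $|\tau^*_X(-1)|=|\Psi_1(X)|$, which by the previous step is the number of acyclic orientations of $X$.

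I do not anticipate any substantive obstacle: the corollary is a clean $k=1$ specialization, exactly parallel to the flow corollary following Theorem~\ref{modular-flow-reciprocity-theorem}. The only point meriting a word of care is the bookkeeping observation that the clause ``$\sigma$ extends to an acyclic orientation'' collapses to ``$\sigma$ is an acyclic orientation'' once $\zero(\bar\psi)=F$, which is immediate from the hypothesis $k=1$.
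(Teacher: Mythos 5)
Your proposal is correct and matches the paper's own proof: both set $k=1$ in the definition \eqref{tension-pair}, observe that the only $\Zz_1$-tension is $\bar\psi=0$ so that $\zero(\bar\psi)=F$, and conclude that $\sigma$ is itself an acyclic orientation, whence $|\Psi_1(X)|$ counts acyclic orientations and Theorem~\ref{modular-tension-reciprocity} gives the result. No issues.
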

\begin{proof}
If we set $k=1$ in the definition \eqref{tension-pair}, we see that
the vector $\bar{\psi}$ must be the zero vector, and $\sigma=\ori$ is
an acyclic orientation of~$X$.
\end{proof}

\begin{corollary}
\label{modular-tension-reciprocity-for-shrinkable-complexes}
Let $X$ be a totally unimodular complex without loops. Then $|\tau^*_X(-k)|$ counts the number of pairs $(\bar{w},\sigma)$ where $\bar{w}$ is a $\Z_k$-tension on $X$ and $\sigma$ is an acyclic orientation on $X\setminus\supp(\bar{w})$.
\end{corollary}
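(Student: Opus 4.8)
The plan is to deduce the corollary from the modular tension reciprocity theorem (Theorem~\ref{modular-tension-reciprocity}), exactly as Corollary~\ref{modular-flow-reciprocity-for-shrinkable-complexes} was deduced from Theorem~\ref{modular-flow-reciprocity-theorem}. Writing $\bar w=\bar\psi$ for the $\Z_k$-tension and $S=\supp(\bar\psi)$, the set $\zero(\bar\psi)=F\sm S$ is precisely the facet set of the deletion $X\sm S$, so a function $\sigma\colon\zero(\bar\psi)\to\{-1,1\}$ is the same thing as an orientation of $X\sm S$ (which is again loop-free, since deletion creates no zero columns). In view of the description of $\Psi_k(X)$ in~\eqref{tension-pair}, it therefore suffices to prove that for every $\Z_k$-tension $\bar\psi$ and every $\sigma\colon\zero(\bar\psi)\to\{-1,1\}$ the following are equivalent: (1) $\sigma$ extends to an acyclic orientation of $X$; and (2) $\sigma$ is an acyclic orientation of $X\sm S$. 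The whole argument is dual to the flow case, with the contraction $X/\supp$ replaced by the deletion $X\sm S$. The one structural fact I will use repeatedly is that restriction of coordinates carries $\Ten_\R(X)$ onto $\Ten_\R(X\sm S)$: since $\Ten_\R=\Cut_\R=\rowspace_\R\bd$, dropping the columns indexed by $S$ sends a cut $c\,\bd$ of $X$ to a cut of $X\sm S$, and this map is surjective, so every tension of $X\sm S$ lifts to one of $X$.

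For (1)$\Rightarrow$(2) I would take an acyclic orientation $\ori$ of $X$ with $\ori|_{\zero(\bar\psi)}=\sigma$; by Proposition~\ref{bijectiontensionregions} its region contains an $\ori$-positive nowhere-zero tension $v\in\Ten_\R(X)$. Restricting $v$ to $F\sm S$ yields a nowhere-zero tension on $X\sm S$ that is $\sigma$-positive, so $\sigma$ is acyclic by the same proposition applied to $X\sm S$. Note that, unlike the flow case, this direction needs neither total unimodularity nor the iterated shrinking of Definition~\ref{def:ISH}, because deletion of an arbitrary facet set is always defined and the restriction of a tension is automatically a tension. For (2)$\Rightarrow$(1) I would start from a $\sigma$-positive nowhere-zero tension $v'$ on $X\sm S$, lift it to some $v\in\Ten_\R(X)$ with $v|_{F\sm S}=v'$, and then perturb: if $\eta\in\Ten_\R(X)$ is a tension that vanishes on $\zero(\bar\psi)$ and is nonzero at every facet of $S$, then $v+\lambda\eta$ agrees with $v'$ on $F\sm S$ and, for generic $\lambda\in\R$, is nowhere zero; its sign vector is then an acyclic orientation of $X$ extending $\sigma$, once more by Proposition~\ref{bijectiontensionregions}.

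The crux---and the place where total unimodularity is genuinely used---is producing this auxiliary tension $\eta$ supported exactly on $S$, dualizing the flow built in the proof of Corollary~\ref{modular-flow-reciprocity-for-shrinkable-complexes}. Since $\bar\psi$ is a $\Z_k$-tension, it is a $\Z_k$-cut (Proposition~\ref{cuts-equal-tensions-mod-n}), so it lifts to an integral cut $\psi^{(0)}=c\,\bd\in\Cut_\Z(X)=\Ten_\Z(X)$ with $\psi^{(0)}\equiv\bar\psi\pmod k$; here I use that $X$, being TU, is torsion-free over $\Z$, so that cuts and tensions coincide integrally. On $\zero(\bar\psi)$ the entries of $\psi^{(0)}$ are divisible by $k$, and $\tfrac1k\psi^{(0)}$ is a rational tension taking the integer values $t_f=\psi^{(0)}_f/k$ there. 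By Lemma~\ref{integer-solution}, applied to the totally unimodular system cutting out integral tensions with prescribed values on $\zero(\bar\psi)$, there is an integral tension $z\in\Ten_\Z(X)$ with $z|_{\zero(\bar\psi)}=t$. Setting $\eta=\psi^{(0)}-k z$ gives a tension that vanishes on $\zero(\bar\psi)$ by the choice of $z$, while $\eta\equiv\bar\psi\pmod k$ forces $\eta_f\neq0$ for every $f\in S$ because $\bar\psi_f\neq0$ in $\Z_k$ there. This is exactly the support-tension needed above, completing (2)$\Rightarrow$(1). I expect the main obstacle to be the clean set-up of Lemma~\ref{integer-solution}: one must exhibit a TU coefficient matrix---e.g.\ a TU $\Z$-basis of $\ker\bd$ as in Lemma~\ref{tu-basis-of-kernel}, augmented by the unit rows selecting the coordinates in $\zero(\bar\psi)$---together with the rational solution $\tfrac1k\psi^{(0)}$, after which the integrality of $z$ is automatic.
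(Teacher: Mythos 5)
Your proposal is correct and follows essentially the same route as the paper's proof: reduce to the equivalence of ``$\sigma$ extends to an acyclic orientation of $X$'' with ``$\sigma$ is an acyclic orientation of $X\setminus\supp(\bar{w})$,'' handle both implications via Proposition~\ref{bijectiontensionregions} together with restriction/lifting of tensions under deletion, and construct the auxiliary tension supported exactly on $\supp(\bar{w})$ using Lemmas~\ref{tu-basis-of-kernel} and~\ref{integer-solution}. The only cosmetic differences are that you lift $\bar{\psi}$ to an integral cut $c\,\bd$ rather than taking the coordinatewise canonical lift before solving the TU system, and you lift the tension from $X\setminus\supp(\bar{w})$ in one step via surjectivity of the restriction map rather than facet by facet.
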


\begin{proof} To derive the corollary from Theorem~\ref{modular-tension-reciprocity}, all we have to show is that for any $\Z_k$-tension $\bar{w}$ and any $\sigma:\zero(\bar{w})\rightarrow\{-1,1\}$ the following statements are equivalent:
\begin{enumerate}[(1)]
\item $\sigma$ extends to an acyclic orientation of $X$. \label{enum:delete-support-1}
\item $\sigma$ is an acyclic orientation of $X\setminus\supp(\bar{w})$. \label{enum:delete-support-2}
\end{enumerate}

Suppose that $\supp(\bar w)=\{f_1,\dots,f_\ell\}$.
Define $X_{i}:=X\setminus\{f_1,\dots,f_{i}\}$ for $i=0,\ldots,\ell$. Thus $X_0=X$ and $X_\ell = X\setminus\supp(\bar{w})$.

We will need the following two observations:
\begin{enumerate}[(a)]
\item Let $Y$ be a cell complex and let $f$ be a facet of $Y$.  If $\psi$ is an $\R$-tension on $Y$, then $\psi|_{Y\setminus f}$ is an $\R$-tension on $Y\setminus f$. Conversely, if $\psi'$ is an $\R$-tension on $Y\setminus f$, then there exists an $\R$-tension $\psi$ on $Y$ such that $\psi|_{Y\setminus f}=\psi'$.  This is easy to derive from the fact that $\psi$ is an $\R$-tension on $Y$ if and only if there exists $c\in \R^R$ such that $\psi=c\partial_Y$; see also Proposition~\ref{coloring-cut}. \label{enum:deletion}
\item If $X$ is TU, then $[\SS]$ can be chosen to be totally unimodular, by Lemma~\ref{tu-basis-of-kernel}.
\label{enum:tu}
\end{enumerate}

Now we can turn to the proof proper.

\eqref{enum:delete-support-1} \emph{implies} \eqref{enum:delete-support-2}: Let $\ori$ be an acyclic orientation of $X$ such that $\ori|_{\zero(\bar{w})}=\sigma$. Then by Proposition~\ref{bijectiontensionregions} there exists an $\ori$-positive $\R$-tension $\psi$ on $X$. We claim that $\psi|_{\zero(\bar{w})}$ is a $\sigma$-positive tension on $X\setminus\supp(\bar{w})$.  Indeed, $\psi|_{\zero(\bar{w})}$ is $\sigma$-positive, and by observation~\eqref{enum:deletion}, if $\psi$ restricts to an $\R$-tension on $X_{i-1}$ then it restricts to an $\R$-tension on $X_i$. By induction $\psi|_{\zero(\bar{w})}$ is an $\R$-tension on $X_\ell=X\setminus\supp(\bar{w})$. Thus, by Proposition~\ref{bijectiontensionregions}, we conclude that $\sigma$ is an acyclic orientation on $X\setminus\supp(\bar{w})$.

\eqref{enum:delete-support-2} \emph{implies} \eqref{enum:delete-support-1}: Let $\sigma$ be an acyclic orientation of $X\setminus\supp(\bar{w})$. Then by \ref{bijectiontensionregions} there exists a $\sigma$-positive $\R$-tension $\psi_\ell$ on $X_\ell=X\setminus\supp(\bar{w})$. By observation~\eqref{enum:deletion}, we can inductively construct $\R$-tensions $\psi_\ell,\psi_{\ell-1},\psi_0=\psi$ on $X_\ell,\ldots,X_0=X$ such that $\sgn(\psi_i|_{\zero(\bar{w})}) = \sigma$.  Note that these tensions need not be nowhere-zero, although $\psi$ is nonzero on $\zero(\bar{w})$.

Next, we construct an $\R$-tension $w$ on $X$ such that $w_f=0$ if and only if $f\in\zero(\bar{w})$. Such a $w$ can be obtained as follows. First, we interpret
$\bar{w}$ as a vector in $\Z^F$ by lifting all entries to their canonical representatives in $\Z$. Then $[\SS]\bar{w}=kb$ for some $b\in\Z^R$. Now we choose a
lattice point $z\in\Z^F$ such that $z|_{\zero(\bar{w})} = 0$ and $[\SS] z = b$. Such a lattice point exists, because $\frac{1}{k}\bar{w}$ is a rational solution to
this linear system.  Since $[\SS]$ is totally unimodular by observation~\eqref{enum:tu}, this system also has an integral solution by Lemma~\ref{integer-solution}.t Finally, we let $w = \bar{w} - kz$ whence $\bd w = 0$. Note that $w_f\not=0$ whenever $f\in\supp(\bar{w})$ because in this case $\bar{w}_f$ is not a multiple of $k$.

Now, $\psi+\lambda w$ is an $\R$-tension on $X$ for any $\lambda\in\R$. Because $w_f=0$ for all $f\in\zero(\bar{w})$, we can choose $\lambda$ such that $\psi+\lambda w$ is nowhere zero and has the same sign as $\psi$ on all facets $f\in\zero(\bar{w})$. Let $\ori$ be the acyclic orientation compatible with $\psi + \lambda w$, which exists by \ref{bijectiontensionregions}. We then have $\ori|_{\zero(\bar{w})}=\sigma$ as desired.
\end{proof}

\subsection{Modular Coloring Reciprocity} \label{subsec:mod-col-rec}

In analogy to the definition of the set $\Psi_k(X)$ in~\eqref{tension-pair}, let
\begin{equation} \label{coloring-pair}
C_k(X) := \left\{ (c,\sigma) \st
\begin{array}{l}
\text{$c$ is a $\Z_k$-coloring on $X$ and}\\
\text{$\sigma:\zero(c\bd)\rightarrow\{-1,1\}$ extends}\\
\text{to an acyclic orientation of~$X$}
\end{array}
\right\}.
\end{equation}

\begin{theorem}
\label{modular-coloring-reciprocity}
Suppose $X$ has no loops. Then 
$$(-1)^n\chi^*_X(-k)=|C_k(X)| \, .$$
Moreover, $|\chi^*_X(-1)|$ equals the number of acyclic orientations of $X$. 
\end{theorem}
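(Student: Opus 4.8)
The plan is to mirror the structure of the modular tension reciprocity proof (Theorem~\ref{modular-tension-reciprocity}), exploiting the close relationship between colorings and tensions encoded in Proposition~\ref{coloring-cut}. Recall that the map $c\mapsto c\,\bd$ sends colorings surjectively onto cuts (equivalently tensions), and that $c$ is proper exactly when $c\,\bd$ is nowhere zero. I would first set up the geometry directly: a $k$-coloring is a lattice point in the inside-out polytope $C=(-1,1)^R\sm\HH_X$, and by \eqref{coloring-Ehrhart} we already have $(-1)^n\chi^*_X(-k)=\bar E_C(-k)$-type reciprocity in the \emph{integral} case. For the \emph{modular} case, however, the natural move is to realize $\chi^*_X(k)$ as an Ehrhart function of a disjoint union of polytopes in the coloring space $\R^R$, exactly as $\tau^*_X(k)$ was realized in the tension space $\R^F$.

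The key steps, in order, would be as follows. First, define for each $b\in\Z^R$ the polytope of colorings $c\in[0,1]^R$ whose image $c\,\bd$ lies in a prescribed residue class, declare $b$ feasible when the open version is nonempty, and assemble the disjoint union so that $\chi^*_X(k)=E(k)$ for the associated open region; this is the coloring analogue of the $Q_\SS(b)$ construction. Second, apply Ehrhart--Macdonald reciprocity to pass from the open to the closed Ehrhart quasipolynomial, obtaining $(-1)^n\chi^*_X(-k)=|\Z^R\cap k(\text{closed region})|$. Third, build the bijection $h$ between these boundary lattice points and the pairs $(c,\sigma)\in C_k(X)$: given a lattice point $\psi$ on the closed polytope, set $\bar c=\psi\bmod k$ and read off $\sigma$ from whether each boundary coordinate is $0$ or $k$, precisely as in the tension proof. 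Fourth, verify well-definedness by producing, via a generic nowhere-zero real tension of arbitrarily small magnitude and Theorem~\ref{integral-coloring-reciprocity} (or Proposition~\ref{acyclic-orientations}), an acyclic orientation $\ori$ restricting to $\sigma$ on $\zero(c\,\bd)$; injectivity is immediate, and surjectivity follows by lifting $\bar c$ and perturbing by a small acyclic-compatible tension. Finally, the second assertion follows by setting $k=1$: the only $1$-coloring forces $\bar c=0$, so $\sigma=\ori$ ranges over all acyclic orientations, giving $|\chi^*_X(-1)|$ as their count.

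The main obstacle I anticipate is the interface between colorings and tensions. In the tension proof one works directly in $\R^F$ where the Boolean arrangement $\BB$ and the reciprocity of Theorem~\ref{integral-tension-reciprocity} live naturally; but a coloring lives in $\R^R$, and the relevant nowhere-zero condition is on $c\,\bd$, not on $c$ itself. So the perturbation argument must be carried out one level removed: rather than perturbing $c$ directly, I would perturb the associated tension $\psi=c\,\bd$ by a small nowhere-zero real tension and then pull this back through the surjection of Proposition~\ref{coloring-cut} to a perturbation of $c$. Controlling this pullback—ensuring the perturbed coloring still lies in the correct (half-)open region while its image achieves the desired signs on the relevant facets—is the delicate point. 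An alternative that sidesteps much of this is to simply reduce to the already-proven tension case: the map $c\mapsto\psi=c\,\bd$ is a $\Z$-linear surjection with kernel of size $k^{n-\rho}/\gamma$-many colorings per tension (by Proposition~\ref{modular-cut-theorem}), so one could deduce $|C_k(X)|=k^{?}\,|\Psi_k(X)|$ and combine with Proposition~\ref{modular-cut-theorem} to transfer Theorem~\ref{modular-tension-reciprocity}.

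Given the parenthetical remark in the tension proof that ``the argument is the same,'' I expect the cleanest writeup to follow the reduction route: establish that $(c,\sigma)\mapsto(c\,\bd,\sigma)$ carries $C_k(X)$ onto $\Psi_k(X)$ with constant fiber size equal to $|\ker_{\Z_k}\cbd|=k^{n-\rho}\gamma(X,k)$, observe that the signs $\sigma$ depend only on the image tension since $\zero(c\,\bd)$ is determined by $\psi$, and then invoke Proposition~\ref{modular-cut-theorem} together with Theorem~\ref{modular-tension-reciprocity} to conclude $(-1)^n\chi^*_X(-k)=k^{n-\rho}\gamma(X,k)\cdot(-1)^\rho\tau^*_X(-k)=|C_k(X)|$. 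This keeps the only genuinely new content—the fiber-size bookkeeping and the sign-consistency of $\sigma$ across fibers—isolated and routine, while deferring the hard geometric perturbation to the tension theorem already in hand.
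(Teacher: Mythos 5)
Your proposal is correct and, in the route you ultimately settle on, is essentially identical to the paper's proof: the paper defines $f(c,\sigma)=(c\,\bd,\sigma)$, notes surjectivity onto $\Psi_k(X)$ via Proposition~\ref{cuts-equal-tensions-mod-n}, computes the constant fiber size $|\ker_{\Zz_k}\cbd|=k^{n-\rho}\gamma(\bd,k)$, and concludes by combining Theorem~\ref{modular-tension-reciprocity} with Proposition~\ref{modular-cut-theorem}, exactly as in your final paragraph. The direct geometric construction you sketch first is unnecessary, and your instinct to defer the perturbation argument to the already-proven tension theorem is precisely what the paper does.
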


\begin{proof} Consider the map $f:C_k(X)\to\Psi_k(X)$
defined by $f(c,\sigma)=(c\bd, \sigma)$.
This map is surjective because every tension is a cut over $\Zz_k$ (Proposition~\ref{cuts-equal-tensions-mod-n})
and the cut space is by definition the rowspace of $\bd$.  By \eqref{count-kernel}, for each $\psi\in\Ten_k(X)$ we have
\[|f^{-1}(\psi)| = |\ker_{\Zz_k}(\cbd)| = k^{n-\rho} \gamma(\bd,k) \, .\]
Therefore,
\begin{align*}
|C_k(X)| &= k^{n-\rho} \gamma(\bd,k) |\Psi_k(X)|\\
&= (-1)^\rho k^{n-\rho} \gamma(\bd,k) \tau^*_X(-k) && \text{(by Thm.~\ref{modular-tension-reciprocity})}\\
&= (-1)^n \chi^*_X(-k) && \text{(by Thm.~\ref{modular-cut-theorem})}.
\end{align*}
The second assertion now follows from Corollary~\ref{acyclic-orientations-from-tensions}.
\end{proof}

\section{Further Directions}
We conclude with some possibilities for future research.

\begin{enumerate}
\item We do not know whether there exists a non-SQU cell complex~$X$ whose modular chromatic function
$\chi_X^*(k)$ is an honest polynomial (that is, a quasipolynomial
with period~1).  This would be an instance of ``period collapse'';
see, e.g., \cite{Period}.  For this
to occur, all non-monomial
terms in Theorem~\ref{coloring-inc-exc} would have to cancel out.

\item For a graph $G$, the modular flow polynomial $\varphi^*_G(k)$ counts the
  number of flows not merely over $\Zz_k$, but over any Abelian group
  $A$ of cardinality~$k$.  This is easily seen to be false for general
  cell complexes; the matrix $\bd=[2]$ has two flows over $\Zz_4$
  but four over $\Zz_2\oplus\Zz_2$.  What if we impose a unimodularity
  condition on $X$?

\item Kook, Reiner and Stanton~\cite{KRS} gave a formula for the Tutte
  polynomial of a matroid as a convolution of tension and flow
  polynomials.  Breuer and Sanyal~\cite[\S5]{breuersanyal} used the
  Kook--Reiner--Stanton formula, together with reciprocity results
  to give a general combinatorial interpretation of the values of
  the Tutte polynomial of a graph~$G$ at positive integers; see also
  \cite{Interp} and \cite[Theorem~3.11.7]{B2009}.
  Do these results generalize to cell complexes whose tension
  and flow functions are polynomials?
  
\item In the graph case, the geometric setup has proven extremely useful for establishing bounds on the coefficients of the
chromatic polynomial \cite{hershswartz} and the tension and flow polynomials, in particular in the modular case
\cite{BDall2011}. Moreover these geometric constructions are closely related to Steingr\'imsson's coloring
complex~\cite{BDall2010,steingrimsson}. Can these methods be extended to the case of counting quasipolynomials defined in terms of cell complexes? In particular, what are good bounds on the coefficients of the quasipolynomials introduced in this paper?

\end{enumerate}

\bibliographystyle{amsplain}
\bibliography{biblio}

\providecommand{\bysame}{\leavevmode\hbox to3em{\hrulefill}\thinspace}
\providecommand{\MR}{\relax\ifhmode\unskip\space\fi MR }
% \MRhref is called by the amsart/book/proc definition of \MR.
\providecommand{\MRhref}[2]{%
  \href{http://www.ams.org/mathscinet-getitem?mr=#1}{#2}
}
\providecommand{\href}[2]{#2}
\begin{thebibliography}{10}

\bibitem{BDN}
Roland Bacher, Pierre de~la Harpe, and Tatiana Nagnibeda, \emph{The lattice of
  integral flows and the lattice of integral cuts on a finite graph}, Bull.
  Soc. Math. France \textbf{125} (1997), no.~2, 167--198.

\bibitem{yvonne}
Matthias Beck and Yvonne Kemper, \emph{Flows on simplicial complexes}, Discrete
  Math. Theoret. Comp. Sc. \textbf{Proc. AR} (2012), 817--826.

\bibitem{br}
Matthias Beck and Sinai Robins, \emph{Computing the continuous discretely:
  Integer-point enumeration in polyhedra}, Undergraduate Texts in Mathematics,
  Springer, New York, 2007.

\bibitem{inout}
Matthias Beck and Thomas Zaslavsky, \emph{Inside-out polytopes}, Adv. Math.
  \textbf{205} (2006), no.~1, 134--162.

\bibitem{MR2274083}
\bysame, \emph{The number of nowhere-zero flows on graphs and signed graphs},
  J. Combin. Theory Ser. B \textbf{96} (2006), no.~6, 901--918.

\bibitem{B2009}
Felix Breuer, \emph{Ham sandwiches, staircases and counting polynomials}, Ph.D.
  thesis, Freie Universit\"at Berlin, 2009.

\bibitem{BDall2010}
Felix Breuer and Aaron Dall, \emph{Viewing counting polynomials as {H}ilbert
  functions via {E}hrhart theory}, 22nd International Conference on Formal
  Power Series and Algebraic Combinatorics (FPSAC 2010), 2010, pp.~413--424.

\bibitem{BDall2011}
\bysame, \emph{Bounds on the coefficients of tension and flow polynomials}, J.
  Algebraic Combin. \textbf{33} (2011), no.~3, 465--482.

\bibitem{breuersanyal}
Felix Breuer and Raman Sanyal, \emph{Ehrhart theory, modular flow reciprocity,
  and the {T}utte polynomial}, Math. Z. \textbf{270} (2012), no.~1-2, 1--18.

\bibitem{BryOx}
Thomas Brylawski and James Oxley, \emph{The {T}utte polynomial and its
  applications}, Matroid applications, Encyclopedia Math. Appl., vol.~40,
  Cambridge Univ. Press, Cambridge, 1992, pp.~123--225.

\bibitem{Chen2010}
Beifang Chen, \emph{Orientations, lattice polytopes, and group arrangements
  {I}: Chromatic and tension polynomials of graphs}, Ann. Comb. \textbf{13}
  (2010), no.~4, 425--452.

\bibitem{chenstanley}
Beifang Chen and Richard~P. Stanley, \emph{Orientations, lattice polytopes, and
  group arrangements {II}: Modular and integral flow polynomials of graphs},
  Graphs Combin. \textbf{28} (2012), no.~6, 751--779.

\bibitem{CombOpt}
William~J. Cook, William~H. Cunningham, William~R. Pulleyblank, and Alexander
  Schrijver, \emph{Combinatorial optimization}, Wiley-Interscience Series in
  Discrete Mathematics and Optimization, John Wiley \& Sons Inc., New York,
  1998.

\bibitem{arith-matroid}
Michele D'Adderio and Luca Moci, \emph{Arithmetic matroids, the {T}utte
  polynomial and toric arrangements}, Adv. Math. \textbf{232} (2013), no.~1,
  335--367.

\bibitem{arith-colorings}
\bysame, \emph{Graph colorings, flows and arithmetic {T}utte polynomial}, J.
  Combin. Theory Ser. A \textbf{120} (2013), no.~1, 11--27.

\bibitem{Dall}
Aaron Dall, \emph{The flow and tension complexes}, Master's thesis, San
  Francisco State University, 2008.

\bibitem{OTL}
Tamal~K. Dey, Anil~N. Hirani, and Bala Krishnamoorthy, \emph{Optimal homologous
  cycles, total unimodularity, and linear programming}, SIAM J. Comput.
  \textbf{40} (2011), no.~4, 1026--1044.

\bibitem{DKM-CutFlow}
Art~M. Duval, Caroline~J. Klivans, and Jeremy~L. Martin, \emph{Cuts and flows
  of cell complexes}, Preprint,
  \href{http://arxiv.org/abs/1206.6157}{arXiv:1206.6157}, 2012.

\bibitem{ehrhartpolynomial}
Eug{\`e}ne Ehrhart, \emph{Sur les poly\`edres rationnels homoth\'etiques \`a
  {$n$}\ dimensions}, C. R. Acad. Sci. Paris \textbf{254} (1962), 616--618.

\bibitem{MatRing}
Alex Fink and Luca Moci, \emph{Matroids over a ring}, preprint,
  \href{http://arxiv.org/abs/1209.6571}{arXiv:1209.6571}, 2012.

\bibitem{AlgGraph}
Chris Godsil and Gordon Royle, \emph{Algebraic graph theory}, Graduate Texts in
  Mathematics, vol. 207, Springer-Verlag, New York, 2001.

\bibitem{Greene}
Curtis Greene, \emph{Acyclic orientations (notes)}, Higher Combinatorics
  (Martin Aigner, ed.), D. Reidel, 1977, pp.~65--68.

\bibitem{Hatcher}
Allen Hatcher, \emph{Algebraic topology}, Cambridge University Press,
  Cambridge, 2002.

\bibitem{hershswartz}
Patricia Hersh and Ed~Swartz, \emph{Coloring complexes and arrangements}, J.
  Algebraic Combin. \textbf{27} (2007), no.~2, 205--214.

\bibitem{Kochol}
Martin Kochol, \emph{Polynomials associated with nowhere-zero flows}, J.
  Combin. Theory Ser. B \textbf{84} (2002), no.~2, 260--269.

\bibitem{KocholTension}
\bysame, \emph{Tension polynomials of graphs}, J. Graph Theory \textbf{40}
  (2002), 137--146.

\bibitem{KRS}
Woong Kook, Victor Reiner, and Dennis Stanton, \emph{A convolution formula for
  the {T}utte polynomial}, J. Combin. Theory Ser. B \textbf{76} (1999), no.~2,
  297--300.

\bibitem{macdonald}
Ian~G. Macdonald, \emph{Polynomials associated with finite cell-complexes}, J.
  London Math. Soc. (2) \textbf{4} (1971), 181--192.

\bibitem{Period}
Tyrrell~B. McAllister and Kevin~M. Woods, \emph{The minimum period of the
  {E}hrhart quasi-polynomial of a rational polytope}, J. Combin. Theory Ser. A
  \textbf{109} (2005), no.~2, 345--352.

\bibitem{Oxley}
James Oxley, \emph{Matroid theory}, second ed., Oxford Graduate Texts in
  Mathematics, vol.~21, Oxford University Press, Oxford, 2011.

\bibitem{Interp}
Victor Reiner, \emph{An interpretation for the {T}utte polynomial}, European J.
  Combin. \textbf{20} (1999), no.~2, 149--161.

\bibitem{Schrijver}
Alexander Schrijver, \emph{Theory of linear and integer programming}, John
  Wiley \& Sons, Inc., New York, NY, USA, 1986.

\bibitem{Stanley}
Richard~P. Stanley, \emph{Acyclic orientations of graphs}, Discrete Math.
  \textbf{5} (1973), 171--178.

\bibitem{EC2v2}
\bysame, \emph{Enumerative combinatorics. {V}olume 1}, second ed., Cambridge
  Studies in Advanced Mathematics, vol.~49, Cambridge University Press,
  Cambridge, 2012.

\bibitem{steingrimsson}
Einar Steingr\'imsson, \emph{The coloring ideal and coloring complex of a
  graph}, J. Algebraic Combin. \textbf{14} (2001), no.~1, 73--84.

\bibitem{Tutte}
William~T. Tutte, \emph{A contribution to the theory of chromatic polynomials},
  Canadian J. Math. \textbf{6} (1954), 80--91.

\bibitem{West}
Douglas~B. West, \emph{Introduction to graph theory}, Prentice Hall Inc., Upper
  Saddle River, NJ, 1996.

\bibitem{Zaslavsky}
Thomas Zaslavsky, \emph{Facing up to arrangements: {F}ace-count formulas for
  partitions of space by hyperplanes}, Mem. Amer. Math. Soc. \textbf{1} (1975),
  no.~154.

\end{thebibliography}

\end{document}